\newcommand{\FFF}{\mathbb{F}}
\newcommand{\C}{\mathbb{C}}
\newcommand{\ZZ}{\mathbb{Z}}
\newcommand{\QQ}{\mathbb{Q}}
\newcommand{\PP}{\mathbb{P}}
\newcommand{\oo}{\mathfrak o}
\newcommand{\II}{\mathcal I}
\newcommand{\wt}{\widetilde}
\newcommand{\rom}{\romannumeral}
\renewcommand{\v}{\mathbf{v}}
\newcommand{\w}{\mathbf{w}}
\DeclareMathOperator{\CH}{CH}
\renewcommand{\1}{\mathds{1}}
\newcommand{\h}{\mathfrak{h}}
\DeclareMathOperator{\ide}{id}
\DeclareMathOperator{\ima}{Im}
\DeclareMathOperator{\Gr}{Gr}
\DeclareMathOperator{\Ku}{Ku}
\DeclareMathOperator{\HH}{H}
\newcommand{\cart}{\ar@{}[dr]|\square} 
\DeclareMathOperator{\CHM}{CHM}
\DeclareMathOperator{\Db}{D^b}
\DeclareMathOperator{\mar}{mar}
\DeclareMathOperator{\lab}{lab}
\DeclareMathOperator{\GDCH}{GDCH}
\newtheorem{theorem}{Theorem}[section]
\newtheorem{lemma}[theorem]{Lemma}
\newtheorem{corollary}[theorem]{Corollary}
\newtheorem{proposition}[theorem]{Proposition}
\newtheorem{conjecture}[theorem]{Conjecture}
\theoremstyle{definition}
\newtheorem{remark}[theorem]{Remark}
\newtheorem{definition}[theorem]{Definition}
\newtheorem{question}[theorem]{Question}
\newtheorem{example}[theorem]{Example}
\newif\ifHideFoot
\newcommand{\Lie}[1]{}
\newcommand{\Robert}[1]{}
\newcommand{\marg}[1]{\normalsize{{
			\color{red}\footnote{{\color{blue}#1}}}{\marginpar[\vskip
			-.25cm{\color{red}\hfill$\Rightarrow$\tiny\thefootnote}]{\vskip
				-.2cm{\color{red}$\Leftarrow$\tiny\thefootnote}}}}}
\newcommand{\Lie}[1]{\marg{(Lie) #1}}
\newcommand{\Robert}[1]{\marg{(Robert) #1}}
\title[Special cubic 4-folds, K3 surfaces and Franchetta property]{Special cubic fourfolds, K3 surfaces and the Franchetta property}
\author[Lie Fu]{Lie Fu}
\address{Institute for Mathematics, Astrophysics and Particle Physics (IMAPP), Radboud University, PO Box 9010, 6500 GL, Nijmegen, Netherlands.}
\email{lie.fu@math.ru.nl}
\author[Robert Laterveer]
{Robert Laterveer}
\address{Institut de Recherche Math\'ematique Avanc\'ee,
	CNRS -- Universit\'e 
	de Strasbourg,\
	7 Rue Ren\'e Des\-car\-tes, 67084 Strasbourg CEDEX,
	FRANCE.}
\email{robert.laterveer@math.unistra.fr}
\thanks{\textit{2020 Mathematics Subject Classification:}  14C15, 14C25, 14C30, 14J28, 14J70, 14J45}
\keywords{K3 surfaces, algebraic cycles, Chow groups, motive, cubic fourfolds, moduli spaces}
\thanks{L.F. and R.L. are supported by the Agence Nationale de la Recherche (ANR) under project number ANR-20-CE40-0023. L.F. was also supported by the  ANR project ANR-16-CE40-0011 and the Radboud Excellence Initiative program.}
\begin{document}

\begin{abstract} 
	O'Grady conjectured that the Chow group of 0-cycles of the generic fiber of the universal family over the moduli space of polarized K3 surfaces of genus $g$ is cyclic. This so-called generalized Franchetta conjecture has been solved only for low genera where there is a Mukai model (precisely, when $g\leq 10$ and $g=12, 13, 16, 18, 20$), by the work of Pavic--Shen--Yin. 
	In this paper, as a non-commutative analogue, we study the Franchetta property for families of special cubic fourfolds (in the sense of Hassett), and relate it
	to O'Grady's conjecture for K3 surfaces. 
	Most notably, by using special cubic fourfolds of discriminant 26, we prove O'Grady's generalized Franchetta conjecture for $g=14$, providing the first evidence beyond Mukai models.
 \end{abstract}

\maketitle

\tableofcontents

\section{Introduction}

For an integer $g\geq 2$, let $\mathcal{M}_g$ be the moduli stack of genus $g$ curves and $\pi\colon \mathcal{C}\to \mathcal{M}_g$ the universal curve. 
Franchetta conjectured in \cite{Franchetta54} that the Picard group of the generic fiber of $\pi$ is free cyclic and generated by the relative canonical bundle  $\omega_\pi$.
The conjecture can be equivalently formulated as follows: for any line bundle $L$ on $\mathcal{C}$, the restriction of $L$ to a fiber $C_t:=\pi^{-1}(t)$, for any $t\in \mathcal{M}_g$, is a power of the canonical bundle:
\[L|_{C_t}\simeq \omega_{C_t}^{\otimes m}, \text{ for some } m\in \ZZ.\] Franchetta's conjecture was proved  by Harer \cite{Harer83} (see also \cite{ArabarelloCornalba87} and \cite{Mestrano}).

\subsection{Franchetta for K3 surfaces}
In the end of \cite{OGradyK3}, O'Grady proposed an analogue of Franchetta's conjecture for K3 surfaces. In order to state his conjecture, let us first recall the following seminal result of Beauville and Voisin \cite{BVK3}. Let $\CH^\ast(-)$ denote the Chow ring.
\begin{theorem}[Beauville--Voisin]
	Let $S$ be a projective K3 surface. There exists a canonical 0-cycle $\oo_S\in \CH^2(S)$, defined as the class of any point lying on some rational curve in $S$, satisfying the following properties:
	\begin{enumerate}[(i)]
		\item $ \ima\left( \CH^1(S)\otimes \CH^1(S)\xrightarrow{\cdot} \CH^2(S)\right) \subset \ZZ \oo_S.$
		\item $c_2(T_S)=24\oo_S$ in $\CH^2(S)$.
	\end{enumerate}
\end{theorem}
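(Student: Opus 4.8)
The plan is to construct the distinguished class $\oo_S$ from rational curves and then realise each of the two cycles in question by an explicit $0$-cycle supported on rational curves.

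\textbf{Construction of $\oo_S$.} By the theorem of Bogomolov--Mumford, every projective K3 surface carries rational curves; more precisely, for an ample class $A$ the linear system $|A|$ contains an effective divisor $R_0$ which is a connected union of rational curves. Since the normalisation of each component is $\PP^1$, all points on a given component have a single class in $\CH^2(S)$, and by connectedness of $R_0$ all points of $R_0$ share one class, which I define to be $\oo_S$. To see that this is independent of the choice, observe that $[R_0]=A$ is ample, so it meets every effective curve; in particular any rational curve $R$ satisfies $R\cap R_0\neq\emptyset$, and a common point $p$ has class $\oo_S$ when computed on $R_0$ and class $[\text{pt of }R]$ when computed on $R$. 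Hence every point on every rational curve has class $\oo_S$, which is well defined. This step rests entirely on the nontrivial existence input of Bogomolov--Mumford.

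\textbf{Property (i).} Since the intersection product factors through $\Pic(S)=\CH^1(S)$ and is bilinear, it suffices to compute $[C]\cdot[C']$ for very ample classes, which span $\mathrm{NS}(S)\otimes\QQ$. For such a class $A$, the system $|A|$ is positive-dimensional, its general member is smooth and irreducible, and by Bogomolov--Mumford it also contains a connected rational-curve member $R_A$. Moving one factor within its own system, $[A]^2=[R_A]\cdot[R_A']$ with $R_A,R_A'\in|A|$ sharing no component, so the product is represented by a $0$-cycle supported on $R_A$ and therefore equals $(A^2)\oo_S$; the same moving argument gives $[A]\cdot[A']=(A\cdot A')\oo_S$ for two such classes. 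By bilinearity this yields $[C]\cdot[C']=(C\cdot C')\oo_S$ in $\CH^2(S)\otimes\QQ$. Finally $\CH^2(S)$ is torsion-free, since by Rojtman's theorem the torsion of $\CH^2(S)_{\mathrm{hom}}$ injects into $\alb(S)=0$; as $[C]\cdot[C']-(C\cdot C')\oo_S$ has degree $0$ and is rationally trivial, it vanishes integrally, proving (i).

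\textbf{Property (ii).} The degree is not at issue: Noether's formula gives $\degr\,c_2(T_S)=\chi_{\mathrm{top}}(S)=24$, so the point is that the class of $c_2(T_S)$ in $\CH^2(S)_{\mathrm{hom}}$ is zero. The model case is a genus-$1$ fibration $f\colon S\to\PP^1$ with $24$ nodal ($I_1$) fibres. A standard computation with the relative cotangent sheaf $\Omega_{S/\PP^1}$, whose torsion is concentrated at the nodes, localises $c_2(T_S)$ as the $0$-cycle $\sum_{i=1}^{24}[p_i]$, where $p_i$ is the node of the $i$-th singular fibre. Each $p_i$ lies on a nodal rational curve, so $[p_i]=\oo_S$ by the construction above, giving $c_2(T_S)=24\,\oo_S$.

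\textbf{Main obstacle.} The delicate point is that (ii) must hold for every projective K3, while a genus-$1$ pencil exists only when $\mathrm{NS}(S)$ represents $0$ (it fails, e.g., in Picard rank $1$ with no isotropic class). I would handle the general case either by a specialisation/deformation argument controlling the $\mathrm{hom}$-part of the canonical cycle $c_2(T_S)-24\oo_S$, or by replacing the fibration by a direct computation of the zero locus of a general section of a suitable twist $\Omega^1_S(D)$ constrained to rational curves; securing such a uniform rational-curve representative of $c_2$, together with the Bogomolov--Mumford existence underlying the definition of $\oo_S$, is the crux of the argument.
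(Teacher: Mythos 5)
First, a framing remark: the paper does not prove this statement --- it is quoted as background from Beauville--Voisin \cite{BVK3} --- so there is no internal proof to compare against, and I am judging your argument on its own terms. Your construction of $\oo_S$ and your proof of (i) are correct and are essentially the original arguments: Bogomolov--Mumford supplies a connected rational-curve member $R_0$ of an ample system, ampleness forces every rational curve to meet $R_0$ (whence well-definedness), and products of very ample classes are realised by effective $0$-cycles supported on such members. Your detour through $\QQ$-coefficients and Rojtman does yield the integral statement, although writing every divisor class as a difference of two very ample classes gives (i) integrally with no appeal to Rojtman at all.

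The genuine gap is in (ii), and you have correctly located it yourself. The localisation of $c_2(T_S)$ at the $24$ nodes of the singular fibres is valid only for an elliptic K3 with $24$ fibres of type $I_1$; an elliptic fibration requires an isotropic class in $\mathrm{NS}(S)$ and so is unavailable, for instance, for every K3 of Picard rank $1$. Neither of your proposed repairs closes this. The specialisation route fails for a structural reason: by (i) one has $24\,\oo_S=\frac{24}{2g-2}H^2$ in $\CH^2(S)_\QQ$, so $c_2(T_S)-24\,\oo_S$ is a generically defined, fibrewise homologically trivial cycle for the family $\mathcal{F}_{g,1}\to\mathcal{F}_g$; the locus of $b\in\mathcal{F}_g$ where such a cycle restricts to zero is merely a countable union of closed subsets, and knowing that this locus contains the elliptic locus (a countable union of divisors) does not force it to be everything. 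Upgrading that to the very general fibre is exactly a Franchetta-type statement --- the open problem this paper is about --- so the deformation argument is not available unconditionally. The second route fares no better: the zero locus of a general section of $\Omega^1_S\otimes\OO_S(D)$ computes $c_2(T_S)+(D^2)\oo_S$, but there is no reason that zero locus should be supported on, or rationally equivalent into, rational curves, and you give none. As written, your argument establishes (ii) only on the elliptic locus; the general case requires a further idea beyond localisation on a fibration, and this is in fact the substantive content of the Beauville--Voisin theorem.
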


We call the canonical 0-cycle $\oo_S$ the \textit{Beauville--Voisin class} of the K3 surface $S$. The existence of such a canonical class is remarkable, as Mumford proved in \cite{Mumford68} that $\CH^2(S)$ is infinite dimensional, in the sense that it cannot be parameterized by a scheme of finite type. The insight of O'Grady is that to generalize Franchetta's conjecture,  the Beauville--Voisin class for a K3 surface should play the role of the canonical class for a curve.

Now let us state O'Grady's conjecture in \cite[p. 717]{OGradyK3} precisely. Throughout the paper, for an integer $g\geq 2$, we denote by $\mathcal{F}_g$ the moduli stack of primitively polarized K3 surfaces of genus $g$, that is, a pair
$(S, H)$ of a K3 surface $S$ and a primitive ample line bundle $H$  on it with degree $(H^2)=2g-2$. Let $\pi\colon \mathcal{S}\to \mathcal{F}_g$ be the universal family ($\mathcal{S}$ is sometimes denoted by $\mathcal{F}_{g,1}$ in the literature). For any closed point $b\in \mathcal{F}_g$, we denote by $S_b$ the fiber of $\pi$ over $b$. Rational Chow groups of algebraic stacks are defined in \cite{Vistoli97}.

\begin{conjecture}[O'Grady]
	\label{conj:GFC}
For any $b\in \mathcal{F}_g$, the Gysin restriction of any cycle $z\in \CH^2(\mathcal{S})_\QQ$ to the fiber $S_b$ is a multiple of the Beauville--Voisin class, i.e.,
 \[\ima\left(\CH^2(\mathcal{S})_\QQ\to \CH^2(S_b)_\QQ\right)=\QQ\oo_{S_b}.\]
\end{conjecture}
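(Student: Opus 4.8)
The plan is to prove the conjecture one genus at a time, by exploiting an explicit geometric model of the generic polarised K3 surface of that genus. I would first record that the inclusion $\QQ\oo_{S_b}\subseteq\ima\left(\CH^2(\mathcal S)_\QQ\to\CH^2(S_b)_\QQ\right)$ is automatic: by part (ii) of the Beauville--Voisin theorem, the relative class $\tfrac{1}{24}c_2(T_\pi)\in\CH^2(\mathcal S)_\QQ$ restricts to $\oo_{S_b}$ for every $b$. The content is therefore the reverse inclusion, the \emph{Franchetta property}, and since the degree map pins down the coefficient it suffices to show that the restriction $z|_{S_\eta}$ of an arbitrary $z\in\CH^2(\mathcal S)_\QQ$ to the geometric generic fibre is proportional to $\oo_{S_\eta}$, the case of a general $b$ following by specialisation. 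For the genera admitting a Mukai model this is the Pavic--Shen--Yin mechanism: the generic fibre is the zero locus of a section of a homogeneous bundle on a fixed rational homogeneous variety $G$, so $\CH^\ast(\mathcal S)_\QQ$ is generated by the Chow ring of $G$ and by classes pulled back from the parameter space, which is an open subset of a projective space; part (i) of Beauville--Voisin then forces all such tautological classes to restrict to multiples of $\oo_{S_\eta}$.

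The new input is for $g=14$, where no Mukai model exists and I would instead pass to the non-commutative K3 surface attached to a cubic fourfold. By Hassett's theory the genus-$14$ polarised K3 surfaces, of degree $2g-2=26$, correspond through the period map to special cubic fourfolds of discriminant $26$, and for such a fourfold $X$ the Kuznetsov component $\Ku(X)$ is equivalent to $\Db(S)$ for the associated K3 surface $S$. The idea is to establish a Franchetta property directly for the universal family of discriminant-$26$ cubic fourfolds: with $V$ a $6$-dimensional vector space the cubics are parametrised by $\PP(\sym^3 V^\vee)$ and the special ones by the Hassett divisor $\CC_{26}$ inside it, over which $\CH^\ast$ of the universal cubic should be generated by the hyperplane class $h$ of $\PP(V)$ together with the class of the extra algebraic surface forcing the discriminant, and these generate only the expected cycles on a fibre. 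I would then transport the conclusion across the equivalence $\Ku(X)\simeq\Db(S)$, checking that the induced isomorphism on the relevant graded pieces of the Chow groups sends the distinguished generator on the cubic side to $\oo_S$.

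The main obstacle I anticipate is making this transfer \emph{motivic}, that is compatible with rational equivalence rather than merely with cohomology: a derived equivalence $\Ku(X)\simeq\Db(S)$ yields a Hodge isometry for free, but to control $\CH^2(S)_\QQ$ one needs the associated Fourier--Mukai kernel to act on Chow groups and to match $\oo_S$ with the $0$-cycle class singled out by the cubic, and identifying the image of $\oo_S$ requires a careful analysis of how that kernel acts on $\CH^\ast$. A secondary difficulty is the Franchetta property for the discriminant-$26$ family itself: one must show that imposing the lattice-polarisation condition creates no new uncontrolled cycles on the generic fibre, which means understanding $\CH^\ast$ of the universal cubic over the Noether--Lefschetz-type locus $\CC_{26}$ rather than over the full moduli space of smooth cubics, where the parameter space is no longer an open subset of a projective space.
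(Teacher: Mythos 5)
Your overall architecture coincides with the paper's: prove a Franchetta property for the universal family of discriminant-$26$ cubic fourfolds and transport it to $\mathcal{F}_{14}$ through the Kuznetsov component. But the two ``obstacles'' you flag at the end are exactly the two theorems the paper has to prove, and your proposal resolves neither, so as it stands there is a genuine gap. On the transfer: the paper makes the Chow-theoretic comparison via B\"ulles' theorem, taking $Z=v_3(\mathcal{E})$, the codimension-$3$ component of the Mukai vector of the Fourier--Mukai kernel, which induces an isomorphism $\CH_1(X)_{\hom}\simeq\CH_0(S)_{\hom}$; to make this \emph{generically defined} over the moduli space (which is what the Franchetta property requires) one needs a \emph{relative} kernel, obtained by realizing $\mathcal{S}$ as a relative moduli space of Bridgeland-stable objects in $\Ku(\mathcal{X}/\mathcal{F}_g^\circ)$ with a monodromy-invariant isotropic Mukai vector $\v$ (plus $\v'$ with $\v\cdot\v'=1$ for fineness and $\w$ for the polarization). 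Note also that your worry about identifying the image of $\oo_S$ is a red herring: once one has mutually inverse generically defined isomorphisms on the homologically trivial parts, Franchetta for one family is equivalent to Franchetta for the other, with no need to match distinguished $0$-cycles.

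The more serious gap is in your treatment of the Franchetta property for $\mathcal{C}_{26}$ itself. First, the Hassett divisor is a Noether--Lefschetz-type locus, not an open subset of a projective space, so the Pavic--Shen--Yin projective-bundle mechanism does not apply directly; the paper replaces $\mathcal{C}_{26}$ by the Farkas--Verra incidence variety of pairs $(R,X)$ with $R$ a $3$-nodal septic scroll contained in $X$, whose fibers over the space of scrolls are open subsets of $\PP^{12}$, and applies a projective-bundle argument \emph{with base locus} (this needs the fact, due to Russo--Staglian\`o, that such a scroll is cut out by cubics). Second, and crucially, the output of that argument is not $\QQ h^3$ but $\QQ h^3+\QQ\ell$, where $\ell$ is the class of a ruling of the scroll; your assertion that the extra surface class ``generates only the expected cycles'' silently assumes $\ell\in\QQ h^3$, which is the heart of the matter. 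The paper proves this by choosing the scroll over a point of a constant cycle curve in the associated K3 surface, transporting via $S^{[2]}\simeq F(X)$ to show that the corresponding $0$-cycle $L$ on the Fano variety of lines lies in $\mathsf{S}_2\CH_0(F(X))$, and invoking Voisin's theorem that this piece of the orbit filtration is $\QQ g^4$; pushing forward by the incidence correspondence then gives $\ell\in\QQ h^3$. Without an argument of this kind your proof stalls at a two-dimensional space of candidate generically defined classes on each fiber.
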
  
We will refer to this conjecture as the \textbf{generalized Franchetta conjecture}. Note that by the standard argument of ``spreading out'' (see for example \cite[Section 1.1.2]{VoisinChowBook}), it is equivalent to requiring the same property only for a \textit{very general} point $b$ in $\mathcal{F}_g$. 

Conjecture \ref{conj:GFC} is largely open at present. Let us first mention some closely related results:
\begin{itemize}
	\item Bergeron and Li  \cite[Theorem 1.2.1]{BergeronLi} established a cohomological version of the conjecture: for any $z\in \CH^2(\mathcal{S})_\QQ$, if it is cohomologically  trivial on each fiber of $\pi$, then its cohomology class $[z]$ vanishes on the preimage of a Zariski open subset of $\mathcal{F}_g$. 
	\item Beauville recently proved in \cite{Beauville-Franchetta} that for any $g$, there exists a hypersurface in $\mathcal{F}_g$ such that the restricted universal family satisfies the Franchetta property, in the sense of Definition \ref{def:Franchetta} below.
	\item In a series of joint work with Vial \cite{FLV-Franchetta1, FLV-MCK, FLV-Franchetta2}, we formulated and investigated the natural extension of Conjecture \ref{conj:GFC} for higher-dimensional hyper-K\"ahler varieties, which is proved most notably in the cases of Beauville--Donagi fourfolds \cite{BeauvilleDonagi} and Lehn--Lehn--Sorger--van Straten eightfolds \cite{LLSvS} associated with the universal family of cubic fourfolds. 
\end{itemize}  
As for Conjecture \ref{conj:GFC} itself, the only known result so far is the following:

\begin{theorem}[{Pavic--Shen--Yin \cite{PavicShenYin-FranchettaK3}}] 
	\label{thm:PavicShenYin}
	Conjecture \ref{conj:GFC} is true for $2\le g\le 10$ and for $g\in\{12, 13, 16,18,20 \}$. 
\end{theorem}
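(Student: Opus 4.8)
The plan is to exploit the \emph{Mukai models} that exist for precisely these genera. For each such $g$, a general primitively polarized K3 surface $(S,H)$ of genus $g$ is cut out as the smooth zero locus $Z(s)$ of a regular section $s$ of a globally generated equivariant vector bundle $\EE$ on a rational homogeneous variety $X=G/P$ (a projective space, a Grassmannian, an orthogonal or symplectic Grassmannian, or an exceptional homogeneous variety, in some cases intersected with a linear subspace, and for the very lowest genera via a finite cover such as a double plane), with $H$ the restriction of the ample generator of $\Pic(X)$. This realizes an open substack of $\mathcal{F}_g$ as a quotient stack $[U/\aut(X)]$, where $U\subset \PP H^0(X,\EE)$ is the open locus of sections whose zero locus is a smooth polarized K3 surface.

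First I would reduce to the tautological incidence family. Consider
\[
\II \;=\; \bigl\{(x,[s])\in X\times U : s(x)=0\bigr\},
\]
with its two projections $p\colon \II\to X$ and $q\colon \II\to U$, the fibers of $q$ being exactly the K3 surfaces $S_b=Z(s)$. Since the generic fiber of $\II\to U$ agrees with that of $\mathcal{S}\to\mathcal{F}_g$, the spreading-out principle recalled after Conjecture \ref{conj:GFC} shows that it suffices to prove the Franchetta property for $\II\to U$. The key structural observation is that $p$ exhibits $\II$ as a projective subbundle $\PP(\VV)\to X$, where $\VV=\ker\bigl(H^0(X,\EE)\otimes\OO_X\xrightarrow{\mathrm{ev}}\EE\bigr)$ is a vector bundle (the evaluation map being surjective because $\EE$ is globally generated). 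Hence $\CH^\ast(\II)_\QQ$ is a free $\CH^\ast(X)_\QQ$-module on the powers of the relative class $\xi=c_1(\OO_{\PP(\VV)}(1))$; and since $\OO_{\PP(\VV)}(1)$ is the restriction of the pullback of $\OO_{\PP H^0(X,\EE)}(1)$, the class $\xi$ vanishes on each $q$-fiber $S_b$. Consequently the image of $\CH^2(\II)_\QQ\to \CH^2(S_b)_\QQ$ coincides with the image of the restriction map $\CH^2(X)_\QQ\to\CH^2(S_b)_\QQ$ along $S_b\hookrightarrow X$.

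The problem thus reduces to a Franchetta statement for the ambient $X$: the restriction of every codimension-$2$ cycle class on $X$ to a general K3 surface $S=Z(s)\subset X$ should lie in $\QQ\oo_S$. In codimensions $0$ and $1$ this is immediate, the images being $\QQ$ and $\ZZ H$. In codimension $2$, the subspace of $\CH^2(X)_\QQ$ spanned by products of divisors restricts into $\ZZ\oo_S$ by part (i) of the Beauville--Voisin theorem. To control the remaining, indecomposable, classes I would use the normal bundle sequence $0\to T_S\to T_X|_S\to \EE|_S\to 0$ on $S$: comparing degree-$2$ parts of total Chern classes and using $c_1(T_S)=0$ gives the identity $c_2(T_X)|_S-c_2(\EE)|_S=c_2(T_S)$, which equals $24\,\oo_S$ by part (ii) of the Beauville--Voisin theorem. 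Both $c_2(T_X)$ and $c_2(\EE)$ are ambient equivariant classes, so this pins one further combination of restricted codimension-$2$ classes to $\QQ\oo_S$. Finally, the image contains $H^2=(2g-2)\,\oo_S\neq 0$, so once everything restricts into $\QQ\oo_S$ the image equals $\QQ\oo_S$ exactly.

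The hard part will be the remaining linear algebra in $\CH^2(X)_\QQ$, and this is where the genus-by-genus analysis enters. One must verify, for each homogeneous space $X$ occurring in the list $g\in\{2,\dots,10\}\cup\{12,13,16,18,20\}$, that $\CH^2(X)_\QQ$ is spanned by products of divisors together with $c_2(T_X)$, and that $c_2(\EE)$ is itself decomposable (or otherwise restricts into $\QQ\oo_S$). For the complete-intersection models, where $X$ is a projective space and $\EE$ a direct sum of line bundles, both conditions hold at once, since $c_2(\EE)$ and $c_2(T_X)$ are then multiples of $H^2$. For the Grassmannian and exceptional models one must instead identify the Mukai bundle $\EE$ explicitly and compute the Chern classes of $T_X$ and $\EE$ in terms of Schubert classes to confirm the spanning; this is the only genuinely case-dependent input. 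Granting it, every codimension-$2$ ambient class restricts into $\QQ\oo_S$, so the Franchetta property holds for $\II\to U$, and therefore Conjecture \ref{conj:GFC} holds for these values of $g$.
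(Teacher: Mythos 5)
This statement is quoted from Pavic--Shen--Yin and the paper does not reprove it; its ``proof'' here is the citation together with the one-line description that one exploits the projective geometry of the Mukai models, and the only ingredient of that argument the paper actually reproduces is the projective-bundle lemma (Proposition \ref{prop:ProjectiveBundleArgument}, of which your reduction to ambient classes is exactly the case $Q=\emptyset$). Your outline faithfully reconstructs that strategy: pass to the incidence family over $\PP H^0(P,E)$, use the projective-bundle formula and the vanishing of $\xi$ on fibers to reduce to $\ima(\CH^2(X)_\QQ\to\CH^2(S)_\QQ)$, and then pin down the ambient $\CH^2$ using products of divisors, the Beauville--Voisin theorem, and the normal-bundle sequence. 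So the approach is the right one and matches the cited proof in structure.

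The caveats are two. First, for roughly half of the listed genera ($g\geq 6$, where the ambient space is a Grassmannian-type variety with $\CH^2$ of rank $2$) the entire content of the theorem sits in the Schubert-calculus verification you defer with ``granting it''; as written this is a correct skeleton, not a proof. Moreover your stated sufficient condition is slightly miscalibrated: what one actually needs is that $c_2(T_X)-c_2(\EE)$ is \emph{not} a product of divisors (so that it, together with $H^2$, spans $\CH^2(X)_\QQ$), rather than that $c_2(T_X)$ spans with divisor products \emph{and} $c_2(\EE)$ is decomposable --- for the models with non-split Mukai bundle (e.g.\ $g=12$ on $\Gr(3,7)$ or $g=20$ on $\Gr(4,9)$, where $\EE$ involves $\bw{2}\mathcal{U}^\vee$) the class $c_2(\EE)$ is itself indecomposable, and only the difference works. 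Second, the $g=2$ case is not a zero locus of a section of a bundle on a homogeneous space; one must run the same argument on the weighted projective space $\PP(1,1,1,3)$ (or an equivalent parameterization by the branch sextic), where $\CH^2_\QQ$ has rank one, which you only gesture at. Neither issue is a wrong turn, but both must be filled in before the argument establishes the theorem.
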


The values of $g$ appearing in the statement are exactly the ones where a so-called \textit{Mukai model} is available, and indeed, Theorem \ref{thm:PavicShenYin} is proven by exploiting the projective geometry of those Mukai models. Here, a Mukai model refers to a description of a general genus $g$ polarized K3 surface as the zero locus of a general section of some globally generated homogeneous vector bundle over a homogeneous variety. Examples are double covers of $\PP^2$ ramified along a sextic curve (for $g=2$), quartic surfaces in $\PP^3$ (for $g=3$), complete intersections of a hyperquadric and a cubic hypersurface in $\PP^4$ (for $g=4$), complete intersections of three hyperquadrics in $\PP^5$ (for $g=5$), complete intersections of three hyperplanes and a hyperquadric with $\Gr(2,5)$ embedded in $\PP^9$ via Pl\"ucker (for $g=6$), and so on. For more details on the geometric constructions, we refer to the original papers of Mukai \cite{Mukai88, Mukai89, Mukai06, Mukai16}, and also to \cite[Section 2]{PavicShenYin-FranchettaK3} for a summary. \\

Our main result is the following, which provides the first instance of the generalized Franchetta conjecture  \ref{conj:GFC} beyond Mukai models:
\begin{theorem}
	\label{thm:main-1}
	Conjecture \ref{conj:GFC} holds for $g=14$.
\end{theorem}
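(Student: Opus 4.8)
The plan is to transfer O'Grady's conjecture for $g=14$ into a Franchetta-type statement for a family of cubic fourfolds, using Hassett's theory together with the derived/motivic dictionary between cubic fourfolds and K3 surfaces. First I would record that $d=26$ is admissible and satisfies Hassett's numerical condition guaranteeing an associated K3 surface, and that $26=2\cdot 14-2$; hence the period map identifies an open substack of the Hassett divisor $\mathcal{C}_{26}$ of special cubic fourfolds of discriminant $26$ with an open substack of $\mathcal{F}_{14}$. In particular a very general genus $14$ polarized K3 surface $S$ is the K3 surface associated with a special cubic fourfold $X$ of discriminant $26$, the link being the Hodge isometry between the nonspecial part of $H^4(X)(1)$ and the primitive part of $H^2(S)$, categorified by the Kuznetsov equivalence $\Ku(X)\simeq\Db(S)$.

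Second, I would promote this to an isomorphism of Chow motives realised by an algebraic cycle, and work in families. A Fourier--Mukai kernel inducing $\Ku(X)\simeq\Db(S)$ gives an explicit correspondence $\Gamma\in\CH^3(S\times X)_\QQ$ that identifies the transcendental motive $\h_{\mathrm{tr}}(S)$ with the K3-type summand of $\h(X)(1)$, so that $\Gamma_\ast\colon\CH^2(S)_\QQ\xrightarrow{\sim}\CH^3(X)_\QQ$ on the relevant subgroups. Tracking the Beauville--Voisin class $\oo_S$ through $\Gamma_\ast$ produces a distinguished $1$-cycle class on the cubic side, and then Conjecture \ref{conj:GFC} for $g=14$ becomes equivalent to the Franchetta property, in the sense of Definition \ref{def:Franchetta}, for $\CH^3$ of the universal cubic fourfold over $\mathcal{C}_{26}$: every cycle on the total space restricts on each fibre to a multiple of that distinguished class. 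The point to verify is that $\Gamma$ and its inverse can be chosen over the base, after a finite or \'etale cover of $\mathcal{C}_{26}$, so that the two Franchetta properties are equivalent as statements about universal families rather than merely fibrewise.

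Third, to prove the Franchetta property for $1$-cycles on the universal cubic fourfold I would pass to the Fano variety of lines $F=F(X)$, the Beauville--Donagi hyper-K\"ahler fourfold \cite{BeauvilleDonagi}. The universal line $P\subset F\times X$ and the induced Fano correspondence, organised via the Fourier decomposition of $\CH^\ast(F)$ following Shen--Vial, identify the relevant part of $\CH^3(X)_\QQ=\CH_1(X)_\QQ$ with a part of $\CH^2(F)_\QQ$. This reduces the desired statement to the generalised Franchetta property for the universal family of Fano varieties of lines of cubic fourfolds in $\mathcal{C}_{26}$, which is of the type established with Vial in \cite{FLV-Franchetta1, FLV-MCK, FLV-Franchetta2}; the residual work is to check that those arguments persist after restriction to the special locus and to account for the extra algebraic $2$-cycle supported on the distinguished surface of a discriminant-$26$ cubic.

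I expect the crux to be the second step: making $\Gamma$ genuinely universal and matching the canonical classes on the two sides. The Hodge-theoretic and derived equivalences are statements about a single very general fibre, whereas the generalised Franchetta conjecture, through the spreading-out principle, demands a single cycle on the total spaces inducing the motivic isomorphism simultaneously over an open substack, together with control of how $\oo_S$ propagates in the family. Producing a relatively defined idempotent correspondence that cuts out the K3-type motive of the universal cubic and interchanges it with the transcendental motive of the universal K3 is where the real difficulty lies; granting this, the reduction to the Fano variety of lines and the appeal to the known hyper-K\"ahler Franchetta results should be comparatively formal.
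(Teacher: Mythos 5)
Your first two steps track the paper's actual route closely: the paper identifies $\mathcal{C}^{\mar}_{26}$ birationally with $\mathcal{F}_{14}$ via periods, and then (Proposition \ref{prop:AddingtonThomasInFamily}, Corollary \ref{cor:MotiveCubicK3}) realizes the universal Fourier--Mukai kernel by exhibiting the universal K3 as a relative moduli space of Bridgeland-stable objects in $\Ku(\mathcal{X}/\mathcal{F}_g^\circ)$, so that B\"ulles' cycles $Z=v_3(\mathcal{E})$ and $Z'=v_3(\mathcal{E}^R)$ are generically defined and induce mutually inverse isomorphisms $\CH_1(X_t)_{\hom}\simeq\CH_0(S_t)_{\hom}$ fibrewise. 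This is exactly Theorem \ref{thm:GFCK3andCubic}, and your identification of the issue (universality of the kernel) is the right one for that step.

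The genuine gap is in your third step. You propose to deduce the Franchetta property for $\CH^3$ of the universal cubic over $\mathcal{C}_{26}$ by passing to the Fano variety of lines and invoking the Franchetta results of \cite{FLV-Franchetta1, FLV-MCK, FLV-Franchetta2}. But those results concern the universal family over the \emph{whole} moduli space of cubic fourfolds, and, as Remark \ref{rmk:Subfamily} stresses, the Franchetta property for a family over $B$ implies nothing about the restricted family over a closed subvariety $B'\subset B$: over the divisor $\mathcal{C}_{26}$ there are strictly more generically defined cycles (anything supported on or built from the special surface), and these are precisely the cycles one must control. The phrase ``account for the extra algebraic $2$-cycle supported on the distinguished surface'' is not residual work --- it is the entire content of the hard half of the paper. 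Concretely, the paper needs: (a) the Farkas--Verra description of generic $X\in\mathcal{C}_{26}$ as containing a $2$-dimensional family of $3$-nodal septic scrolls parameterized by the associated K3, plus the Russo--Staglian\`o fact that such scrolls are cut out by cubics, to run the projective-bundle-with-base-locus argument (Proposition \ref{prop:ProjectiveBundleArgument}) and conclude $\GDCH^3_B(X_b)\subset \QQ h^3+\QQ\ell_t$ where $\ell_t$ is the ruling class; and (b) a separate argument showing $\ell_t\in\QQ h^3$, which goes through a constant cycle curve on $S$, the isomorphism $S^{[2]}\simeq F(X)$, Voisin's result that the second piece $\mathsf{S}_2\CH_0(F(X))$ of the orbit filtration is one-dimensional, and the push-forward along the incidence correspondence. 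None of this is supplied, or even replaceable, by the known hyper-K\"ahler Franchetta statements you cite. You have also misplaced the crux: the family version of the Fourier--Mukai kernel is handled by the existing machinery of \cite{BLMNPS}, whereas the geometric input over $\mathcal{C}_{26}$ is where the new work lies.
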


What is probably more interesting than the result is our approach to establishing it. Theorem \ref{thm:main-1} is implied by the combination of Theorem \ref{thm:main-3} and Theorem \ref{thm:main-2} below. Let us now give a brief account.

\subsection{Franchetta for special cubic fourfolds}
Special cubic fourfolds were first introduced and studied by Hassett \cite{Hassett-SpeicialCubic}. These are cubic fourfolds $X$ containing a surface $R$ whose class is not proportional to $h^2$, the square of the hyperplane class. Special cubic fourfolds come in families enumerated by the discriminant $d$ of the sublattice of $H^4(X, \mathbb{Z})$ generated by $R$ and $h^2$. The moduli space of special cubic fourfolds of discriminant $d$ is denoted by  $\mathcal{C}_d$, which is non-empty and irreducible when $d\equiv 0, 2 \pmod 6$. For $d$ satisfying an extra numerical condition $(**)$ (see Section \ref{subsec:AssociatedK3}), a special cubic fourfold $X$ of discriminant $d$ has an associated K3 surface $S$, such that $X$ and $S$ are related Hodge theoretically (\cite{Hassett-SpeicialCubic}), and it turns out there are also  strong relations between their derived categories \cite{Kuz10} \cite{AddingtonThomas} and algebraic cycles (or motives) \cite{Buelles}.
All the above is explained in more detail in Section \ref{sect:SpecialCubic}.

The proof of Theorem \ref{thm:main-1}, which uses  special cubic fourfolds of discriminant 26, can be summarized as follows.
Sending such a cubic fourfold to its (Hodge theoretically) associated K3 surface gives a birational isomorphism between $\mathcal{F}_{14}$ and the moduli space $\mathcal{C}_{26}$. Let $U$ be a common Zariski open subset and denote by $\mathcal{S}$ and $\mathcal{X}$ the universal families of K3 surfaces and of cubic fourfolds respectively. Our proof splits into two parts:\\
\textbf{Step 1.} Produce a relative correspondence over $U$ between $\mathcal{S}$ and $\mathcal{X}$, and show that Conjecture~\ref{conj:GFC} for $\mathcal{S}\to \mathcal{F}_{14}$ is equivalent to the \textit{Franchetta property} (Definition \ref{def:Franchetta}) for $\mathcal{X}\to \mathcal{C}_{26}$.\\
\textbf{Step 2.} Establish the Franchetta property for $\mathcal{X}\to \mathcal{C}_{26}$ by using the concrete geometric characterization of such cubic fourfolds due to Farkas--Verra \cite{FarkasVerra-genus14} as the ones containing certain type of scrolls. \\
The upshot is that although there is no Mukai model for K3 surfaces of genus $14$ at our disposal, we have the following replacement which is almost as good: \textit{a generic K3 surface of genus 14 is a moduli space of Bridgeland-stable objects, with certain isotropic Mukai vector,  in the Kuznetsov component of a cubic fourfold that contains a 3-nodal septic rational scroll.}\\

In this paper, both of the above steps are treated in greater generality. For Step 1, which is accomplished in Section \ref{subsect:LinkCubicK3}, we actually give a strong link between the Franchetta properties for special cubic fourfolds and for the associated K3 surfaces. Let us state here only the non-technical version. See Theorem \ref{thm:GFCK3andCubic} for a stronger form. 

\begin{theorem}
	\label{thm:main-3}
	Let $d$ be an integer satisfying the condition $(**)$ (see Section \ref{subsec:AssociatedK3}). Let $g=\frac{d}{2}+1$. If $d\equiv 2 \pmod 6$, then the Franchetta property (Definition \ref{def:Franchetta}) for the universal family over $\mathcal{C}_{d}$ is equivalent to the Franchetta property for the universal family over $\mathcal{F}_g$.
\end{theorem}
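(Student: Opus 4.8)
The plan is to transport the Franchetta property across a relative correspondence defined over a common open substack, so that the two families are interchangeable as far as generically defined cycles are concerned. First I would fix the geometric setup. Since $d$ satisfies $(**)$ and $d\equiv 2\pmod 6$, sending a special cubic fourfold of discriminant $d$ to its Hodge-theoretically associated K3 surface of degree $d=2g-2$ gives a birational map $\mathcal{C}_d\dashrightarrow \mathcal{F}_g$ (this is where the congruence enters: modulo $6$ it forces the associated polarization to be primitive of genus $g=\frac{d}{2}+1$, via the divisibility of the isotropic Mukai vector). Let $U$ be a common Zariski-open substack and let $p\colon\mathcal{X}\to U$ and $q\colon\mathcal{S}\to U$ be the two universal families. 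The object to construct is a cycle $\Gamma\in\CH^\ast(\mathcal{X}\times_U\mathcal{S})_\QQ$ whose fiberwise restriction $\Gamma_b\in\CH^\ast(X_b\times S_b)_\QQ$ induces, together with its transpose, mutually inverse isomorphisms between the variable (transcendental) parts of the Chow groups and matches the algebraic parts faithfully on cohomology.

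Fiberwise, the equivalence $\Ku(X_b)\simeq \Db(S_b)$ of Kuznetsov and Addington--Thomas (valid precisely under $(**)$) supplies a Fourier--Mukai kernel whose Chern character gives Bülles' correspondence \cite{Buelles} identifying the transcendental motive of $X_b$ with a Tate twist of that of $S_b$; in particular $(\Gamma_b)_\ast$ restricts to an isomorphism between the homologically trivial pieces $\CH^2(X_b)_{\QQ,\hom}$ and $\CH_0(S_b)_{\QQ,\hom}$. The next step is to spread this out: after possibly shrinking $U$ (or passing to a finite étale cover), the relative moduli space of Bridgeland-stable objects with the chosen isotropic Mukai vector realizes $\mathcal{S}$ and furnishes a genuine relative kernel, hence the desired class $\Gamma$ defined over all of $U$. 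The essential feature gained from globality is that $(\Gamma_b)_\ast$ and $(\Gamma_b^{\top})_\ast$ carry generically defined cycles to generically defined cycles, i.e. $(\Gamma_b)_\ast\,\GDCH^2(X_b)\subseteq \GDCH^2(S_b)$ and symmetrically.

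With $\Gamma$ in hand the equivalence of Franchetta properties becomes formal. The derived-equivalence origin of $\Gamma$ guarantees that $\Gamma_b^{\top}\circ\Gamma_b$ and $\Gamma_b\circ\Gamma_b^{\top}$ act as nonzero multiples of the identity on the transcendental summand, so the two transforms are mutually inverse there; on the algebraic summand one matches $\langle h^2, R\rangle$ on the cubic side with $\langle \oo_{S_b}, H^2\rangle$ on the K3 side, checking that this identification is compatible with the cycle class maps to $H^\ast(X_b,\QQ)$ and $H^\ast(S_b,\QQ)$. Consequently a generically defined cycle on $X_b$ is cohomologically trivial if and only if its $\Gamma_b$-image on $S_b$ is, and likewise for rational triviality; the injectivity of $\GDCH^\ast\hookrightarrow H^\ast$ required by Definition \ref{def:Franchetta} therefore holds on the cubic side if and only if it holds on the K3 side. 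This yields the asserted equivalence, and the stronger bookkeeping (tracking the full lattice of generically defined classes) is what gets recorded in the refined Theorem \ref{thm:GFCK3andCubic}.

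The main obstacle is the construction in the second paragraph: producing $\Gamma$ as an honest relative correspondence over $U$ and controlling its fiberwise action precisely. The derived equivalence is only guaranteed on fibers, and spreading the Fourier--Mukai kernel into a family is delicate—one must ensure a genuine relative object exists (which may require shrinking $U$ or a finite base change) and that its fiberwise Chow-theoretic action is exactly the Bülles correspondence. Equally delicate is pinning down the action on the algebraic parts: the correspondence must send the generically defined classes $h^2$ and $R$ to multiples of $\oo_{S_b}$ and the polarization class in a cohomologically faithful way, and this matching is precisely where the divisibility constraints forced by $d\equiv 2\pmod 6$ must be reconciled with the Mukai-vector data. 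Once this compatibility is secured, the transfer of the Franchetta property in both directions is purely formal.
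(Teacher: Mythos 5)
Your proposal follows essentially the same route as the paper: a birational identification of the two moduli spaces via the period domains, a relative Fourier--Mukai kernel obtained from the relative moduli space of Bridgeland-stable objects with isotropic Mukai vector (spreading out B\"ulles' fiberwise isomorphism $\CH_1(X_b)_{\hom}\simeq\CH_0(S_b)_{\hom}$ after shrinking the base), and then a formal transfer of the Franchetta property through the resulting commutative square of Gysin restrictions --- note that this last step only needs the correspondence to be an isomorphism on the homologically trivial parts, so the careful matching of the algebraic summands you worry about is not actually required. One small correction: the congruence $d\equiv 2\pmod 6$ enters not through primitivity of the associated polarization but because it makes $\mathcal{C}_d^{\mar}\to\mathcal{C}_d$ birational (a normalization) rather than of degree $2$, which is what upgrades the equivalence from the marked moduli space $\mathcal{C}_d^{\mar}$ to $\mathcal{C}_d$ itself.
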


In view of Step 2, we are led to ask the following question. As cubic fourfolds are considered as non-commutative analogues of K3 surfaces \cite{Kuz10}, the following can be seen as a non-commutative version of O'Grady's generalized Franchetta conjecture \ref{conj:GFC}.
\begin{question}
	Let $d>6$ be an integer $\equiv 0 \text{ or } 2 \pmod 6 $. Does the universal family $\mathcal{X} \to \mathcal{C}_{d}$  of special cubic fourfolds of discriminant $d$ satisfy the Franchetta property (Definition \ref{def:Franchetta})? That is, for any $b\in \mathcal{C}_d$,
	\begin{equation}\label{eq:FranchettaCubic}
		\ima\left(\CH^3(\mathcal{X})_\QQ\to \CH^3(X_b)_\QQ\right) \overset{?}{=} \QQ h^3.
	\end{equation}
\end{question}
The left-hand side is often denoted by $\GDCH^3_B(X_b)$ is this paper.\\

We answer this question affirmatively in a few cases:
\begin{theorem}
	\label{thm:main-2}
	The Franchetta property \eqref{eq:FranchettaCubic} holds for the universal family of special cubic fourfolds $\mathcal{C}_{d}$ with discriminant $d= 8, 14, 20, 26, 38$.
\end{theorem}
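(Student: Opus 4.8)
The plan is to treat all five discriminants by one template, reducing \eqref{eq:FranchettaCubic} to a statement about $0$-cycles on an auxiliary surface that the geometric model makes explicit. I begin with a cohomological reduction. For a smooth cubic fourfold $X$ one has $H^6(X,\QQ)=\QQ h^3$, so the cycle class map sends $\GDCH^3_B(X_b)$ into a one-dimensional space, with $h^3$ mapping to a nonzero element; hence the image automatically contains $\QQ h^3$ and cannot be larger in cohomology. The entire content of \eqref{eq:FranchettaCubic} is therefore the implication that a generically defined, homologically trivial $1$-cycle is already rationally trivial, i.e. $\GDCH^3_B(X_b)\cap \CH^3(X_b)_{\hom}=0$. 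By spreading out it suffices to prove this over a dense open $U\subseteq\mathcal{C}_d$ for the universal family $\mathcal{X}_U\to U$.

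Next I would input the case-by-case geometry from Step 2 of the introduction: a general member of $\mathcal{C}_d$ contains a distinguished rational surface $R$ — a plane for $d=8$, a quartic rational normal scroll (the Pfaffian locus) for $d=14$, a Veronese surface for $d=20$, the $3$-nodal septic scroll of Farkas--Verra for $d=26$, and its analogue for $d=38$. Projection from $R$ gives a birational map to $\PP^4$ (for $d=8$, a quadric surface bundle over $\PP^2$ after one blow-up), which after resolution takes the form $\mathrm{Bl}_R X \cong \mathrm{Bl}_{R'}\PP^4$ for a second surface $R'\subset\PP^4$. The crucial feature is that $R$ is rational, hence of Tate type, while $R'$ carries the transcendental (K3-type) cohomology of $X$: it is the associated K3 surface when $d$ satisfies $(**)$, and a twisted K3 surface for $d=8,20$. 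All of this deforms, so after shrinking $U$ the surfaces organize into universal families $\mathcal{R},\mathcal{R}'\to U$ and the identification spreads out relatively. Applying the blow-up formula fiberwise yields $\CH^3(X)_{\hom,\QQ}\cong \CH_0(R')_{\hom,\QQ}$, since $\PP^4$ and the rational surface $R$ contribute only Tate classes. Hence the homologically trivial generically defined $1$-cycles on $X$ are identified with the homologically trivial generically defined $0$-cycles on $\mathcal{R}'$.

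The remaining and decisive step is to establish, directly from the explicit model, that the generically defined $0$-cycles on $\mathcal{R}'$ reduce to a single canonical class (the Beauville--Voisin class in the K3 case). This is where the scroll description plays the role of a substitute for a Mukai model: the rulings of the scroll and their images sweep out distinguished rational curves on $X$ and on $R'$, and a Beauville--Voisin-type analysis of the rational equivalences they generate, propagated over the family, forces any generically defined $0$-cycle onto that canonical class, so that its homologically trivial part vanishes. Transporting this back through the correspondence collapses the homologically trivial generically defined $1$-cycles on $X$ to zero, which is precisely \eqref{eq:FranchettaCubic}.

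The \textbf{main obstacle} is to carry out this last step honestly in families and in the presence of singularities. For $d=26$ the septic scroll has three nodes, and one must resolve them simultaneously over $U$ and track how they and the exceptional loci vary before the Tate-type and rational-curve arguments apply; this is the most delicate bookkeeping. More fundamentally, the apparent uniformity is deceptive: the five discriminants are five genuinely different constructions — a plane, several kinds of scrolls and a Veronese surface, and a nodal scroll — each requiring its own verification, with the numerical relations among $[R]$, $h^2$ and $h^3$ imposed by the rank-two discriminant-$d$ lattice used to fix every proportionality constant. I expect $d=26$ to be the hardest case and to serve as the template for the rest.
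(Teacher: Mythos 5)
Your proposal has two genuine gaps. First, the birational decomposition $\mathrm{Bl}_R X\cong \mathrm{Bl}_{R'}\PP^4$ on which your whole reduction rests does not exist in several of the cases: a cubic fourfold containing a plane ($d=8$) is not rational via projection from the plane (one only gets a quadric surface bundle over $\PP^2$, and the associated surface is a \emph{twisted} K3), and $d=20$ is not even admissible, so there is no associated K3 surface and no such rationality construction is known; even for $d=14,26,38$ the known birational maps to $\PP^4$ are not of the simple form ``blow up $R$ in $X$ and a surface in $\PP^4$'' (for instance the genus-$8$ K3 associated to a Pfaffian cubic does not sit in $\PP^4$ as such a center). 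The paper never uses rationality of $X$. Its actual computational device is the projective-bundle argument with base locus (Proposition \ref{prop:ProjectiveBundleArgument}): since $R$ is cut out by cubics (resp.\ quadrics), the linear system of cubics through $R$ is a projective bundle over $\PP^5\setminus R$, which yields the exact equality $\GDCH^3_{q^{-1}(t)}(X_b)=\QQ h^3+\QQ\ell_t$, with $\ell_t$ the ruling class of the scroll and with $h\cdot R_t$ proportional to $h^3$ by a normal-bundle computation. Your proposal contains no mechanism that actually pins down the group of generically defined cycles.

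Second, and more seriously, your ``decisive step'' is circular in the principal case. Identifying $\CH^3(X)_{\hom}$ with $\CH_0(S)_{\hom}$ for the associated K3 and then asserting that generically defined $0$-cycles on the family of genus-$14$ K3 surfaces collapse to the Beauville--Voisin class is precisely O'Grady's Conjecture \ref{conj:GFC} for $g=14$ --- which is the paper's main theorem and is \emph{deduced from} the $d=26$ case of Theorem \ref{thm:main-2}, not an available input; there is no Mukai model for $g=14$, so no ``Beauville--Voisin-type analysis'' of the kind you invoke exists on the K3 side. The paper's substitute is entirely different: after reducing to the single ruling class $\ell_t$, it chooses a constant cycle curve $C\subset S$, uses the isomorphism $S^{[2]}\simeq F(X)$ to produce a constant cycle surface through the point $L\in F(X)$ representing the family of rulings, concludes $L\in\mathsf{S}_2\CH_0(F(X))=\QQ g^4$ by Voisin's theorem on the orbit filtration, and pushes forward through the incidence correspondence to get $\ell\in\QQ h^3$. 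Note finally that for $d=38$ the paper does not argue geometrically at all: it runs Theorem \ref{thm:main-3} in reverse, importing the known $g=20$ case of Conjecture \ref{conj:GFC} from Pavic--Shen--Yin, so your uniform scroll template does not match that case either.
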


\begin{remark}[Relation with previous results]
	\label{rmk:KnownResults}
	In Theorem \ref{thm:main-2}, the case $d=8$ provides a new proof of Conjecture \ref{conj:GFC} for $g=2$; the case $d=14$ gives a new proof of Conjecture \ref{conj:GFC} for $g=8$ using Theorem \ref{thm:main-3}; the case $d=20$ has been proven using different methods in our previous joint work with Vial  \cite[Lemma 6.3]{FLV-MCK}; the case $d=26$ is the principal case, yielding Theorem \ref{thm:main-1}; the case $d=38$ is proven using Theorem \ref{thm:main-3} and the $g=20$ case of Conjecture \ref{conj:GFC}, demonstrating the flow of information in the reverse direction. Finally, note that the Franchetta property for the  universal family over the whole moduli space of cubic fourfolds can be easily checked (see however \cite[Theorem 2]{FLV-Franchetta2} for stronger and more interesting results).
\end{remark}

\subsection*{Potential and limits}
Cubic fourfolds are instances of the so-called \textit{varieties of K3 type} (see \cite{FLV-MCK}), which means an even-dimensional smooth projective variety $X$ whose Hodge numbers $h^{p,q}(X)=0$ for all $p\neq q$ except for $h^{m-1, m+1}(X)=h^{m+1, m-1}(X)=1$ where $2m=\dim(X)$. The terminology is justified by the observation that its middle cohomology group $\HH^{2m}(X, \ZZ)$, up to a Tate twist, carries a weight-2 Hodge structure of K3 type. Examples of varieties of K3 type include cubic fourfolds, Gushel--Mukai fourfolds and sixfolds 
\cite{Mukai88, DebarreIlievManivel-GM4, KP16, DebarreKuznetsov-GM1}, Debarre--Voisin 20-folds \cite{DebarreVoisin} etc.; see \cite{BFMT-FanoK3} for a recent updated list.

We expect that our approach will lead to further progress on Conjecture~\ref{conj:GFC}: whenever a concrete geometric description is discovered for a family of varieties of K3 type whose generic member has an associated K3 surface which is generic in the moduli space $\mathcal{F}_g$, our argument gives access to the generalized Franchetta conjecture for this $g$. We view Theorem~\ref{thm:main-1} for $g= 14$, as well as our new proofs for $g=2$ and 8 without using Mukai models (Remark \ref{rmk:KnownResults}), merely as the first examples of this approach.

In the past few years, we witnessed a rapid development on the projective geometry of special cubic fourfolds \cite{Nuer17, Lai, FarkasVerra-genus14, RussoStagliano-Duke, BolognesiRussoStagliano-C14, FarkasVerra-genus22},  Gushel--Mukai fourfolds \cite{DebarreIlievManivel-GM4, HoffStagliano-20}, and Debarre--Voisin 20-folds \cite{BenedettiSong}. These achievements will certainly shed light on the geometry of K3 surfaces in the future. 

An initial motivation to construct Mukai models was to prove the \textit{unirationality} of the moduli spaces $\mathcal{F}_g$ for $g$ taking values as in Theorem \ref{thm:PavicShenYin}. Recent progress in this direction is due to Farkas--Verra \cite{FarkasVerra-genus14} (for $g=14$), Farkas--Verra \cite{FarkasVerra-genus22} (for $g=22$), and Hoff--Staglian\`o \cite{HoffStagliano-20} (a new proof for $g=11$, originally due to Mukai \cite{Mukai-genus11}). Both the arguments in \cite{PavicShenYin-FranchettaK3} and in the present paper require ``parameterizing'' K3 surfaces by a flag variety, which in practice always takes the form of a unirational parameterization. 

However, Gritsenko--Hulek--Sankaran \cite{GritsenkoHulekSankaran-KodK3} showed that $\mathcal{F}_g$ is of non-negative Kodaira dimension, hence not unirational, for $g\geq 41$ and $g\neq 42, 45, 46, 48$; see similar results for moduli spaces of special cubic fourfolds in \cite[Proposition 1.3]{Nuer17} and for moduli spaces of special Gushel--Mukai fourfolds in \cite{Petok}. Therefore, for a high genus in this range, some entirely new idea is needed to study the generalized Franchetta conjecture.

\bigskip
The paper is organized as follows: In Section \ref{sec:Generalities}, we collect some basic facts concerning the Franchetta property. In Section \ref{sect:SpecialCubic}, we first recap the theory of special cubic fourfolds and their associated K3 surfaces, then we establish  the bridge between their Franchetta properties, namely, Theorem \ref{thm:main-3} (or rather its more precise version Theorem \ref{thm:GFCK3andCubic}). In the remaining sections, as their titles indicate, we prove Theorem \ref{thm:main-2} case by case and give applications.

\subsection*{Convention\,:} Throughout the paper, we work  over the field of complex numbers $\C$. All Chow groups and Chow motives are with rational coefficients:
for any (possibly singular) variety $X$ of dimension $d$ we write $\CH_i(X)=\CH^{d-i}(X)$ for the group of $i$-dimensional algebraic cycles with $\QQ$-coefficients modulo rational equivalence.
A \textit{lattice} means a free abelian group of finite rank equipped with a symmetric bilinear pairing.

\subsection*{Acknowledgment:} We would like to thank Emma Brakkee, Michael Hoff, Kuan-Wen Lai, Chunyi Li, Francesco Russo, Paolo Stellari, Xiaolei Zhao for helpful discussions.

\section{Franchetta property and generically defined cycles}
\label{sec:Generalities}

To tackle  with the generalized Franchetta conjecture, we will need to study this property beyond the scope of K3 surfaces:

\begin{definition}[Franchetta property \cite{FLV-Franchetta1, FLV-Franchetta2}]
	\label{def:Franchetta}
	Let $\mathcal{X}\to B$ be a smooth projective morphism between complex varieties (or algebraic stacks). For an integer $i\geq 0$, we say that the family $\mathcal{X}/B$ satisfies the \textit{Franchetta property} for codimension-$i$ cycles, if for any $z\in \CH^i(\mathcal{X})_\QQ$ and any $b\in B$, the Gysin restriction $z|_{X_b}$ is rationally equivalent to zero if and only if its cohomology class $[z|_{X_b}]=0$ in $\HH^{2i}(X_b, \QQ)$. If this holds for all $i$, we simply say that $\mathcal{X}/B$ has the Franchetta property. Again, by spreading out rational equivalence, it is equivalent to requiring the same property only for very general $b\in B$.
\end{definition}

\begin{remark}
	\label{rmk:Subfamily}
	Note that there is no implication in either direction between the Franchetta properties for a family $\mathcal{X}\to B$ and for a subfamily $\mathcal{X}_{B'}\to B'$, where $B'$ is a closed subscheme of $B$ (see \cite[p.1]{FLV-Franchetta2}).  However, if $B'\to B$ is a dominant morphism, the Franchetta property of the base-changed family $\mathcal{X}_{B'}\to B'$ implies the Franchetta property for $\mathcal{X}\to B$ (see \cite[Remark 2.6]{FLV-Franchetta1}); in particular, on can freely replace $B$ by a non-empty Zariski open subset. 
\end{remark}

To study the generalized Franchetta conjecture \ref{conj:GFC}, or more generally the Franchetta property (Definition \ref{def:Franchetta}), it is convenient to introduce the following notion.

\begin{definition}[Generically defined cycles]
	let $\pi\colon \mathcal{X}\to B$ be a smooth projective morphism between complex varieties (or algebraic stacks). Let  $X$ be a fiber of $\pi$ over a closed point. We define the group of {\em generically defined cycles} on $X$ as the following graded subgroup of $\CH^*(X)$:
	\[ \GDCH^*_B(X):=\ima\left( \CH^*(\mathcal{X})\to \CH^*(X)\right),\]
	where the morphism is the Gysin restriction map. 
\end{definition}
Using this notation, the Franchetta property (Definition \ref{def:Franchetta}) for $\mathcal{X}/B$ is equivalent to the injectivity of the cycle class map:
\[\GDCH^*_B(X)\to \HH^*(X, \QQ),\]
for all (or equivalently, for very general) fibers $X$.

\medskip
In \cite{PavicShenYin-FranchettaK3}, a key step is an argument using projective bundles, which is further generalized in  \cite{FLV-Franchetta1, FLV-Franchetta2} into a stratified version. Here we provide the following variant allowing base locus, which is the basic tool in our paper. 
 \begin{proposition}[Projective bundle argument: with base locus]
 	\label{prop:ProjectiveBundleArgument} 
 	Let $P$ be a smooth projective variety and  let $E$ be a vector bundle on it. 
	Let $Q\subset P$ be a (possibly singular) closed subvariety.
	Let
	\[ B\ \ \subset\ \bar{B}:= \PP H^0(P, E\otimes \II_{Q}) \]
	denote the Zariski open subset parameterizing smooth dimensionally transversal sections of $E$ vanishing along $Q$, and let $\pi\colon\mathcal{X}\to B$ denote the universal family of zero loci of such sections.
	Assume that $B$ is not empty, and that the sections in $H^0(P, E\otimes \II_{Q})$ globally generate $E$ outside of $Q$. Then 	for any fiber $X$ of $\pi$, we have
	\[ \GDCH^\ast_B (X) = \ima\left(\CH^\ast(P)\to \CH^\ast(X)\right)  + \ima\left( \CH^\ast(Q)\to \CH^\ast(X)\right),\]
	where on the right-hand side, the first morphism is the Gysin restriction map and the second morphism is the push-forward via the natural inclusion. 
\end{proposition}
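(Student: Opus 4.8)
The plan is to prove the two inclusions separately, the inclusion $\supseteq$ being formal and the inclusion $\subseteq$ carrying all the content. For $\supseteq$, a class $\alpha\in\CH^\ast(P)$ pulls back along the projection $p\colon\mathcal{X}\to P$ to a class on $\mathcal{X}$ whose Gysin restriction to a fibre $X=X_b\hookrightarrow P$ is precisely $\alpha|_X$, giving $\ima(\CH^\ast(P)\to\CH^\ast(X))\subseteq\GDCH^\ast_B(X)$. For the $Q$-part, the crucial observation is that every section in $H^0(P,E\otimes\II_Q)$ vanishes along $Q$, so $Q\subseteq X_b$ for every $b\in B$; hence $Q\times B$ sits inside $\mathcal{X}$, and spreading $\beta\in\CH^\ast(Q)$ constantly over $B$ (i.e.\ $\operatorname{pr}_Q^\ast\beta$ on $Q\times B$) and pushing it into $\mathcal{X}$ produces a generically defined class restricting to $\iota_\ast\beta$ on each fibre, where $\iota\colon Q\hookrightarrow X_b$. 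This settles $\supseteq$.

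For $\subseteq$ I would realise $\mathcal{X}$ inside the incidence variety $\mathcal{I}=\{(x,[s])\in P\times\bar{B}: s(x)=0\}$ as $\mathcal{X}=\mathcal{I}\cap(P\times B)$, and stratify $P$ by $Q$ and its complement. Over $P\setminus Q$ the global generation hypothesis says the evaluation map $H^0(P,E\otimes\II_Q)\otimes\OO\to E$ is surjective, so its kernel $K$ is a vector bundle there and $\mathcal{I}^\circ:=\mathcal{I}\cap((P\setminus Q)\times\bar{B})=\PP(K)$ is an honest projective bundle over $P\setminus Q$; consequently $\mathcal{X}^\circ:=p^{-1}(P\setminus Q)$ is a Zariski open subset of $\PP(K)$. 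The key lemma I would record is that the relative hyperplane class $\xi$ of this projective bundle is the restriction of the globally defined class $h\in\CH^1(\mathcal{X})$ pulled back from the hyperplane class of $\bar{B}=\PP H^0(P,E\otimes\II_Q)$, because the tautological subbundle of $\PP(K)$ is induced from that of $\PP H^0$.

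Granting this, the projective bundle formula together with the localisation surjection $\CH^\ast(\PP(K))\twoheadrightarrow\CH^\ast(\mathcal{X}^\circ)$ lets me write, for a given $z\in\CH^\ast(\mathcal{X})$, its restriction as $z|_{\mathcal{X}^\circ}=\sum_i p^\ast(\alpha_i^\circ)\cdot\xi^i$ with $\alpha_i^\circ\in\CH^\ast(P\setminus Q)$. Lifting each $\alpha_i^\circ$ to $\alpha_i\in\CH^\ast(P)$ (possible since $\CH^\ast(P)\to\CH^\ast(P\setminus Q)$ is surjective) and replacing $\xi$ by the global class $h$, I set $z':=\sum_i p^\ast(\alpha_i)\cdot h^i$, which agrees with $z$ on $\mathcal{X}^\circ$. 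The localisation sequence for the closed complement $j\colon Q\times B\hookrightarrow\mathcal{X}$ then yields $z-z'=j_\ast w$ for some $w\in\CH^\ast(Q\times B)$. Restricting to a fibre $X=X_b$: since $h$ is pulled back from the base, $h|_{X_b}=0$, so all the $i\ge 1$ terms of $z'|_{X_b}$ vanish and only $\alpha_0|_X\in\ima(\CH^\ast(P)\to\CH^\ast(X))$ survives; and base change along the Cartesian square $Q=Q\times\{b\}\hookrightarrow Q\times B$ over $X_b\hookrightarrow\mathcal{X}$ gives $(j_\ast w)|_{X_b}=\iota_\ast(w|_{Q\times\{b\}})\in\ima(\CH^\ast(Q)\to\CH^\ast(X))$. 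Combining the two contributions proves $\subseteq$.

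I expect the main obstacle to be exactly the presence of the base locus $Q$, which destroys the clean global projective bundle structure used in the base-locus-free version of the argument (as in \cite{PavicShenYin-FranchettaK3, FLV-Franchetta1}): over $Q$ the incidence fibre jumps to all of $\bar{B}$. The two technical points that make the localisation argument go through are the identification $\xi=h|_{\mathcal{X}^\circ}$ with a class defined on all of $\mathcal{X}$, without which one cannot extend the projective-bundle expression across $Q$, and the compatibility of the refined Gysin restriction to the fibre with the proper push-forward $j_\ast$, which converts the correction term into a genuine push-forward from $\CH^\ast(Q)$.
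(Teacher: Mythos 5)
Your proof is correct and follows essentially the same route as the paper's: the projective-bundle structure of the incidence variety over $P\setminus Q$ (via global generation away from the base locus), the projective bundle formula with the relative hyperplane class identified as the restriction of the class pulled back from $\bar{B}$, and the localisation exact sequence for the closed stratum coming from $Q$. The only cosmetic difference is that you run localisation on the total space $\mathcal{X}$ (writing $z=z'+j_\ast w$ and then restricting to the fibre, which requires the compatibility of the refined Gysin map with proper push-forward that you correctly invoke), whereas the paper compares images over $X\setminus Q$ and applies localisation directly on the fibre $X$; the two versions are equivalent.
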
 

\begin{proof} 
	Let  $\bar{\mathcal{X}}\to\bar{B}$ denote the universal family of zero loci of sections. The assumption that  $E$ is globally generated outside of $Q$ by its sections vanishing along $Q$ implies that  the evaluation map $\bar{\mathcal{X}}\to P$ restricts to a projective bundle over the open subset $P\setminus Q$. Reasoning with the projective bundle formula as in \cite[Lemma 1.1]{PavicShenYin-FranchettaK3} or \cite[Proposition 2.6]{FLV-Franchetta2}, this readily gives that
	\[ \ima\bigl(  \CH^\ast(\bar{\mathcal{X}}\setminus (Q\times\bar{B}))\to \CH^\ast(X\setminus Q)\bigr) =
	\ima\bigl( \CH^\ast(P\setminus Q)\to \CH^\ast(X\setminus Q)\bigr)\ .\]       
	By the localization exact sequence for Chow groups, this implies that
	\[ \GDCH^\ast_B (X)\ \subset\   \ima\bigl( \CH^\ast(P)\to \CH^\ast(X)\bigr) +  \ima\bigl( \CH^*(Q)\to \CH^\ast(X)\bigr)\ .\]
	The converse inclusion is obvious.
\end{proof}

The following easy observation abstracts a basic setup that will be repeatedly used in our proof of Theorem \ref{thm:main-2}. In practice, $P$ is some incidence variety in $B\times T$ which dominates $B$.
\begin{lemma}
	\label{lemma:LinkFanoScroll}
	Let $P, B, T$ be varieties  and let $p\colon P\to B$ and $q\colon P\to T$ be morphisms. Let $\pi\colon \mathcal{X}\to B$ be a smooth projective morphism.  For a point $b\in B$ lying in the image of $p$, let $X:=X_b$ be the fiber of $\pi$ over $b$. Then 
	\[\GDCH^*_B(X)\subset \bigcap_{t\in q(p^{-1}(b))}\GDCH^*_{q^{-1}(t)}(X),\]
	where on the right-hand side, $X$ is viewed as a fiber in the base change to $P$ (or rather to $q^{-1}(t)$) of the family $\mathcal{X}/B$.
\end{lemma}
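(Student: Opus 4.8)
The plan is to unwind both sides to the definition of generically defined cycles and reduce the asserted containment to the compatibility of fiberwise Gysin restriction with base change. Fix a point $b\in p(P)\subset B$, write $X=X_b$, and fix $t\in q(p^{-1}(b))$. By the very definition of this index set there is a point $\tilde b\in p^{-1}(b)$ with $q(\tilde b)=t$; in particular $\tilde b$ lies in $P_t:=q^{-1}(t)$ and $p(\tilde b)=b$. First I would form the base-changed family $\pi_t\colon\mathcal{X}_{P_t}:=\mathcal{X}\times_B P_t\to P_t$ along $\rho\colon P_t=q^{-1}(t)\hookrightarrow P\xrightarrow{p}B$, which is again smooth projective, together with its projection $r\colon\mathcal{X}_{P_t}\to\mathcal{X}$. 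The fiber of $\pi_t$ over $\tilde b$ is canonically identified with $X_{p(\tilde b)}=X_b=X$, and this is exactly the identification under which the right-hand side $\GDCH^*_{P_t}(X)=\ima\left(\CH^*(\mathcal{X}_{P_t})\to\CH^*(X)\right)$ is computed. Moreover, under this identification, the composite $X\hookrightarrow\mathcal{X}_{P_t}\xrightarrow{r}\mathcal{X}$ equals the inclusion of $X$ as the fiber of $\pi$ over $b$.

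The heart of the argument is the commutativity of the triangle of restriction maps
\[\CH^*(\mathcal{X})\xrightarrow{\ r^*\ }\CH^*(\mathcal{X}_{P_t})\xrightarrow{\ (-)|_X\ }\CH^*(X),\]
whose composite equals the Gysin restriction $(-)|_X\colon\CH^*(\mathcal{X})\to\CH^*(X)$ to the fiber over $b$. Granting this, one immediately obtains
\[\GDCH^*_B(X)=\ima\bigl((-)|_X\circ r^*\bigr)\subset\ima\bigl(\CH^*(\mathcal{X}_{P_t})\to\CH^*(X)\bigr)=\GDCH^*_{P_t}(X),\]
and since $t\in q(p^{-1}(b))$ was arbitrary, this yields the claimed inclusion into the intersection. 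Concretely, given $z\in\CH^*(\mathcal{X})$ one sets $\tilde z:=r^*z$ and reads off $z|_X=\tilde z|_X\in\GDCH^*_{P_t}(X)$.

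Two points require care, neither of them deep. The pullback $r^*$ must first be defined: in all the situations arising in Theorem \ref{thm:main-2} the total spaces $\mathcal{X}$ and $\mathcal{X}_{P_t}$ are smooth, so $r$ is a morphism of smooth varieties and $r^*$ is defined via its graph; in general one uses the refined Gysin homomorphism attached to the Cartesian square defining $\mathcal{X}_{P_t}$ (for instance through the diagonal of $B$ when $B$ is smooth, as holds for the moduli stacks in play). The main---and essentially only---obstacle is then the commutativity of the triangle above, that is, the fact that restricting to the common fiber $X$ is compatible with the base change along $\rho\colon P_t\to B$. This is the standard base-change compatibility of refined Gysin maps in intersection theory (Fulton, Chapter 6), applied to the Cartesian square with vertical maps $\pi_t$ and $\pi$ and horizontal maps $r$ and $\rho$; the stacky version is handled identically using the rational Chow groups of \cite{Vistoli97}. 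The converse-free nature of the statement means that, beyond setting up these base-change diagrams correctly, no genuine geometric input is needed.
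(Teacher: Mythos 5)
Your proposal is correct and follows essentially the same route as the paper: the paper's proof also restricts $p$ to $q^{-1}(t)$ to get a morphism $q^{-1}(t)\to B$ whose image contains $b$, and deduces $\GDCH^*_B(X)\subset\GDCH^*_{q^{-1}(t)}(X)$ from the compatibility of Gysin restriction with base change, exactly the triangle you make explicit. Your write-up merely spells out the base-change diagram and the definition of $r^*$ in more detail than the paper's three-line argument.
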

\begin{proof}
For any $(b, t)\in B\times T$ such that $p^{-1}(b)\cap q^{-1}(t)\neq \emptyset$, we have, by restricting $p$, a morphism $q^{-1}(t) \to B$ whose image contains $b$.
Therefore, $\GDCH^*_B(X)\subset \GDCH^*_{q^{-1}(t)}(X)$, where $X=X_b$. One can conclude by letting $t$ run through $q(p^{-1}(b))$.
\end{proof}

\section{Special cubic fourfolds and associated K3 surfaces}
\label{sect:SpecialCubic}

Let $X$ be a cubic fourfold, that is, a smooth hypersurface of degree 3 in $\PP^5$. 
Its middle cohomology group $\HH^4(X, \ZZ)$ equipped with the intersection pairing is naturally a unimodular lattice abstractly isometric to $\mathbf{I}_{21,2}$ and, up to a Tate twist, it also carries a weight-2 Hodge structure of K3 type with Hodge numbers $(1, 21, 1)$.
Denote by $h:=c_1(\mathcal{O}_X(1))\in  \HH^2(X, \ZZ)$ the hyperplane section class. 
The $h$-primitive cohomology group
\[\HH^4(X, \ZZ)_0=\{h^2\}^{\perp}\]
is a Hodge structure of K3 type with Hodge numbers $(1, 20, 1)$, and as a lattice is isometric to the following \textit{cubic lattice}:
\[\Gamma:= E_8^{\oplus 2}\oplus U^{\oplus 2}\oplus A_2,\]
where $E_8$ is the unique positive definite unimodular even lattice of rank 8, $U$ is the hyperbolic plane, and $A_2$ is the lattice with intersection form $\left( \begin{smallmatrix}
2 &-1\\-1 &2
\end{smallmatrix}\right)$. We can fix embeddings without loss of generality (such embeddings are unique up to isometries of $\mathbf{I}_{21, 2}$):
\[h^2\in \mathbf{I}_{21, 2} \quad \text{ and  }\quad \Gamma=\{h^2\}^\perp\subset \mathbf{I}_{21,2}.\]

The moduli space of cubic fourfolds is denoted by $\mathcal{C}:=|\mathcal{O}_{\PP^5}(3)|_{\operatorname{sm}}/\operatorname{PGL}_6$. The local period domain
\begin{equation}
	\label{eqn:OmegaGamma}
	\Omega(\Gamma):=\left\{\omega\in \PP(\Gamma\otimes \C)~\vert~ \omega^2=0, ~ \omega\cdot \overline{\omega}<0\right\}
\end{equation}
is equipped with  a natural action of the group 
\[\wt{O}(\Gamma):=\left\{g\in O(\Gamma)~\vert~ \bar{g}|_{A_\Gamma}=\ide_{A_\Gamma}\right\}=\left\{\tilde{g}\in O(\mathbf{I}_{21,2})~\vert~ \tilde{g}(h^2)=h^2\right\},\]
where $A_\Gamma=\Gamma^\vee/\Gamma$ is the discriminant group of $\Gamma$.
The corresponding quotient is called the \textit{global period domain}
\[\mathcal{D}:=\Omega(\Gamma)/\wt{O}(\Gamma),\]
which is a normal and quasi-projective variety by \cite{BailyBorel}. Sending a cubic fourfold $X$ to its period $H^{3,1}(X)$, we get the period map
$\mathcal{C}\to \mathcal{D}$,
which is shown to be an open immersion by Voisin \cite{VoisinCubicTorelli}.

\subsection{Special cubics}
\label{subsec:SpecialCubics}
Denote the subgroup of integral Hodge classes by
\[\HH^{2,2}(X, \ZZ):=\HH^{2,2}(X)\cap \HH^4(X, \ZZ),\]
which is also the subgroup of algebraic classes, thanks to the integral Hodge conjecture proved by Voisin \cite{Voisin07}. 

For a very general cubic fourfold, $\HH^{2,2}(X, \ZZ)=\ZZ h^2$. Following \cite{Hassett-SpeicialCubic}, a cubic fourfold $X$ is called \textit{special}, if 
$\HH^{2,2}(X, \ZZ)$ is of rank at least two. More precisely, a \textit{marked} cubic fourfold is a (special) cubic fourfold together with a primitive embedding of lattices 
$K\hookrightarrow \HH^{2,2}(X, \ZZ)$ from a rank-2 lattice $K$  such that the image contains $h^2$. A \textit{labelled} cubic fourfold is a cubic fourfold together with a primitive rank-2 sublattice $K\subset \HH^{2,2}(X, \ZZ)$ containing $h^2$. Such an embedding (resp.~a sublattice) is called a \textit{marking} (resp.~a \textit{labelling}), and the determinant of the intersection matrix of $K$ is called the \textit{discriminant} of the (marked or labelled) special cubic fourfold. It turns out (\cite[Proposition 3.2.4]{Hassett-SpeicialCubic}) that the lattice $K$, as well as its embedding into $\mathbf{I}_{21,2}$, is determined by $d$, up to isometries of $\mathbf{I}_{21,2}$ preserving the class $h^2$. Hence it is conventional to denote $K$ by $K_d$, and we can fix without loss of generality the embeddings
$$h^2\in K_d\subset  \mathbf{I}_{21,2},$$
$$\Gamma_d:=K_d^\perp\subset\Gamma.$$

By Hassett \cite[Theorem 1.0.1]{Hassett-SpeicialCubic}),
for a positive integer $d$, there exists a special cubic fourfold of discriminant $d$  if and only if 
\[(*) \quad d>6 \quad \text{ and } \quad d\equiv 0 \text{ or } 2 \pmod 6;\] 
moreover, for such an integer $d$, the locus of special cubic fourfolds of discriminant $d$ is an irreducible divisor in the moduli space $\mathcal{C}$, denoted by $\mathcal{C}_d$. 

The period domains of labelled and marked  cubic fourfolds of discriminant $d$ are 
\begin{align}
	\mathcal{D}^{\lab}_d&=\Omega(\Gamma_d)/\wt{O}(\Gamma, K_d),\\
	\mathcal{D}^{\mar}_d&=\Omega(\Gamma_d)/\wt{O}(\Gamma_d),
	\label{eqn:DefDmard}
\end{align}
where $\Omega(\Gamma_d)$ is defined similarly as in \eqref{eqn:OmegaGamma},  and $\wt{O}(\Gamma, K_d)$ (resp.~$\wt{O}(\Gamma_d)$) is the subgroup of elements of $\wt{O}(\Gamma)$ that preserves (resp.~acts trivially on) the sublattice $K_d$.

Define the moduli spaces of marked and labelled cubic fourfolds of discriminant $d$ respectively as 
$	\mathcal{C}_d^{\mar}:=\mathcal{C}\times_\mathcal{D}\mathcal{D}^{\mar}_d$ and $\mathcal{C}_d^{\lab}:=\mathcal{C}\times_\mathcal{D}\mathcal{D}^{\lab}_d$,
which are normal quasi-projective varieties. There are natural morphisms 
\begin{equation*}
	\xymatrix{
		\mathcal{C}_d^{\mar} \ar@{->>}[r] \ar[d]&\mathcal{C}_d^{\lab}\ar@{->>}[r] \ar[d]& \mathcal{C}_d\ar@{^{(}->}[r]\ar[d]&  \mathcal{C} \ar[d]^{\mathcal{P}}\\
				\mathcal{D}_d^{\mar} \ar@{->>}[r]&\mathcal{D}_d^{\lab}\ar@{->>}[r] & \mathcal{D}_d \ar@{^{(}->}[r]& \mathcal{D}
	}
\end{equation*}
where the  vertical period maps are open immersions, the middle horizonal morphisms are normalizations, while the left horizontal ones are isomorphisms if $d\equiv 2\pmod 6$ and are finite of degree 2 if $d\equiv 0 \pmod 6$ (see \cite[Proposition 5.2.1]{Hassett-SpeicialCubic}).

\subsection{Associated K3} 
\label{subsec:AssociatedK3}
Given a marked special cubic fourfold $(X, K_d\hookrightarrow \HH^{2,2}(X, \ZZ))$ of discriminant $d$, we say a polarized K3 surface $(S, H)$ is 
\textit{Hodge-theoretically associated} to $X$ if there exists a Hodge isometry:
\[\HH^4(X, \ZZ)\supset K_d^\perp\xrightarrow{\simeq} H^\perp(-1)\subset \HH^2(S, \ZZ)(-1),\]
where $(-1)$ is the Tate twist and changes the sign of the quadratic form. Note that by comparing the discriminant, we have $\deg(H^2)=d$, i.e. the K3 surface is of genus $g=\frac{d}{2}+1$.

Hassett \cite[Theorem 5.1.3]{Hassett-SpeicialCubic} showed that a special cubic fourfold of discriminant $d$ has a Hodge-theoretically associated K3 surface if and only if 
\[(**)\quad d \text{ satisfies }(*) \text{ and } d/2 \text{ is not divisible by 9 or any prime number } p \equiv -1\pmod 3.\]
Such $d$'s are called \textit{admissible}, and the first few values are 14, 26, 38, 42, 62, 74, 78, etc.

On the other hand, following \cite{Kuz10}, let
\[\Ku(X):=\left\{E\in \Db(X)~\vert~ \operatorname{RHom}(\mathcal{O}_X(i), E)=0 \text{ for } i=0,1,2\right\}\]
be the \textit{Kuznetsov component} of (the bounded derived category of coherent sheaves of) $X$, which is a 2-Calabi--Yau category.  One says that an algebraic K3 surface $S$ is 
\textit{homologically associated} to $X$ if there is an equivalence of triangulated categories:
	\[\Ku(X)\simeq \Db(S).\]

Both notions of Hodge-theoretically and homologically associated K3 surfaces are very much motivated by the rationality problem of cubic fourfolds, a topic that we do not treat in this paper. However, what is important to us is the following relation between these two notions.

\begin{theorem}[Addington--Thomas {\cite[Theorem 1.1]{AddingtonThomas}}, {\cite[Corollary 1.7]{BLMNPS}}]
	\label{thm:AddingtonThomas}
	Let $d$ be an integer satisfying $(*)$. Let $X\in \mathcal{C}_d$, a special cubic fourfold of discriminant $d$.
	The following conditions are equivalent:
	\begin{enumerate}[(i)]
		\item The integer $d$ is admissible, i.e. it satisfies the condition $(**)$.
		\item $X$ has a homologically associated K3 surface: 
		$\Ku(X)\simeq \Db(S)$ for some projective K3 surface $S$.
	\end{enumerate}
\end{theorem}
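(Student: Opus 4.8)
The plan is to reduce the statement to linear algebra on the Mukai lattice of $\Ku(X)$ and then to convert the resulting Hodge-theoretic data into an actual equivalence using Bridgeland stability. First I would set up the relevant lattice. For a cubic fourfold $X$, the topological K-theory of $\Ku(X)$ carries a Mukai pairing and a weight-$2$ Hodge structure; denote the resulting lattice by $\wt{H}(\Ku(X),\ZZ)$. It has rank $24$ and signature $(4,20)$, is abstractly isometric to the K3 Mukai lattice $U^{\oplus 4}\oplus E_8(-1)^{\oplus 2}$, and contains a canonical primitively embedded positive-definite rank-$2$ lattice $A_2=\langle \lambda_1,\lambda_2\rangle$ of algebraic classes (coming from the images of the $\mathcal{O}_X(i)$ under the projection onto $\Ku(X)$), whose orthogonal complement recovers $\HH^4(X,\ZZ)_0$ up to sign and Tate twist. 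With this in place I would record the purely lattice-theoretic criterion underlying Addington--Thomas: the algebraic part $\wt{H}^{1,1}(\Ku(X),\ZZ)$ contains a nonzero isotropic vector if and only if $d$ is admissible, i.e. satisfies $(**)$. This is a discriminant-form computation for $\Gamma_d=K_d^\perp$ together with the embedding of $A_2$, entirely parallel to Hassett's derivation of $(**)$; the arithmetic conditions ``$9\nmid d/2$ and no prime $p\equiv -1\pmod 3$ divides $d/2$'' are exactly what produces a primitive isotropic class in the rank-$3$ algebraic lattice $\langle\lambda_1,\lambda_2, \kappa\rangle$ spanned together with a generator $\kappa$ of $K_d$.

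\textbf{Direction (ii) $\Rightarrow$ (i).} This direction is essentially formal. An equivalence $\Ku(X)\simeq \Db(S)$ induces a Hodge isometry $\wt{H}(\Ku(X),\ZZ)\cong \wt{H}(S,\ZZ)$ of Mukai lattices. Since $\Db(S)$ contains the skyscraper sheaf of a point, whose Mukai vector $(0,0,1)$ is algebraic and isotropic, the lattice $\wt{H}^{1,1}(\Ku(X),\ZZ)$ acquires a nonzero isotropic class. By the criterion recorded above, $d$ is then admissible.

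\textbf{Direction (i) $\Rightarrow$ (ii).} Here I would first use the criterion to produce a primitive isotropic $v\in \wt{H}^{1,1}(\Ku(X),\ZZ)$, chosen of the correct divisibility so that the associated Brauer obstruction vanishes. I would then invoke the Bridgeland stability conditions on $\Ku(X)$ constructed in \cite{BLMNPS}, pick a stability condition $\sigma$ generic with respect to $v$, and form the moduli space $M_\sigma(\Ku(X),v)$ of $\sigma$-stable objects of class $v$. Because $v$ is primitive and isotropic, this moduli space is a smooth projective holomorphic-symplectic surface, hence a K3 surface $S$, and a (quasi-)universal object gives a Fourier--Mukai functor $\Db(S)\to \Ku(X)$. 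The final step is to check this functor is an equivalence: full faithfulness follows from a Bridgeland--Maciocia/Mukai orthogonality argument for point-objects together with the $2$-Calabi--Yau property of both sides, and essential surjectivity then follows since both categories are indecomposable of the same numerical type.

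\textbf{Main obstacle.} The genuinely hard input is not the lattice theory but the construction and good behavior of stability conditions on $\Ku(X)$ — projectivity and smoothness of the moduli spaces and the existence of a universal family — which is precisely the content of \cite{BLMNPS} and is what upgrades the original Addington--Thomas result to a uniform statement valid for \emph{every} $X\in\mathcal{C}_d$. Indeed, without stability conditions one can only establish the equivalence on a dense open subset of $\mathcal{C}_d$: Addington--Thomas argue from explicit examples (e.g. the Pfaffian/Beauville--Donagi geometry in the $d=14$ case) where a geometric equivalence is known, and then spread it out by showing the locus of $X$ admitting an equivalence is both open and closed in $\mathcal{C}_d$ via a deformation and monodromy argument on the Mukai lattice. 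The remaining delicate point is lattice-theoretic: one must ensure $v$ can be taken primitive of the right divisibility so that $M_\sigma(\Ku(X),v)$ is an \emph{untwisted} K3 rather than a twisted one, and this is exactly where the sharp form of $(**)$ (rather than the weaker $d\equiv 0,2\pmod 6$) is used.
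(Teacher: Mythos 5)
The paper does not actually prove Theorem \ref{thm:AddingtonThomas}: it is quoted from Addington--Thomas and from \cite[Corollary 1.7]{BLMNPS}, so there is no internal proof to measure yours against. That said, your sketch is a faithful account of how the result is proved in those references, and it coincides with the machinery the paper itself deploys when it upgrades the statement to families: Lemma \ref{lemma:IsotropicMukaiVector} and Proposition \ref{prop:AddingtonThomasInFamily} run exactly your direction (i)$\Rightarrow$(ii), namely produce an algebraic isotropic Mukai vector $\v$ together with $\v'$ satisfying $\v\cdot\v'=1$, take a $\v$-generic Bridgeland stability condition, and realize the K3 as the fine moduli space of stable objects of class $\v$ in $\Ku(X)$. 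Your closing remark that the original Addington--Thomas argument only covers a dense (open) subset of $\mathcal{C}_d$ and that \cite{BLMNPS} is what makes the statement uniform over all of $\mathcal{C}_d$ is also accurate, and explains why the paper cites both sources.

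One imprecision is worth fixing. The lattice-theoretic criterion is not ``$\wt{\HH}^{1,1}(\Ku(X),\ZZ)$ contains a nonzero isotropic vector if and only if $d$ is admissible,'' but rather ``it contains a primitively embedded hyperbolic plane $U$ if and only if $d$ is admissible'' (Addington--Thomas, Theorem 3.1). The mere existence of an isotropic algebraic class is the weaker condition characterizing an associated \emph{twisted} K3 surface and holds for a strictly larger set of discriminants. Concretely, in (ii)$\Rightarrow$(i) you should transport not just the Mukai vector $(0,0,1)$ of a skyscraper sheaf but the full copy of $U$ spanned by $(0,0,1)$ and $(1,0,0)$, both of which are always algebraic on a K3; and in (i)$\Rightarrow$(ii) it is the companion vector $\v'$ with $\v\cdot\v'=1$ that kills the Brauer obstruction and makes the moduli space a fine, untwisted K3. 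You clearly know this, since you raise the divisibility issue at the end, but the criterion as you recorded it does not encode it, so the two directions as literally written prove slightly less than the theorem claims.
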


The arguments in \cite{AddingtonThomas} and \cite{BLMNPS} actually show that assuming $(i)$, there is a polarized K3 surface of degree $d$ homologically associated to $X$.

Essentially by taking the characteristic classes of the Fourier--Mukai kernel in $(ii)$, 
B\"ulles \cite{Buelles} established the following relation between the motive of a special cubic fourfold and the motive of its associated K3 surface. 
This can be seen as a motivic lifting of the result of Addington and Thomas {\cite[Theorem 1.2]{AddingtonThomas}}. See also \cite[Theorem 3]{FV2} for a different proof resulting in a stronger version taking into account the quadratic space structure.

\begin{theorem}[B\"ulles {\cite[Theorem 0.4]{Buelles}}]
	\label{thm:Buelles}
	Given a special cubic fourfold $X\in \mathcal{C}_d$ with $d$ satisfying $(**)$, there exist a polarized K3 surface $(S, H)$ of degree $d$ and an isomorphism in the category of rational Chow motives $\CHM$:
	\begin{equation}
		\h(X)\simeq \h(S)(-1)\oplus \1\oplus \1(-2)\oplus \1(-4).
	\end{equation}
	In particular, there is an algebraic cycle $Z\in \CH^3(X\times S)$ which  induces an isomorphism of rational Chow groups:
	\begin{equation}
		\label{eqn:CubicK3Chow}
		\CH_1(X)_{\hom}\simeq \CH_0(S)_{\hom}.
	\end{equation}
\end{theorem}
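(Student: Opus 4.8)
The plan is to promote the derived equivalence furnished by Theorem~\ref{thm:AddingtonThomas} to an isomorphism of rational Chow motives, by passing from Fourier--Mukai kernels to their Mukai vectors, read as correspondences. First I would invoke Theorem~\ref{thm:AddingtonThomas} together with the refinement recorded immediately after it, to produce a polarized K3 surface $(S,H)$ of degree $d$ and an exact equivalence $\Ku(X)\simeq\Db(S)$. Combined with Kuznetsov's semiorthogonal decomposition $\Db(X)=\langle\Ku(X),\OO_X,\OO_X(1),\OO_X(2)\rangle$, this realizes $\Db(X)$ as assembled from $\Db(S)$ and the three exceptional line bundles $\OO_X(i)$. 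At the level of (even) cohomology the ranks already match the desired answer: $b_0+b_2+b_4+b_6+b_8=1+1+23+1+1=27=24+3$, where $24=\operatorname{rank}\HH^\ast(S)$ accounts for the $\Ku(X)\simeq\Db(S)$ part and $3$ for the exceptional collection.

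Next, by Orlov's representability theorem the fully faithful functor $\Db(S)\xrightarrow{\sim}\Ku(X)\hookrightarrow\Db(X)$ is of Fourier--Mukai type, with a kernel $\EE\in\Db(S\times X)$; let $\EE'$ be the kernel of the composite $\Db(X)\to\Ku(X)\xrightarrow{\sim}\Db(S)$ of projection and a quasi-inverse. I would then form the Mukai vectors $v(-)=\operatorname{ch}(-)\sqrt{\operatorname{td}}$ and extract from $v(\EE)\in\CH^\ast(S\times X)_\QQ$ its degree-$3$ component, a correspondence inducing $\h(S)(-1)\to\h(X)$ (its transpose is the cycle $Z\in\CH^3(X\times S)$ of the statement), together with the analogous correspondence from $v(\EE')$ going the other way. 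The crucial input is Grothendieck--Riemann--Roch: the Mukai vector is compatible with convolution of kernels. Since the round trip $\Db(S)\to\Ku(X)\to\Db(S)$ is the identity, one has $\EE'\ast\EE\cong\OO_{\Delta_S}$, which translates into the two correspondences composing to the identity on $\h(S)$; dually $\EE\ast\EE'$ is the kernel of the projector onto $\Ku(X)\subset\Db(X)$, whose Mukai vector is $\Delta_X$ minus the rank-one idempotents attached to the $\OO_X(i)$.

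Tracking the Chern characters $\operatorname{ch}(\OO_X(i))=e^{i h}$ for $i=0,1,2$ then shows that those three idempotents cut out precisely the Tate summands $\1\oplus\1(-2)\oplus\1(-4)$, i.e. the algebraic classes sitting in degrees $0,4,8$ that are not already present in $\h(S)(-1)=\1(-1)\oplus\h^2(S)(-1)\oplus\1(-3)$. Consequently the two correspondences realize $\h(S)(-1)$ as the complementary direct summand, yielding $\h(X)\simeq\h(S)(-1)\oplus\1\oplus\1(-2)\oplus\1(-4)$ in $\CHM$. For the final assertion I would apply $\Hom_{\CHM}(\1(-3),-)$: this identifies $\CH_1(X)=\CH^3(X)$ with $\CH^2(S)$ up to the contribution of the Tate summands, which is spanned by $h^3$ and is detected cohomologically. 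As the Tate motives carry no homologically trivial cycles, restricting to homologically trivial classes produces the isomorphism $\CH_1(X)_{\hom}\simeq\CH_0(S)_{\hom}$ induced by $Z$.

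The main obstacle is the bookkeeping hidden in the Mukai vector: one must check that the $\operatorname{td}$-corrections inside $v(-)$ do not obstruct the relevant graded components from being mutually inverse, so that the leading-degree part of the correspondence genuinely induces the isomorphism on $\h(S)(-1)$ while the lower-degree corrections either vanish on the appropriate summand or are absorbed into the Tate part. Put differently, the heart of the argument is the Orlov-type principle that a Fourier--Mukai equivalence whose semiorthogonal complement is generated by an exceptional collection lifts to an isomorphism of rational Chow motives up to explicit Tate twists; making this precise is where the real work lies, whereas the identification of the three twists is a routine $\operatorname{ch}$-computation.
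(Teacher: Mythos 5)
Your overall strategy is the correct one, and it coincides with the proof the paper actually relies on: Theorem \ref{thm:Buelles} is not proved in this paper but imported from B\"ulles, whose argument is precisely ``take Mukai vectors of the Fourier--Mukai kernels and read their components as correspondences.'' Your use of Theorem \ref{thm:AddingtonThomas} (with the refinement giving a \emph{polarized} degree-$d$ K3 surface), the choice of $Z=v_3(\EE)$ and of $v_3(\EE^R)$ for the inverse, and the matching of the three Tate summands with the exceptional collection $\OO_X,\OO_X(1),\OO_X(2)$ all agree with what the paper records (cf.\ Remark \ref{rmk:ChoiciOfCycle}).

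However, the step you postpone to your last paragraph is not ``bookkeeping'': it is the entire mathematical content of the theorem, and as written your argument has a genuine gap there. Grothendieck--Riemann--Roch does give $v(\EE^R)\circ v(\EE)=v(\OO_{\Delta_S})$, but composition of correspondences does \emph{not} respect the decomposition of $v$ into codimension-graded pieces: the component of $v(\EE^R)\circ v(\EE)$ living in the codimension of $\Delta_S$ is a sum of terms $v_i(\EE^R)\circ v_j(\EE)$ over several pairs $(i,j)$, of which $v_3(\EE^R)\circ v_3(\EE)$ is only one; moreover $v(\OO_{\Delta_S})=\Delta_{S*}\bigl(\sqrt{\operatorname{td}_S}\bigr)$ is not the class of the diagonal on the nose. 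So one cannot directly conclude that the two degree-$3$ correspondences are mutually inverse, which is exactly the assertion you need. B\"ulles closes this in two stages: first, the correction $\Delta_{S*}\bigl(\sqrt{\operatorname{td}_S}-1\bigr)$ strictly raises codimension and is therefore nilpotent as a self-correspondence, so $v(\OO_{\Delta_S})$ is invertible and $\h(S)$ is exhibited as a direct summand of $\bigoplus_{k=0}^{2}\h(X)(k-1)$ (three twists, because $\dim S=2$); second, this is refined to the stated form by comparing the refined Chow--K\"unneth decompositions of the cubic fourfold and of the K3 surface and invoking the computation (Kahn--Murre--Pedrini) of $\Hom_{\CHM}$ between the transcendental motive of a surface and Tate motives, which kills all the cross terms and leaves $v_3(\EE^R)\circ v_3(\EE)$ acting as the identity on the transcendental summand. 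Without this two-step argument, the claim that the degree-$3$ components split off $\h(S)(-1)$ is asserted rather than proved. Your derivation of \eqref{eqn:CubicK3Chow} from the motivic isomorphism, by contrast, is fine.
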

\begin{remark}
	\label{rmk:ChoiciOfCycle}
The original proof in \cite{Buelles} shows that the cycle $Z$ can be chosen to be the codimension-3 component of the Mukai vector of the Fourier--Mukai kernel $\mathcal{E}$ inducing the equivalence between $\Ku(X)$ and $\Db(S)$:
\[Z=v_3(\mathcal{E}).\]
The proof in \cite{FV2} actually shows that $Z':=v_3(\mathcal{E}^R)$ gives the inverse of the isomorphism \eqref{eqn:CubicK3Chow}, where $\mathcal{E}^R:= \mathcal{E}^\vee\otimes p_X^*\omega_X[4]$ is the Fourier--Mukai kernel of the right adjoint. 
\end{remark}

\subsection{Linking two Franchetta properties}
\label{subsect:LinkCubicK3}
The main purpose of this section is the following result, which for an admissible $d$, transforms the generalized Franchetta conjecture \ref{conj:GFC} for degree $d$ K3 surfaces into the Franchetta property for \textit{marked} special cubic fourfolds of discriminant $d$.  Denote by $\mathcal{C}_{d,1}^{\mar}\to \mathcal{C}_{d}^{\mar}$ the universal family of cubic fourfolds and by $\mathcal{F}_{g,1}\to \mathcal{F}_g$ the universal family of K3 surfaces.
\begin{theorem}
	\label{thm:GFCK3andCubic}
	Let $d$ be an integer satisfying the condition $(**)$. Let $g=\frac{d}{2}+1$.
	The Franchetta property for codimension-2 cycles for $\mathcal{F}_{g,1}\to \mathcal{F}_g$ is equivalent to the Franchetta property for codimension 3-cycles for  $\mathcal{C}_{d,1}^{\mar}\to \mathcal{C}_{d}^{\mar}$.
\end{theorem}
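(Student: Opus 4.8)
The plan is to realise both families over one and the same base and to transport cycles between them by a relative correspondence that spreads out B\"ulles' fibrewise isomorphism; the Franchetta property will then be read off, on each side, as the vanishing of the homologically trivial generically defined cycles. First I would produce a common base. The Hodge-theoretic association $K_d^\perp\xrightarrow{\simeq}H^\perp(-1)$ identifies the marked period domain $\mathcal{D}_d^{\mar}=\Omega(\Gamma_d)/\wt{O}(\Gamma_d)$ with the period domain of primitively polarized K3 surfaces of degree $d=2g-2$ (here the passage to \emph{marked} cubics is exactly what matches the stabiliser $\wt{O}(\Gamma_d)$ with the stable orthogonal group governing $\mathcal{F}_g$). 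Using Voisin's Torelli theorem for cubic fourfolds and the global Torelli theorem for K3 surfaces, this makes $\mathcal{C}_d^{\mar}$ and $\mathcal{F}_g$ birational, so there is a nonempty Zariski open subset $U$ which is a common open of both. By Remark \ref{rmk:Subfamily} the Franchetta property of each family is unchanged upon restricting the base to $U$, so it suffices to compare the restricted universal families $\mathcal{X}\to U$ of cubic fourfolds and $\mathcal{S}\to U$ of K3 surfaces.

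Next I would spread out the correspondence of Theorem \ref{thm:Buelles}. After shrinking $U$ if necessary, the fibrewise cycle $Z_b=v_3(\mathcal{E})\in\CH^3(X_b\times S_b)$ and its inverse $Z_b'=v_3(\mathcal{E}^R)$ from Remark \ref{rmk:ChoiciOfCycle} extend to relative correspondences $\mathcal{Z},\mathcal{Z}'\in\CH^3(\mathcal{X}\times_U\mathcal{S})$, by the standard spreading out of a cycle on the very general fibre. Being relative over $U$, the induced maps commute with Gysin restriction to fibres, so for any global $w\in\CH^3(\mathcal{X})$ one has $\mathcal{Z}_\ast(w)|_{S_b}=(Z_b)_\ast\bigl(w|_{X_b}\bigr)$, and symmetrically for $\mathcal{Z}'$. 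Consequently $\mathcal{Z}_\ast$ carries $\GDCH^3_U(X_b)$ into $\GDCH^2_U(S_b)$ and $\mathcal{Z}'_\ast$ carries $\GDCH^2_U(S_b)$ into $\GDCH^3_U(X_b)$, while on the very general fibre $(Z_b)_\ast$ and $(Z_b')_\ast$ restrict to the mutually inverse isomorphisms $\CH_1(X_b)_{\hom}\xrightarrow{\simeq}\CH_0(S_b)_{\hom}$ supplied by B\"ulles.

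Finally I would translate each Franchetta property into a vanishing statement. For the K3 family the Franchetta property for codimension-$2$ cycles is the injectivity of $\GDCH^2_U(S)\to\HH^4(S,\QQ)$, i.e. $\GDCH^2_U(S)_{\hom}=0$; for the cubic family it is the injectivity of $\GDCH^3_U(X)\to\HH^6(X,\QQ)$, i.e. $\GDCH^3_U(X)_{\hom}=0$ (using $\CH^3(X)=\CH_1(X)$). Since $\mathcal{Z}_\ast,\mathcal{Z}'_\ast$ preserve both homological triviality and the generically defined subgroups, and restrict to mutually inverse isomorphisms on $\CH_1(X_b)_{\hom}$ and $\CH_0(S_b)_{\hom}$, they induce mutually inverse isomorphisms $\GDCH^3_U(X)_{\hom}\cong\GDCH^2_U(S)_{\hom}$. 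Hence one group vanishes if and only if the other does, which is precisely the asserted equivalence.

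The main obstacle I anticipate is the spreading-out step together with its compatibility properties: one must guarantee that $\mathcal{Z}$ and $\mathcal{Z}'$ can be chosen over a single nonempty $U$, that they genuinely recover B\"ulles' isomorphism on the (very general) fibre rather than only up to some correction, and, most delicately, that the associated maps on Chow groups respect the images $\ima(\CH^\ast(\mathcal{X})\to\CH^\ast(X))$ and $\ima(\CH^\ast(\mathcal{S})\to\CH^\ast(S))$. This compatibility of a relative correspondence with Gysin restriction is exactly what upgrades the fibrewise statement of Theorem \ref{thm:Buelles} to an identification of the generically defined homologically trivial cycles, and is the crux of the argument.
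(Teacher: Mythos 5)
Your overall architecture coincides with the paper's: identify $\mathcal{C}_d^{\mar}$ and $\mathcal{F}_g$ birationally via the period domains, work over a common open base, transport generically defined cycles through a relative correspondence inducing B\"ulles' fibrewise isomorphism $\CH_1(X_b)_{\hom}\simeq\CH_0(S_b)_{\hom}$, and read off each Franchetta property as the vanishing of the homologically trivial part of the generically defined cycles. The commutative square with the Gysin restrictions at the end is exactly the paper's closing argument.

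The gap is in the step you yourself flag as the main obstacle: producing $\mathcal{Z},\mathcal{Z}'$ over the base by ``standard spreading out of a cycle on the very general fibre.'' Spreading out B\"ulles' cycle $Z_b=v_3(\mathcal{E}_b)$ from a single very general fibre first transports it to the geometric generic fibre, where it is a priori only defined over a finite extension $L$ of $\C(U)$; it therefore spreads out over a generically finite cover $U'\to U$, not over an open subset of $U$. This is fatal for the equivalence as you run it: unlike restriction to an open subset, base change along $U'\to U$ can strictly enlarge the generically defined cycles (the total space $\mathcal{S}_{U'}$ carries classes not pulled back from $\mathcal{S}$), so the Franchetta hypothesis for $\mathcal{F}_{g,1}/\mathcal{F}_g$ no longer forces $\GDCH^2_{U'}(S_b)_{\hom}=0$, which is what your diagram needs. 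Averaging the Galois conjugates to descend to $\QQ$-coefficients over $U$ does not help, since a sum of conjugate correspondences need not remain injective on $\CH_1(X_b)_{\hom}$. The paper fills precisely this hole by constructing the Fourier--Mukai kernel \emph{in families over the base itself}: Lemma \ref{lemma:IsotropicMukaiVector} produces monodromy-invariant, fibrewise algebraic Mukai vectors $\v,\v',\w$ in the local system $\mathbb{H}$ (this is where the restriction to \emph{marked} cubics and the group $\wt{O}(\Gamma_d)$ is really used); Proposition \ref{prop:AddingtonThomasInFamily} then realizes $\mathcal{S}$ as the fine relative moduli space $\mathcal{M}_{\underline{\sigma}}(\v)$ of Bridgeland-stable objects in $\Ku(\mathcal{X}/\mathcal{F}_g^\circ)$, with a universal object $\mathcal{E}\in\Db(\mathcal{X}\times_{\mathcal{F}_g^\circ}\mathcal{S})$; and Corollary \ref{cor:MotiveCubicK3} takes $Z=v_3(\mathcal{E})$ and $Z'=v_3(\mathcal{E}^R)$ as honest relative cycles over $\mathcal{F}_g^\circ$. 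Without some such global construction (or an argument that the kernel descends to $\C(U)$ itself), your proof does not close.
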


\begin{remark}
	Theorem \ref{thm:main-3} is a consequence of Theorem \ref{thm:GFCK3andCubic}, since when $d\equiv 2 \pmod 6$, $\mathcal{C}^{\mar}_d\to \mathcal{C}_d$ is the normalization map, hence does not affect the Franchetta property. On the other hand, if $d\equiv 0 \pmod 6$, then $\mathcal{C}^{\mar}_d\to \mathcal{C}_d$ is of degree 2. Hence by Remark \ref{rmk:Subfamily}, the Franchetta property for $\mathcal{F}_g$ implies the Franchetta property for $\mathcal{C}_d$.
\end{remark}

To prove Theorem \ref{thm:GFCK3andCubic}, it is crucial to adapt Addington--Thomas' Theorem \ref{thm:AddingtonThomas} and B\"ulles' Theorem \ref{thm:Buelles} into their family version. Let 
$\Lambda:=E_8(-1)^{\oplus 2}\oplus U^{\oplus 3}$ be the \textit{K3 lattice}. Let 
$\Lambda_d:=E_8(-1)^{\oplus 2}\oplus U^{\oplus 2}\oplus \ZZ(-d)$ be the abstract lattice underlying the second primitive cohomology of a polarized K3 surface of degree $d$.
Hassett's condition $(**)$ mentioned above is equivalent to the existence of an isometry, up to a sign, between the lattices $\Gamma_d$ and $\Lambda_d$.

For an integer $d$ satisfying $(**)$, upon fixing an isometry $\epsilon: \Gamma_d\xrightarrow{\simeq} \Lambda_d(-1)$, we have an induced isomorphism between the period domain of marked special cubic fourfolds of discriminant $d$ and the period domain of polarized K3 surfaces of degree $d$:
\[\mathcal{D}^{\mar}_d=\Omega(\Gamma_d)/\wt{O}(\Gamma_d)\xrightarrow{\simeq} \mathcal{N}_{d}=\Omega(\Lambda_d)/\wt{O}(\Lambda_d),\]
which gives rise to a birational isomorphism (depending on the choice of $\epsilon$) between the moduli space of marked cubic fourfolds of discriminant $d$ and the moduli space of polarized K3 surfaces of genus $g:=\frac{d}{2}+1$:
\begin{equation}
\label{eqn:phi}
\xymatrix{\phi\colon	\mathcal{C}^{\mar}_d \ar@{-->}[r]^{\simeq}& \mathcal{F}_{g}.
}
\end{equation}
The rational map $\phi$ sends a marked cubic fourfold to its Hodge-theoretically associated polarized K3 surface.

Let $\mathcal{F}^\circ_g$ be a Zariski open subset of $\mathcal{F}_g$ where $\phi$ is an isomorphism. 
The restrictions  over $\mathcal{F}_{g}^\circ$  of  the universal families $\mathcal{C}_{d,1}^{\mar}$ and $\mathcal{F}_{g,1}$ are denoted by $\mathcal{X}\to \mathcal{F}_{g}^\circ$ and $\mathcal{S}\to \mathcal{F}_{g}^\circ$ respectively.

For a cubic fourfold $X$, Addington--Thomas \cite[Definition 2.2]{AddingtonThomas} equipped the topological K-theory of the Kuznetsov component $\Ku(X)$ with a lattice structure via the Euler pairing, abstractly isometric to the Mukai lattice $\wt\Lambda:=E_8^{\oplus 2}\oplus U^{\oplus 4}$, and a natural weight-2 Hodge structure of K3 type via the Mukai-vector map:
\[v\colon \operatorname{K}_{\operatorname{top}}(\Ku(X)) \hookrightarrow \HH^*(X, \QQ).\] 
The resulting \textit{Mukai lattice} of $\Ku(X)$, denoted by $\wt{\HH}(\Ku(X), \ZZ)$, 
always contains the $A_2(-1)$-lattice $\langle\lambda_1, \lambda_2\rangle$, where $\lambda_i$ is the class of  $p(\mathcal{O}_{\operatorname{line}}(i))$, and $p\colon \Db(X)\to \Ku(X)$ is the left adjoint of the inclusion functor $\Ku(X)\hookrightarrow \Db(X)$. Identifying $\wt{\HH}(\Ku(X), \ZZ)$ with its image via $v$, the Mukai vectors $\lambda_i$ are given explicitly as follows, denoted by the same notation:
\begin{align*}
\lambda_1&=3 +\frac{5}{4}h-\frac{7}{32}h^2-\frac{77}{384}h^3+\frac{41}{2048}h^4\,;\\
\lambda_2&=-3-\frac{1}{4}h+\frac{15}{32}h^2+\frac{1}{384}h^3-\frac{153}{2048} h^4.
\end{align*}

Now for the family $\mathcal{X}\to \mathcal{F}^\circ_g$, we have the local system of Mukai lattices over $\mathcal{F}_{g}^\circ$.
\[\mathbb{H}:=\left\{\wt{\HH}(\Ku(X_t), \ZZ)\right\}_{t\in \mathcal{F}_{g}^\circ}\]
\begin{lemma}
	\label{lemma:IsotropicMukaiVector}
	There exist  sections \footnote{By definition, a section of a local system is flat and global (i.e.~monodromy invariant).} $\v, \v', \w$ of the local system $\mathbb{H}$ such that they are fiberwise algebraic and satisfy $\v^2=0$, $\v\cdot \v'=1$, $\v\cdot \w=0$, and $\w^2=-d$.
\end{lemma}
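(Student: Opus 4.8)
The plan is to reduce the statement to an elementary computation inside the rank-$3$ \emph{algebraic} part of the Mukai lattice, once we know that this algebraic part forms a \emph{constant} sub-local-system of $\mathbb{H}$ over $\mathcal{F}_g^\circ$ whose generators can be taken monodromy-invariant. So the work is front-loaded into identifying the algebraic lattice and checking globality; the numerical relations are then immediate.

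First I would pin down the algebraic part fiberwise. For very general $t\in\mathcal{F}_g^\circ$ the associated K3 surface $S_t$ has Picard rank one, with $\mathrm{NS}(S_t)=\ZZ H$ and $H^2=d$. Via the equivalence $\Ku(X_t)\simeq\Db(S_t)$ furnished by Theorem~\ref{thm:AddingtonThomas}, the integral $(1,1)$-part of $\wt{\HH}(\Ku(X_t),\ZZ)$ is identified with the algebraic Mukai lattice of $S_t$, namely $\HH^0(S_t,\ZZ)\oplus\mathrm{NS}(S_t)\oplus\HH^4(S_t,\ZZ)$. Here $\HH^0\oplus\HH^4$ is a hyperbolic plane $U$ for the Mukai pairing and $\mathrm{NS}(S_t)=\ZZ H$ contributes $\langle d\rangle$, orthogonal to $U$. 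Since $\mathbb{H}$ is fiberwise isometric to $\wt\Lambda$ (with the sign convention of the cubic side, under which $\langle\lambda_1,\lambda_2\rangle\cong A_2(-1)$), this algebraic lattice is $U\oplus\langle -d\rangle$, the $H$-direction now carrying square $-d$.

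Next I would promote these classes to global flat sections. On the K3 side the polarization $H$ is the relative polarization of the universal family over $\mathcal{F}_g^\circ$, hence monodromy-invariant, and the unit and point classes spanning $U$ are canonical, hence also invariant; thus the monodromy acts trivially on $U\oplus\langle -d\rangle$. Intrinsically on the cubic side this can be seen as follows: $\lambda_1,\lambda_2$ are canonically defined via lines on each $X_t$, hence are flat sections spanning $A_2(-1)$, while the marking rigidifying $\mathcal{C}_d^{\mar}$ fixes the remaining algebraic generator. To guarantee that the \emph{isotropic} generator (and its dual) are genuinely global, I would invoke the family version of Addington--Thomas and of the Bridgeland theory of \cite{BLMNPS}: for admissible $d$ there is a primitive isotropic algebraic class defined uniformly over $\mathcal{F}_g^\circ$ realizing $S_t$ as a moduli space of objects in $\Ku(X_t)$. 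Either route yields a constant rank-$3$ sub-local-system isometric to $U\oplus\langle -d\rangle$ with explicit flat, fiberwise algebraic generators.

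Finally, choosing a hyperbolic basis $e,f$ of the $U$-summand (so $e^2=f^2=0$, $e\cdot f=1$) and a generator $g$ of $\langle -d\rangle$, I set $\v:=e$, $\v':=f$ and $\w:=g$. Then $\v^2=0$, $\v\cdot\v'=1$, $\v\cdot\w=0$ and $\w^2=-d$, and all three lie in the algebraic lattice, hence are fiberwise algebraic. The main obstacle is not this closing lattice arithmetic but the preceding globality statement: fiberwise the isotropic vector, its dual and the polarization class exist for classical reasons, and the real content is that the associated-K3 construction is carried out uniformly in families over $\mathcal{F}_g^\circ$, so that these vectors descend to monodromy-invariant sections of $\mathbb{H}$. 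This is exactly where the admissibility of $d$ and the families version of \cite{AddingtonThomas,BLMNPS} enter.
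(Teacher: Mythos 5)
Your proposal contains, as its ``intrinsic cubic-side'' aside, essentially the paper's actual proof: the monodromy of the marked family acts trivially on the rank-3 algebraic sublattice $L_d=\overline{\langle\lambda_1,\lambda_2\rangle\oplus\ZZ v_d}$ (because $\lambda_1,\lambda_2$ are canonical and the marking fixes $K_d$ pointwise, by the very definition of $\mathcal{D}_d^{\mar}$ in \eqref{eqn:DefDmard}), and $L_d\simeq U\oplus\ZZ(-d)$, whence the three vectors. Note that once the monodromy acts trivially on $L_d$, \emph{every} vector of $L_d$ is automatically a flat global section; so your worry that the isotropic generator and its dual need a separate globality argument is unfounded, and the extra step you add to address it is where the proof goes wrong. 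Invoking ``the family version of Addington--Thomas and of the Bridgeland theory of \cite{BLMNPS}'' to produce a uniformly defined primitive isotropic algebraic class is circular in the logical structure of the paper: that family version is Proposition \ref{prop:AddingtonThomasInFamily}, and it is \emph{deduced from} the present lemma, since the relative moduli space $\mathcal{M}_{\underline\sigma}(\v)$ of \cite[Theorem 29.4]{BLMNPS} takes the flat sections $\v,\v'$ as input. Similarly, transporting monodromy-invariance of $1$, $H$ and the point class from the K3 side to $\mathbb{H}$ presupposes that the fiberwise Hodge isometries assemble into a flat family identification, which is again the family statement you are not yet entitled to.

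The remaining difference from the paper is how the abstract isometry $L_d\simeq U\oplus\ZZ(-d)$ is obtained. The paper gets it purely lattice-theoretically from admissibility of $d$, citing \cite[Theorem 3.1]{AddingtonThomas} or \cite[Lemma 1.10, Remark 1.11]{Huy2019Lecture}. You instead read it off from the fiberwise equivalence $\Ku(X_t)\simeq\Db(S_t)$ of Theorem \ref{thm:AddingtonThomas} at a very general $t$, identifying $L_d$ with $\HH^0\oplus\mathrm{NS}(S_t)\oplus\HH^4$ of the associated K3. This is legitimate and non-circular (it only uses the fiberwise theorem), but it requires the refinement that the homologically associated K3 can be taken to be polarized of degree exactly $d$ and of Picard rank $1$ at the very general point; the direct lattice-theoretic route avoids this. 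In short: keep your cubic-side monodromy argument, supply the lattice isometry by one of the two routes just described, and delete the appeal to the family version of Addington--Thomas.
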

\begin{proof}
	This is essentially \cite[Theorem 3.1]{AddingtonThomas}. Indeed, by the definition of $\mathcal{D}_d^{\mar}$ in \eqref{eqn:DefDmard}, we see that the monodromy invariant subspace of $\HH^*(X, \QQ)$ contains $\langle 1, h, h^2, h^3, h^4 \rangle+K_d$, whose inverse image by the Mukai-vector map $v$, denoted by $L_d$, is the saturation of the lattice $\langle\lambda_1, \lambda_2\rangle\oplus \ZZ v_d$, where $v_d$ is the generator of the orthogonal complement of $h^2$ in $K_d$:
	\[K_d=\overline{\ZZ h^2\oplus \ZZ v_d}\,,\quad L_d=\overline{\langle\lambda_1, \lambda_2\rangle\oplus \ZZ v_d}.\]
	 All classes in $L_d$ are fiberwise Hodge, hence algebraic. By construction, $L_d$ is a rank 3 primitive sublattice in $\wt{\Lambda}$ of discriminant $d$ such that 
	\[\Gamma\supset K_d^\perp=:\Gamma_d=L_d^\perp\subset \wt{\Lambda}\]
    By \cite[Theorem 3.1, $(1)\Rightarrow (2)$]{AddingtonThomas} , or more directly, by \cite[Lemma 1.10, Remark 1.11]{Huy2019Lecture}, there is an isomorphism
   \[L_d\xrightarrow{\simeq} U\oplus \ZZ(-d).\]
   One can then take $\v$, $\v'$ to be the standard basis of $U$ and $\w$ to be the generator of $\ZZ(-d)$.
\end{proof}

\begin{example}
Let us give the explicit formulas of the vectors in the two cases, namely $g=14$ and $22$.
\begin{itemize}
	\item When $g=14$, or equivalently $d=26$, the monodromy invariant part of the local system  $\mathbb{H}$ contains the lattice generated by $\lambda_1, \lambda_2$ and an extra class $\tau$, with the intersection form

\begin{equation}
		\begin{array}{c|ccc}
	&\lambda_1&\lambda_2& \tau\\
	\hline
	\lambda_1&-2&1&0\\
	\lambda_2&1&-2&1\\
	\tau&0&1&8\\
	\end{array}
\end{equation}
Then we take $\v=\lambda_1+3\lambda_2+\tau$, $\v'=\lambda_1$ and $\w=11\lambda_1+22\lambda_2+7\tau$.

\item When $g=22$, or equivalently $d=42$, the monodromy invariant part of the local system  $\mathbb{H}$ contains the lattice generated by $\lambda_1, \lambda_2$ and an extra class $\tau=v_{42}$, with the intersection form

\begin{equation}
\begin{array}{c|ccc}
&\lambda_1&\lambda_2& \tau\\
\hline
\lambda_1&-2&1&0\\
\lambda_2&1&-2&0\\
\tau&0&0&14\\
\end{array}
\end{equation}
Then we take $\v=\lambda_1+3\lambda_2+\tau$, $\v'=\lambda_1$ and $\w=14\lambda_1+28\lambda_2+9\tau$.
\end{itemize}
\end{example}

Now we can extend Addington--Thomas' result Theorem \ref{thm:AddingtonThomas} into the following family version.
\begin{proposition}
	\label{prop:AddingtonThomasInFamily}
	Let $d$ be an integer satisfying $(**)$ and $g=\frac{d}{2}+1$.
	Let $\mathcal{X}$ and $\mathcal{S}$ be the family of cubic fourfolds and K3 surfaces over $\mathcal{F}_g^\circ$ as above. Up to replacing $\mathcal{F}_g^\circ$ by a non-empty Zariski open subset, there exists a relative Fourier--Mukai kernel
	$\mathcal{E}\in \Db(\mathcal{X}\times_{\mathcal{F}_g^\circ}\mathcal{S})$ such that for any $t\in \mathcal{F}_g^\circ$, the Fourier--Mukai transform with kernel $\mathcal{E}_t\in \Db(X_t\times S_t)$ induces an equivalence $\Ku(X_t)\xrightarrow{\simeq}\Db(S_t)$.
\end{proposition}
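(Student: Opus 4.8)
The plan is to realize each Hodge-theoretically associated K3 surface $S_t$ as a moduli space of Bridgeland-stable objects of Mukai vector $\v$ in the Kuznetsov component $\Ku(X_t)$, and to carry out this construction uniformly over the base by invoking the theory of stability conditions in families of \cite{BLMNPS}. The essential input is Lemma \ref{lemma:IsotropicMukaiVector}: the section $\v$ is a primitive, isotropic, fiberwise algebraic and monodromy-invariant class, so it is exactly the Mukai vector that produces K3 surfaces as moduli spaces; the auxiliary section $\v'$ with $\v\cdot\v'=1$ will serve to kill the Brauer obstruction to the existence of a genuine universal family (hence of an honest Fourier--Mukai kernel), while $\w$, with $\w^2=-d$, records the degree-$d$ polarization.

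First I would construct, after shrinking $\mathcal{F}_g^\circ$, a relative Bridgeland stability condition $\sigma$ on the relative Kuznetsov component $\Ku(\mathcal{X}/\mathcal{F}_g^\circ)$ by means of \cite{BLMNPS}, chosen generic with respect to $\v$ so that on every fiber $\v$-semistability coincides with $\v$-stability. Then I would form the relative moduli space $\MM:=M_\sigma(\Ku(\mathcal{X}/\mathcal{F}_g^\circ),\v)\to\mathcal{F}_g^\circ$. Combining the family machinery of \cite{BLMNPS} with the fiberwise statement underlying Theorem \ref{thm:AddingtonThomas} (that for a primitive isotropic $\v$ the moduli space $M_\sigma(\Ku(X_t),\v)$ is a K3 surface), this $\MM$ is a smooth proper family of K3 surfaces over $\mathcal{F}_g^\circ$.

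Because $\v$ is primitive and $\v\cdot\v'=1$ with $\v'$ algebraic, the Brauer obstruction vanishes and there is a genuine universal object $\mathcal{E}\in\Db(\mathcal{X}\times_{\mathcal{F}_g^\circ}\MM)$. Fiberwise, the Fourier--Mukai transform with kernel $\mathcal{E}_t$ is precisely the equivalence $\Ku(X_t)\xrightarrow{\simeq}\Db(\MM_t)$ furnished by Theorem \ref{thm:AddingtonThomas}. It then remains to identify $\MM$ with $\mathcal{S}$ over $\mathcal{F}_g^\circ$. By the Mukai--Yoshioka description of moduli of (complexes of) objects on categories of K3 type, the weight-2 Hodge structure of $\MM_t$ is $\v^\perp/\v$ inside $\wt{\HH}(\Ku(X_t),\ZZ)$, whose transcendental part is $\Gamma_d$, that is, exactly the period of the Hodge-theoretically associated K3 surface $S_t$ defining the point $t$ via $\phi^{-1}$. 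Tracking the polarization induced by $\w$ (of degree $d$), the classifying morphism $\mathcal{F}_g^\circ\to\mathcal{F}_g$ attached to $\MM$ is the open inclusion; hence by the Torelli theorem applied in families $\MM\cong\mathcal{S}$ after a further shrinking, and pulling $\mathcal{E}$ back along this isomorphism yields the desired kernel on $\mathcal{X}\times_{\mathcal{F}_g^\circ}\mathcal{S}$.

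The main obstacle is the construction and control of the relative stability condition and the associated relative moduli space: one must verify that the hypotheses of \cite{BLMNPS} hold over the (shrunk) base, that $\sigma$ can be kept generic with respect to $\v$ on every fiber, and that $\MM$ is representable and smooth proper with K3 fibers throughout. The period-theoretic identification $\MM\cong\mathcal{S}$ is comparatively formal but still demands care in matching the polarizations and in invoking Torelli in a family; as usual one is free to shrink $\mathcal{F}_g^\circ$ by a nonempty Zariski open subset finitely many times, which is harmless for the intended application to the Franchetta property.
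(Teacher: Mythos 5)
Your proposal is correct and follows essentially the same route as the paper: a $\v$-generic relative stability condition from \cite{BLMS}/\cite{BLMNPS}, the relative moduli space $M_{\sigma}(\v)$ which is a family of K3 surfaces, fineness from $\v\cdot\v'=1$, the degree-$d$ polarization from $\w$, identification with $\mathcal{S}$, and the universal object as the kernel (the paper cites \cite[Lemma 33.2]{BLMNPS} for the fiberwise equivalence, where you defer to Theorem \ref{thm:AddingtonThomas}, but this is only a difference in attribution). Your extra care in identifying $M_{\sigma}(\v)$ with $\mathcal{S}$ via periods and Torelli is a reasonable elaboration of what the paper compresses into ``we can therefore identify''.
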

\begin{proof}
	A distinguished connected component of the (numerical) stability manifold of cubic fourfolds is constructed in \cite{BLMS}. Let $\v, \v', \w$ be as in Lemma \ref{lemma:IsotropicMukaiVector}. By \cite[Theorem 29.4]{BLMNPS}, for a $\v$-generic stability condition $\underline{\sigma}$ on $\mathcal{X}$ over $\mathcal{F}_g^\circ$,
	there is a relative moduli space $\mathcal{M}_{\underline{\sigma}}(\v)$  of Bridgeland stable objects in $\Ku(\mathcal{X}/\mathcal{F}_g^\circ)$ with Mukai vector $\v$, which is (up to shrinking $\mathcal{F}_g^\circ$) a relative projective K3 surface  over $\mathcal{F}_g^\circ$. By the existence of the vector $\v'$ with $\v\cdot \v'=1$, this moduli space $\mathcal{M}_{\underline{\sigma}}(\v)$ is fine. The existence of the vector $\w$ with $\w^2=d$ implies that $\mathcal{M}_{\underline{\sigma}}(\v)$ admits a relative polarization over $\mathcal{F}_g^\circ$ of degree $d$. We can therefore identify $\mathcal{S}$ with $\mathcal{M}_{\underline{\sigma}}(\v)$. Let $\mathcal{E}$ be the universal sheaf. Then the corresponding Fourier--Mukai transform is an equivalence by \cite[Lemma 33.2]{BLMNPS}.
\end{proof}

We deduce the following family version of B\"ulles' result.
\begin{corollary}
	\label{cor:MotiveCubicK3}
	Let the notation be as before. Up to shrinking $\mathcal{F}_g^\circ$, there exist cycles 
	$Z \in \CH^3(\mathcal{X}\times_{\mathcal{F}_g^\circ}\mathcal{S})$ and $Z^\prime\in \CH^3(\mathcal{S}\times_{\mathcal{F}_g^\circ}\mathcal{X})$,
	with the property that for any $t\in \mathcal{F}_g^\circ$, the cycles $Z_t, Z'_t\in \CH^3(X_t\times S_t)$ induce mutually inverse  isomorphisms \[\CH_1(X_t)_{\hom}\simeq \CH_0(S_t)_{\hom}.\]
\end{corollary}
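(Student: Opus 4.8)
The plan is to upgrade the relative Fourier--Mukai kernel $\mathcal{E}$ produced in Proposition \ref{prop:AddingtonThomasInFamily} to a relative algebraic cycle, and then to verify that the fiberwise specialization of this cycle recovers B\"ulles' correspondence $Z=v_3(\mathcal{E}_t)$ from Theorem \ref{thm:Buelles} and Remark \ref{rmk:ChoiciOfCycle}. First I would take the relative Mukai vector of $\mathcal{E}\in \Db(\mathcal{X}\times_{\mathcal{F}_g^\circ}\mathcal{S})$ by computing its relative Chern character against the relative Todd class, producing a class in $\CH^\ast(\mathcal{X}\times_{\mathcal{F}_g^\circ}\mathcal{S})_\QQ$, and then extract its component $Z$ living in $\CH^3(\mathcal{X}\times_{\mathcal{F}_g^\circ}\mathcal{S})$ (the relative codimension matching the fiberwise codimension-3 component used by B\"ulles). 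Symmetrically, I would form the kernel of the relative right adjoint $\mathcal{E}^R$ and set $Z'=v_3(\mathcal{E}^R)\in \CH^3(\mathcal{S}\times_{\mathcal{F}_g^\circ}\mathcal{X})$, following the recipe in Remark \ref{rmk:ChoiciOfCycle}.

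The heart of the argument is then purely fiberwise and already done for us: by the compatibility of Grothendieck--Riemann--Roch with base change (Gysin restriction to a fiber commutes with taking Chern characters and Mukai vectors), the restriction $Z_t$ agrees with $v_3(\mathcal{E}_t)$ for the individual Fourier--Mukai kernel $\mathcal{E}_t$, and likewise $Z'_t=v_3(\mathcal{E}_t^R)$. Since Proposition \ref{prop:AddingtonThomasInFamily} guarantees that $\mathcal{E}_t$ induces an equivalence $\Ku(X_t)\xrightarrow{\simeq}\Db(S_t)$ for every $t\in \mathcal{F}_g^\circ$, the cited results of B\"ulles \cite{Buelles} and of \cite{FV2} (recorded in Theorem \ref{thm:Buelles} and Remark \ref{rmk:ChoiciOfCycle}) apply verbatim to each fiber: $Z_t$ and $Z'_t$ induce mutually inverse isomorphisms $\CH_1(X_t)_{\hom}\simeq \CH_0(S_t)_{\hom}$. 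This gives the desired conclusion once one checks that the two pointwise statements assemble from the single relative cycles $Z,Z'$, which they do by construction since $Z_t,Z'_t$ are literally the Gysin restrictions of $Z,Z'$.

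The main obstacle I anticipate is a technical one about the behavior of the relative Mukai vector under restriction to fibers, namely ensuring that forming $v(\mathcal{E})$ relatively and then restricting to a fiber yields exactly $v(\mathcal{E}_t)$ computed on the fiber $X_t\times S_t$. This requires that the relative Todd class and relative Chern character restrict correctly, which is standard provided $\mathcal{X}\times_{\mathcal{F}_g^\circ}\mathcal{S}\to \mathcal{F}_g^\circ$ is smooth projective and the formation of the right adjoint kernel $\mathcal{E}^R=\mathcal{E}^\vee\otimes p_{\mathcal{X}}^\ast\omega_{\mathcal{X}/\mathcal{F}_g^\circ}[\dim]$ is compatible with base change; one may need to shrink $\mathcal{F}_g^\circ$ once more so that $\mathcal{E}$ is, say, a single coherent sheaf flat over the base (as it is the universal sheaf of a fine relative moduli space in the proof of Proposition \ref{prop:AddingtonThomasInFamily}), which removes any subtlety with perfect complexes and guarantees fiberwise compatibility. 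With $\mathcal{E}$ flat and the base smooth, the projection formula and base-change for Chow groups make the restriction identities routine, and no genuinely new input beyond Theorem \ref{thm:Buelles} is needed.
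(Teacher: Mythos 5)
Your proposal is correct and follows essentially the same route as the paper: the paper's proof likewise takes $Z:=v_3(\mathcal{E})$ and $Z':=v_3(\mathcal{E}^R)$ for the relative kernel $\mathcal{E}$ from Proposition \ref{prop:AddingtonThomasInFamily} and invokes Theorem \ref{thm:Buelles} together with Remark \ref{rmk:ChoiciOfCycle} fiberwise. The extra care you take about compatibility of the relative Mukai vector with Gysin restriction to fibers is a reasonable elaboration of what the paper leaves implicit.
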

\begin{proof}
Let $\mathcal{E}$ be as in Proposition \ref{prop:AddingtonThomasInFamily} and let $\mathcal{E}^R$ be the relative Fourier--Mukai kernel of the right adjoint. Then 
 Theorem \ref{thm:Buelles} and Remark \ref{rmk:ChoiciOfCycle} show that  $Z:=v_3(\mathcal{E})$ and $Z':=v_3(\mathcal{E}^R)$ induce fiberwise inverse isomorphisms between $\CH_1(X_t)_{\hom}$ and $\CH_0(S_t)_{\hom}$.
\end{proof}

\begin{remark}
	By applying the argument (Manin's identity principle) as in the proof of \cite[Theorem 0.4]{Buelles} to the relative Fourier--Mukai kernel $\mathcal{E}$ as well as its right adjoint, we can also show that there is an isomorphism between $\h(\mathcal{X})$ and $\h(\mathcal{S})(-1)\oplus \1\oplus \1(-2)\oplus \1(-4)$, as relative Chow motives  over $\mathcal{F}_g^\circ$.
\end{remark}

\begin{proof}[Proof of Theorem \ref{thm:GFCK3andCubic} (hence Theorem \ref{thm:main-3})]
	Given a point $t\in \mathcal{F}_g^\circ$,
consider the following commutative diagram where the vertical arrows are Gysin restriction maps:
	\begin{equation}
		\xymatrix{
	\CH^3(\mathcal{X}) \ar[d]^{r_1}\ar[r]^{Z_*}& \CH^2(\mathcal{S})\ar[d]^{r_2}\\
	\CH^3(X_t)\ar[r]^{Z_{t,*}} & \CH^2(S_t).
	}
	\end{equation}
	Assume first the Franchetta property  for $\mathcal{F}_{g,1}/\mathcal{F}_g$. For any $\alpha\in \ima(r_1)\cap \CH^3(X_t)_{\hom}$, the above diagram shows that $Z_{t, *}(\alpha)\in \CH^2(S_t)_{\hom}\cap \ima(r_2)$, hence is zero by assumption. By Corollary \ref{cor:MotiveCubicK3}, $Z_{t,*}$ is an isomorphism, thus $\alpha=0\in \CH^3(X_t)$, i.e. the Franchetta property is satisfied for $\mathcal{C}^{\mar}_{d,1}/\mathcal{C}^{\mar}_d$.
	
	Similarly, by using $Z'$ in Corollary \ref{cor:MotiveCubicK3}, one can show that the Franchetta property for $\mathcal{C}^{\mar}_{d,1}/\mathcal{C}^{\mar}_d$ implies that for $\mathcal{F}_{g,1}/\mathcal{F}_g$.
\end{proof}

\begin{proof}[Proof of Theorem \ref{thm:main-2} for $d=14, 38$]
	Since 14 and 38 are both $\equiv  2 \pmod 6$, Theorem \ref{thm:main-3} applies. Therefore, the Franchetta property for the universal family over $\mathcal{C}_{14}$ and $\mathcal{C}_{38}$ are equivalent to Conjecture \ref{conj:GFC} for $g=8$ and $20$ respectively, which are proved in \cite{PavicShenYin-FranchettaK3}.
\end{proof}

\section{Franchetta for $\mathcal{C}_8$ and $\mathcal{F}_{2}$}
\label{sec:d=8}
In this section, we first show Theorem \ref{thm:main-2} for $d=8$, and then deduce from it a new proof of Conjecture \ref{conj:GFC} for $g=2$. The key is the geometric characterization of special cubic fourfolds of discriminant 8: those are exactly the ones containing  a plane \cite[Section 4.1.1]{Hassett-SpeicialCubic}.

Consider the following varieties.
\begin{align*}
B&:=\left\{X\subset \PP^5 ~|~ X \text{ is a cubic fourfold containing a plane}\right\}.\\
P&:=\left\{(R,  X)~|~ X \text{ is a cubic fourfold}, R \text{ is a plane contained in } X \right\}.
\end{align*}
We have natural morphisms $p\colon P\to B$ and $q\colon P\to \Gr(\PP^2, \PP^5)$ sending a couple $(R, X)$ to $X$ and $R$ respectively.  
By construction, $p, q$ are surjective, and the fiber of $q$ over a point $[R]\in \Gr(\PP^2, \PP^5)$ parametrizes all the cubic fourfolds containing the plane $R$, which is a Zariski open subset of 
$\PP H^0(\PP^5, \mathcal{I}_R\otimes \mathcal{O}(3))\simeq \PP^{45}$.

\begin{proof}[Proof of Theorem \ref{thm:main-2} for $d=8$]
	As there is a dominant morphism $B\to \mathcal{C}_8$, it suffices to show the Franchetta property for the universal family of cubic fourfolds $\mathcal{X}\to B$. 
	For any $b\in B$, denote by $X_b$ the corresponding special cubic fourfold of discriminant 8, and let $R$ be any plane contained in $X_b$ (for generic $b$, there is only one plane). It is obvious (or one uses Lemma \ref{lemma:LinkFanoScroll}) that 
	\[\GDCH_B^3(X_b)\subset \GDCH_{B_R}^3(X_b),\]
where $B_R\subset B$ is the subfamily of cubic fourfolds containing the plane $R$. 

By Proposition \ref{prop:ProjectiveBundleArgument}, 
\[\GDCH_{B_R}^3(X_b)=\ima\left(\CH^3(\PP^5)\to \CH^3(X_b)\right)+\ima\left(\CH^1(R)\to \CH^3(X_b)\right)=\QQ h^3+\QQ l,\]
where $h$ is the hyperplane section class and $l$ is the class of a line in $R$. However $l$ and $h^3$ are proportional. Indeed, denoting by $i\colon R\hookrightarrow X$ and $\iota\colon X\hookrightarrow \PP^5$ the natural closed immersions, we have
\begin{equation}
\label{eqn:Relationh3andl}
	h^3=\iota^*(\iota_*(R))=R\cdot c_1(\mathscr{N}_{X/\PP^5})=R\cdot 3h=3i_*i^*(h)=3l.
\end{equation}
Therefore, $\GDCH_B^3(X_b)=\QQ h^3$.
\end{proof}

As an application, we provide a proof of the generalized Franchetta conjecture \ref{conj:GFC} for $g=2$, which is different from the one in \cite{PavicShenYin-FranchettaK3} using Mukai models.

\begin{proof}[Proof of Conjecture \ref{conj:GFC} for $g=2$]
A generic cubic fourfold $X$ in $\mathcal{C}_8$ contains only one plane, denoted by $R$. Projecting from $R$ endows the blow-up $X':=\operatorname{Bl}_{R}X$ with a quadric fibration structure $\pi\colon X'\to \mathbb{P}^2$, where the base $\mathbb{P}^2$ parameterizes all $\mathbb{P}^3$'s containing $R$, and the fibers of $\pi$ are exactly the quadric surfaces that are residual intersections (to $R$) of the corresponding $\mathbb{P}^3$ with $X$. The Stein factorization of the relative Hilbert scheme of lines of $\pi$ is as follows
	\[\operatorname{Hilb}^{\operatorname{line}}(X'/\mathbb{P}^2)\to S \to \mathbb{P}^2,\]
	where the first map is a $\mathbb{P}^1$-fibration and the second map is a double cover. The surface $S$ is the associated (twisted) K3 surface (see \cite[\S1]{VoisinCubicTorelli}, \cite{Kuz10}). We identify $\CH_0(\operatorname{Hilb}^{\operatorname{line}}(X'/\mathbb{P}^2))$ and $\CH_0(S)$. Note that there is a natural map $i\colon \operatorname{Hilb}^{\operatorname{line}}(X'/\mathbb{P}^2) \to F(X)$, providing a uniruled divisor in the Fano variety of lines. By \cite[Example 1.5]{Shen-Yin-cubic}, the following composition is an isomorphism:
	\[\CH_0(S)\simeq \CH_0(\operatorname{Hilb}^{\operatorname{line}}(X'/\mathbb{P}^2))\to \CH_0(F(X))\to \CH_1(X),\]
	where the first map is induced by $i$ and the second map is induced by the incidence variety $\{(l, x)\in F(X)\times X~|~ x\in l\}$.
	
	It is clear from the above construction that the isomorphism between $\CH_0(S)$ and $\CH_1(X)$ can be defined generically over the moduli space $\mathcal{C}_8$, which admits a dominant map to $\mathcal{F}_2$. Therefore the Franchetta property for the universal family of cubic fourfolds over $\mathcal{C}_8$, which is just proved previously, implies the generalized Franchetta conjecture \ref{conj:GFC} for $\mathcal{F}_2$. 
\end{proof}

\begin{remark}[Twisted K3 surfaces]
	Recently, Brakkee \cite{BrakkeeTwistedK3} constructed and studied moduli spaces of twisted polarized K3 surfaces, as well as their relations with special cubic fourfolds.  In particular, the following is shown (\cite[p.~1475]{BrakkeeTwistedK3}): let $g$ be a positive integer such that $d=2g-2$ satisfies $(**)$ and $d\equiv 2 \pmod 6$, then for any $r$ not divisible by 3, there exists a birational isomorphism between the moduli space of special cubic fourfolds $\mathcal{C}_{dr^2}$ and the moduli space $\mathcal{F}_{g}[r]$ of order-$r$ twisted K3 surfaces of genus $g$. Note that forgetting the Brauer class gives rise to a natural surjective map $\mathcal{F}_{g}[r]\to \mathcal{F}_g$. The same argument as in Section \ref{sect:SpecialCubic}, in particular Theorem \ref{thm:GFCK3andCubic}, can be adapted to the twisted case to show that \textit{for $d$ and $g$ as before, the Franchetta property for the universal family over $\mathcal{C}_{dr^2}$ is equivalent to the Franchetta property for the universal family of K3 surfaces over $\mathcal{F}_g[r]$, hence implies the generalized Franchetta conjecture \ref{conj:GFC} for $g$.}
	Our new proof of Conjecture \ref{conj:GFC} for $g=2$ given above is the special case where $g=r=d=2$.
\end{remark}

\section{Franchetta for $\mathcal{C}_{20}$}
\label{sec:d=20}
The $d=20$ case of Theorem \ref{thm:main-2} is already proved in \cite[Lemma 6.3]{FLV-MCK} using the so-called K\"uchle fourfolds of type c7. In this section, we give an alternative proof, which is very similar to the case $d=8$ treated in Section \ref{sec:d=8}. The geometric input is Hassett's result \cite[Section 4.1.4]{Hassett-SpeicialCubic} that special cubic fourfolds of discriminant 20 are characterized generically as the ones containing a Veronese surface, that is, the image of the embedding of  $\PP^2$ into $\PP^5$ via the complete linear system $|\mathcal{O}_{\PP^2}(2)|$. Similarly as in Section \ref{sec:d=8}, set
\begin{align*}
B&:=\left\{X\subset \PP^5 ~|~ X \text{ is a cubic fourfold containing a Veronese surface}\right\},\\
T&:=\left\{R\subset \PP^5 ~|~ R \text{ is a Veronese surface}\right\},\\
P&:=\left\{(R,  X)~|~ X \text{ is a cubic fourfold}, R \text{ is a Veronese surface contained in } X \right\},
\end{align*}
together with natural surjective morphisms $p\colon P\to B$ and $q\colon P\to T$. 

\begin{proof}[Proof of Theorem \ref{thm:main-2} for $d=20$]
Let $\mathcal{X}\to B$ be the universal family. For any $b\in B$, denote the fiber by $X_b$ and take a Veronese surface $R\subset X_b$. Let $B_R\subset B$ be the subvariety parametrizing cubic fourfolds containing $R$, then
	\[\GDCH_B^3(X_b)\subset \GDCH_{B_R}^3(X_b).\]
As $R$ is cut out by quadrics, for any point $x\in \mathbb{P}^5\backslash R$, there exists a cubic fourfold containing $R$ and avoiding $x$. Hence Proposition \ref{prop:ProjectiveBundleArgument} applies and gives that
\[\GDCH_{B_R}^3(X_b)=\ima\left(\CH^3(\PP^5)\to \CH^3(X_b)\right)+\ima\left(\CH^1(R)\to \CH^3(X_b)\right)=\QQ h^3+\QQ l,\]
where $l$ is a line in $R$ (so a conic in $\PP^5$). A similar computation as in \eqref{eqn:Relationh3andl} gives that $3l=2h^3$.
Therefore, $\GDCH_B^3(X_b)=\QQ h^3$.	
\end{proof}

\section{Franchetta for $\mathcal{C}_{26}$ and $\mathcal{F}_{14}$}
\label{sec:g=14}
In this section, we establish Conjecture \ref{conj:GFC} in the case $g=14$. Thanks to Theorem \ref{thm:GFCK3andCubic} (or Theorem \ref{thm:main-3}), it is equivalent to proving the Theorem \ref{thm:main-2} for $d=26$.
%
The key ingredient in our argument is the following geometric characterization of such special cubic fourfolds generically as the ones containing rational normal scrolls of degree 7 with 3 nodes. In the sequel, we often simply call such scrolls \textit{3-nodal and septic}.

\begin{theorem}[Farkas--Verra \cite{FarkasVerra-genus14}]
	\label{thm:FarkasVerraGenus14}
A generic member $X\in \mathcal{C}_{26}$ contains a 2-dimensional family of 3-nodal septic scrolls, parameterized by a non-empty Zariski open subset of the Hodge-theoretically associated degree 14 K3 surface of $X$. Conversely, given a 3-nodal septic scroll $R\subset \PP^5$, a cubic fourfold containing $R$ is special of discriminant 26.
\end{theorem}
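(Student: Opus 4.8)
The plan is to treat the two assertions separately. The converse statement — that any cubic fourfold containing a $3$-nodal septic scroll lies in $\mathcal{C}_{26}$ — is a self-intersection computation in the lattice $\HH^{2,2}(X,\ZZ)$, which I can carry out completely. The direct statement — existence of a $2$-dimensional family of such scrolls on a generic $X\in\mathcal{C}_{26}$, parameterized by the associated K3 — requires a parameter count followed by an identification of the family with the K3 surface, and I expect this identification to be the main obstacle.

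For the converse, let $R\subset X\subset\PP^5$ be a $3$-nodal septic scroll in a smooth cubic fourfold, and let $\nu\colon\wt R\to X$ be the composition of the normalization with the inclusion, so that $\wt R$ is a smooth rational scroll (a Hirzebruch surface) and $\nu$ is an immersion, generically injective, with exactly $\delta=3$ transverse double points. I would determine the Gram matrix of the rank-$2$ lattice $\langle h^2,[R]\rangle$. The off-diagonal entry is $[R]\cdot h^2=\deg R=7$. For the diagonal entry I use the immersion self-intersection formula $[R]^2=\int_{\wt R}c_2(N_\nu)+2\delta$, the correction $2\delta$ recording the two intersection points that a generic deformation produces at each transverse node. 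Since $c_1(T_X)=3h$ and $c_2(T_X)=6h^2$, the normal sequence $0\to T_{\wt R}\to\nu^{*}T_X\to N_\nu\to 0$ gives
\[\int_{\wt R}c_2(N_\nu)=6\deg R-e(\wt R)+3\,K_{\wt R}\cdot\wt H+K_{\wt R}^2,\]
with $\wt H=\nu^{*}h$. Substituting the scroll invariants $\deg R=\wt H^2=7$, $e(\wt R)=4$, $K_{\wt R}^2=8$, and $K_{\wt R}\cdot\wt H=-9$ (adjunction with sectional genus $0$) yields $\int_{\wt R}c_2(N_\nu)=19$, hence $[R]^2=19+2\cdot 3=25$. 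The Gram matrix is therefore $\left(\begin{smallmatrix}3&7\\7&25\end{smallmatrix}\right)$, of determinant $75-49=26$, and since $[R]$ is not proportional to $h^2$ the cubic is special of discriminant $26$. Reassuringly, the same computation for a \emph{smooth} septic scroll ($\delta=0$) gives $[R]^2=19$ and discriminant $8$; thus the three nodes are exactly what is needed to reach $26$.

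For existence and the count, I would realize the scrolls projectively and run an incidence argument. Let $\mathcal{H}$ be the parameter space of $3$-nodal septic scrolls in $\PP^5$, obtained by projecting a smooth degree-$7$ rational normal scroll in $\PP^8$ from a plane chosen so that precisely three chords meet the centre. I expect $\dim\mathcal{H}=44$: smooth septic scrolls in $\PP^5$ move in a $47$-dimensional family, and acquiring each of the three nodes is one independent condition; this can be checked either from the projection description or from $h^0(N_{R/\PP^5})$. A Riemann--Roch computation on $\wt R$, imposing one condition per node, gives $h^0(\mathcal{O}_R(3))=46-3=43$, hence $h^0(\mathcal{I}_R(3))=56-43=13$, so the cubics through a fixed scroll form a $\PP^{12}$. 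The incidence $\tilde I=\{(R,f):R\subset V(f)\}\subset\mathcal{H}\times\PP^{55}$ then has dimension $44+12=56$. By the converse, its image in $\PP^{55}$ lies in the $\operatorname{PGL}_6$-invariant locus of dimension $19+35=54$ lying over $\mathcal{C}_{26}$; combining nonemptiness (one explicit scroll–cubic pair), a transversality check that the incidence dominates this locus, and the bound $56-54=2$, I conclude that the generic fibre — the family of scrolls on a fixed generic $X\in\mathcal{C}_{26}$ — is exactly $2$-dimensional. As a consistency check, the analogous count for smooth septic scrolls produces a $2$-dimensional family on a generic $X\in\mathcal{C}_8$, matching the genus-$2$ picture of Section~\ref{sec:d=8}.

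The remaining, and hardest, point is to identify this $2$-dimensional family with the associated K3 surface $S$. I would do this through the derived-categorical description already used in this paper: by Proposition~\ref{prop:AddingtonThomasInFamily}, for a generic $X\in\mathcal{C}_{26}$ the surface $S$ is the moduli space $M_{\underline{\sigma}}(\v)$ of $\underline{\sigma}$-stable objects in $\Ku(X)$ with the isotropic Mukai vector $\v=\lambda_1+3\lambda_2+\tau$ of Lemma~\ref{lemma:IsotropicMukaiVector}. The plan is to attach to each scroll $R\subset X$ a stable object in $\Ku(X)$ — the projection into $\Ku(X)$ of a natural sheaf supported on $R$ (for instance $\mathcal{O}_R$, or a suitable twist or ideal sheaf thereof) — verify that its Mukai vector equals $\v$, and thereby produce a morphism from the family of scrolls to $S$; one then proves this morphism birational by checking it is dominant (both sides being irreducible and $2$-dimensional) and generically injective (a generic object of class $\v$ arising from a unique scroll). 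The genuine difficulty lies precisely here: the Mukai-vector bookkeeping and the (bi)rationality are far more delicate than the lattice computation, and constitute the essential content of Farkas--Verra's construction \cite{FarkasVerra-genus14}. A more classical alternative — realizing $S$ directly as a parameter space for the scrolls via liaison in $\PP^5$ — avoids the stability input but demands the same careful projective geometry.
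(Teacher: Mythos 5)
The paper itself contains no proof of Theorem \ref{thm:FarkasVerraGenus14}: it is imported wholesale from Farkas--Verra \cite{FarkasVerra-genus14}, so your proposal has to be measured against their argument. Your converse half is correct, complete, and is essentially their proof: the double-point formula with $\int_{\wt R}c_2(N_\nu)=42-4-27+8=19$ and correction $2\delta=6$ gives $[R]^2=25$, hence the Gram matrix $\left(\begin{smallmatrix}3&7\\7&25\end{smallmatrix}\right)$ of determinant $26$ (and since $26$ is squarefree, the sublattice $\langle h^2,[R]\rangle$ is automatically saturated, a one-line point worth making explicit). Your dimension bookkeeping is also right and consistent with the fibration facts recorded in Section \ref{sec:g=14}: $h^0(N_{R/\PP^5})=47$ for smooth septic scrolls, $44$ for the $3$-nodal family, $h^0(\II_R(3))=13$ so cubics through a scroll form a $\PP^{12}$, and the $56$-dimensional incidence sits over the $54$-dimensional divisor of discriminant-$26$ cubics in $\PP^{55}$. (Note only that $h^0(\II_R(3))=13$ requires surjectivity of the restriction map onto the $43$-dimensional space of sections matching at the nodes; this, like the base-locus statement of Lemma \ref{lemma:baselocus}, is verified computationally by Russo--Staglian\`o \cite{RussoStagliano-C42}.)

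The forward half is where your proposal has a genuine gap, at exactly the two points you flag but do not close. First, the dominance of the incidence over the discriminant-$26$ divisor is not a routine transversality remark: your count only shows that the generic fiber over the \emph{image} has dimension at least $2$, and a priori the scroll-containing cubics could fill a proper subvariety of the divisor with larger fibers; one explicit scroll--cubic pair gives nonemptiness of the image, not dominance, and ruling this out is tantamount to the existence statement itself. Second, the identification of the fiber with the associated K3 via moduli of objects in $\Ku(X)$ is a program rather than a proof (the choice of object, its stability, the Mukai-vector computation, and generic injectivity are all left open), and it moreover needs the first point as input to know the scroll family is nonempty and $2$-dimensional. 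Farkas--Verra's actual argument runs in the opposite direction and resolves all of this simultaneously: since $26=2(3^2+3+1)$, Hassett's theorem \cite{Hassett-SpeicialCubic} supplies an isomorphism $\varphi\colon S^{[2]}\xrightarrow{\simeq}F(X)$ for general $X\in\mathcal{C}_{26}$; for $t\in S$ they take the rational curve $T_t=\varphi(\rho^{-1}(t))$, with $\rho\colon S^{[2]}\to S^{(2)}$ the Hilbert--Chow morphism and $t$ viewed on the diagonal, compute that its Pl\"ucker degree is $7$, and show the surface swept out by the corresponding lines is a rational scroll with exactly three nodes. This constructs the scroll \emph{from} the K3 point, yielding in one stroke existence on every general member, the $2$-dimensional family, its parameterization by an open subset of $S$, and the dominance your count is missing -- and it is precisely this description of $T_t$ that the proof of Theorem \ref{thm:main-2} for $d=26$ in Section \ref{sec:g=14} relies on. Your plan inverts this logical order, trying to recover the K3 from the scrolls; that can plausibly be carried out with modern stability machinery, but it is not done here and is not Farkas--Verra's route.
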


Another key ingredient is on the defining equations of these scrolls:

\begin{lemma}[Russo--Staglian\`o \cite{RussoStagliano-C42}]\label{lemma:baselocus} Let $R\subset \PP^5$ be a 3-nodal septic scroll. Then $R$ is cut out by cubic equations.
\end{lemma}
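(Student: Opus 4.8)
The plan is to prove the slightly stronger statement that the ideal sheaf $\mathcal{I}_R$ is $3$-regular in the sense of Castelnuovo--Mumford; this immediately yields that $\mathcal{I}_R(3)$ is globally generated, which is exactly the assertion that $R$ is cut out (scheme-theoretically) by cubics. I would organize the argument around the normalization of $R$. A $3$-nodal septic scroll is the image of a smooth rational normal scroll $\widetilde{R}\cong \mathbb{F}_e$ of degree $7$ spanning $\PP^8$, under a linear projection from a plane $\Pi\cong \PP^2$ that identifies exactly three pairs of points; write $\nu\colon \widetilde{R}\to R$ for the resulting normalization, whose conductor sequence reads
\[ 0\to \mathcal{O}_R\to \nu_\ast\mathcal{O}_{\widetilde{R}}\to \mathcal{Q}\to 0, \]
with $\mathcal{Q}$ a length-$3$ skyscraper supported at the three nodes. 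Since $\widetilde{R}$ is a rational normal scroll it is arithmetically Cohen--Macaulay and $2$-regular in $\PP^8$, so the cohomology of all its twists is completely under control; through the finite map $\nu$ and the conductor sequence this control transfers to $R$. A short diagram chase with the sequence gives in particular $\HH^2(R,\mathcal{O}_R)=\HH^2(\widetilde{R},\mathcal{O}_{\widetilde{R}})=0$ and $\HH^1(R,\mathcal{O}_R)=\C^3$.

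Next I would unwind the defining vanishings of $3$-regularity, $\HH^i(\PP^5,\mathcal{I}_R(3-i))=0$ for $i\geq 1$, by means of the ideal sequence $0\to\mathcal{I}_R\to\mathcal{O}_{\PP^5}\to\mathcal{O}_R\to 0$ together with the vanishing of intermediate cohomology of line bundles on $\PP^5$. For $i\geq 3$ the relevant groups reduce to $\HH^j(R,\mathcal{O}_R(\ast))$ with $j\geq 2$, all of which vanish because $R$ is a surface with $\HH^2(R,\mathcal{O}_R)=0$. There remain exactly the cases $i=2$ and $i=1$, which become
\[ \text{(A)}\quad \HH^1(R,\mathcal{O}_R(1))=0, \qquad \text{(B)}\quad \HH^1(\PP^5,\mathcal{I}_R(2))=0. \]
For (A) I would twist the conductor sequence by $\mathcal{O}_R(1)$: since $\HH^1(\widetilde{R},\mathcal{O}_{\widetilde{R}}(H))=0$ by arithmetic Cohen--Macaulayness, $\HH^1(R,\mathcal{O}_R(1))$ is the cokernel of the difference-of-values map $\HH^0(\widetilde{R},\mathcal{O}_{\widetilde{R}}(H))\to \C^3$, which is surjective as soon as the six preimages of the three nodes impose independent conditions on the hyperplane system of the scroll in $\PP^8$ --- a genericity statement guaranteed by the Farkas--Verra construction.

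Condition (B) is where I expect the real difficulty to lie. A Riemann--Roch (hyperplane-section) computation on $\widetilde{R}$, using that a general hyperplane section is a rational normal curve of degree $7$, gives $h^0(\mathcal{O}_{\widetilde{R}}(2H))=24$, whence (again by the independence of the node preimages) $h^0(\mathcal{O}_R(2))=24-3=21=h^0(\mathcal{O}_{\PP^5}(2))$. Thus the restriction $\HH^0(\mathcal{O}_{\PP^5}(2))\to \HH^0(\mathcal{O}_R(2))$ is a map between spaces of equal dimension, and (B) holds if and only if it is injective, i.e.\ if and only if no quadric contains $R$. To control the latter I would observe that quadrics in $\PP^5$ pull back to quadrics in the six projected coordinates on $\PP^8$, so a quadric through $R$ would be a nonzero element of the intersection, inside the $45$-dimensional space of quadrics on $\PP^8$, of the $21$-dimensional space of quadrics in those six coordinates with the $21$-dimensional space of quadrics through $\widetilde{R}$ --- an intersection of expected dimension $21+21-45<0$.

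The main obstacle is therefore to upgrade this dimension count from \emph{expected} to \emph{actual} for the special projection centers $\Pi$ that produce three nodes: one must verify that the quadratic normality of $R$ survives the non-generic position of $\Pi$. I would settle this either by checking that the Farkas--Verra scrolls remain generic for this single codimension condition, or --- as in the cited work of Russo--Staglian\`o --- by an explicit parametrization of $R$ and a direct computation of the minimal free resolution of its homogeneous ideal, reading off that all minimal generators sit in degree $3$. Once (B) is established, $3$-regularity follows from (A), (B) and the automatic higher vanishings, and the lemma is proved.
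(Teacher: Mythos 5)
Your Castelnuovo--Mumford skeleton is sound as far as it goes: the conductor sequence for the normalization $\nu\colon\widetilde{R}\to R$, the automatic vanishings for $i\ge 3$, the reduction of $3$-regularity to the two statements (A) $\HH^1(R,\mathcal{O}_R(1))=0$ and (B) $\HH^1(\PP^5,\mathcal{I}_R(2))=0$, and your dimension counts all check out ($h^0(\mathcal{O}_{\widetilde{R}}(2H))=24$, hence $45-24=21$ quadrics through $\widetilde{R}\subset\PP^8$ and, granting the node conditions, $h^0(\mathcal{O}_R(2))=21$; pushing one degree further gives $56-43=13$ cubics through $R$, consistent with the $\PP^{12}$ used later in the paper). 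But there is a genuine gap, and it sits exactly where you flag it: neither (A) nor (B) is actually proved. (A) is waved through as ``a genericity statement guaranteed by the Farkas--Verra construction'', and (B) is conceded as ``the main obstacle'', supported only by an expected-dimension count that, as you yourself note, has no force for the actual projection centers. The difficulty is real: the three-nodal condition already places the center $\Pi$ in special position --- a general plane in $\PP^8$ misses the $5$-dimensional secant variety of $\widetilde{R}$ entirely, and meeting three secant lines is a codimension-$3$ condition on $\Pi\in\Gr(\PP^2,\PP^8)$ --- so ``generic for this single codimension condition'' is precisely what must be argued, not assumed. Concretely, one would have to show that no $3$-secant-admissible $\Pi$ lies in the vertex of a quadric cone through the scroll $S(3,4)$ (the exact failure mode for (B)), and that the six node preimages remain linearly independent (for (A)). Note also that the lemma quantifies over \emph{every} $3$-nodal septic scroll and is applied that way (Proposition \ref{prop:ReduceToRuling} invokes it for each $R_t$, $t\in q(p^{-1}(b))$), so even a correct genericity argument would prove less than the statement.

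Your proposed fallback --- ``an explicit parametrization of $R$ and a direct computation of the minimal free resolution'' --- is not a second option inside your proof; it \emph{is} the paper's proof. The paper establishes the lemma purely by citation to Russo--Staglian\`o \cite{RussoStagliano-C42}, Section 7, item $(\rom2)$ of Table 1 of loc.\ cit., where exactly this computation is carried out. So as written, your argument either terminates in the very computation being cited (in which case it adds a regularity wrapper to the paper's proof --- and in fact aims at more than is needed, since Proposition \ref{prop:ProjectiveBundleArgument} only requires the cubics through $R$ to have base locus $R$, which is weaker than $3$-regularity), or it is incomplete at (A) and (B). To turn it into a genuinely independent proof, the missing mathematical content is a geometric argument excluding vertex planes of quadrics through the degree-$7$ scroll from the locus of $3$-secant-admissible projection centers.
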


\begin{proof} This has been checked in \cite[Section 7]{RussoStagliano-C42}, cf. item $(\rom2)$ in Table 1 of loc. cit.
\end{proof}

Let us now consider the following parameter spaces:
\begin{align*}
		T&:=\left\{R\subset \PP^5~|~ R \text{ is a 3-nodal septic scroll} \right\}.\\
		B&:=\left\{X\subset \PP^5 ~|~ X \text{ is a cubic fourfold containing a 3-nodal septic scroll}\right\}.\\
		P&:=\left\{R\subset X\subset \PP^5~|~ X \text{ is a cubic fourfold}, R\in T \right\}.
\end{align*}
We emphasize that in the above definitions, we do not quotient out by automorphisms, hence the spaces are some open subsets of certain Hilbert schemes in $\PP^5$.


Then we have natural morphisms in the following diagram. 
\begin{equation}
\label{diag:PBT}
	\xymatrix{
	&P \ar[dr]^{q} \ar[dl]_{p}&\\
B&&T.}
\end{equation} 
By Theorem \ref{thm:FarkasVerraGenus14} (combined with \cite[Proposition 3.4]{FarkasVerra-genus14}), 
we have the following.
\begin{lemma}
	In the above diagram.
	\begin{enumerate}[(i)]
		\item The natural map $B\to \mathcal{C}_{26}$ is dominant.
		\item 	The morphism $p$ is surjective. Its general fibers are Zariski open subsets of K3 surfaces.
		\item  The morphism $q$ is surjective. Its general fibers are Zariski open subsets of  $\PP^{12}$.
	\end{enumerate}
\end{lemma}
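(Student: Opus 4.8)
The plan is to read (i) and (ii) directly off the two halves of Theorem~\ref{thm:FarkasVerraGenus14}, and to establish (iii) by a Bertini-type argument for surjectivity together with a cohomological computation for the fiber dimension, the geometric inputs being Lemma~\ref{lemma:baselocus} and \cite[Proposition 3.4]{FarkasVerra-genus14}.

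For (i) and (ii): the map $B\to \mathcal{C}_{26}$ sends a cubic fourfold to its moduli point, and by the converse half of Theorem~\ref{thm:FarkasVerraGenus14} every $X\in B$ is indeed special of discriminant $26$, so the map lands in $\mathcal{C}_{26}$; the forward half asserts that a generic $X\in \mathcal{C}_{26}$ contains a $3$-nodal septic scroll, so $X$ lies in the image and the map is dominant. Surjectivity of $p$ is immediate from the definition of $B$, since every $X\in B$ contains some $R\in T$, yielding a point $(R\subset X)\in P$ over $X$. The fiber $p^{-1}(X)$ is precisely the set of $3$-nodal septic scrolls contained in $X$, which the forward half of Theorem~\ref{thm:FarkasVerraGenus14} identifies, for generic $X$, with a non-empty Zariski open subset of the associated degree $14$ K3 surface; hence the general fiber of $p$ is Zariski open in a K3 surface.

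For (iii): surjectivity of $q$ amounts to showing that each $R\in T$ is contained in some \emph{smooth} cubic fourfold. By Lemma~\ref{lemma:baselocus} the ideal $\mathcal{I}_R$ is generated by cubics, so the linear system $|\mathcal{I}_R(3)|$ is base-point-free away from $R$, and Bertini produces a member smooth on $\PP^5\setminus R$. It remains to check smoothness along $R$: away from the three nodes this follows from a local computation (the cubics through $R$ generate $\mathcal{I}_R$, so some member has nonzero differential at a prescribed smooth point of $R$), while at the nodes I would invoke \cite[Proposition 3.4]{FarkasVerra-genus14}, or the converse half of Theorem~\ref{thm:FarkasVerraGenus14}, whose formulation presupposes the existence of smooth cubic fourfolds through $R$. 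The fiber $q^{-1}(R)$ is then the open locus of smooth cubics inside $\PP H^0(\PP^5, \mathcal{I}_R\otimes \mathcal{O}(3))$; to identify it with an open subset of $\PP^{12}$ I would verify $h^0(\PP^5, \mathcal{I}_R(3))=13$, i.e. that $R$ imposes exactly $43$ conditions on cubics, the cohomological input supplied by \cite[Proposition 3.4]{FarkasVerra-genus14}. As a consistency check on fiber dimensions, $\dim B=54$ (the $35$-dimensional $\operatorname{PGL}_6$-orbits over the $19$-dimensional $\mathcal{C}_{26}$), so $\dim P=\dim B+2=56=\dim T+12$ and $\dim T=44$.

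The hard part is the smoothness statement in (iii): a general cubic through $R$ is automatically smooth off $R$, but one must exclude forced singularities of $X$ at the three nodes of the scroll, where the base-point-free Bertini argument does not directly apply. I expect to settle this either by an explicit local analysis of the cubics through $R$ near its nodes, or by reading it off from \cite[Proposition 3.4]{FarkasVerra-genus14}; the same reference supplies the other delicate input, the equality $h^0(\PP^5, \mathcal{I}_R(3))=13$ that pins down the fiber of $q$ as $\PP^{12}$.
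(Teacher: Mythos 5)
Your proposal is correct and follows essentially the same route as the paper, which derives all three parts directly from Theorem~\ref{thm:FarkasVerraGenus14} combined with \cite[Proposition 3.4]{FarkasVerra-genus14} without further argument. You merely make explicit the inputs the paper delegates to that citation — the Bertini/base-locus step off $R$ via Lemma~\ref{lemma:baselocus}, the smoothness of a general cubic through $R$ along the scroll (including at its nodes), and the computation $h^0(\PP^5,\mathcal{I}_R(3))=13$ identifying the fibers of $q$ with open subsets of $\PP^{12}$ — all of which are indeed what \cite[Proposition 3.4]{FarkasVerra-genus14} supplies.
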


Let $\pi\colon \mathcal{X}\to B$ be the  universal family of cubic fourfolds  over $B$. 
\begin{proposition}
	\label{prop:ReduceToRuling}
	For any $b\in B$, let $X_b$ be the fiber of $\pi$ over $b$. Then
	\[\GDCH^3_B(X_b)\subset \bigcap_{t\in q(p^{-1}(b))} \left(\QQ h^3+ \QQ\ell_t\right),\]
	where $\ell_t$ is the class in $\CH^3(X_b)$ of  the ruling of $R_t$, and $R_t$ is the scroll parameterized by $t\in T$.
\end{proposition}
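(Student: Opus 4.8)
The plan is to combine Lemma \ref{lemma:LinkFanoScroll}, the projective bundle argument with base locus (Proposition \ref{prop:ProjectiveBundleArgument}), and the defining-equations input (Lemma \ref{lemma:baselocus}), exactly in the spirit of the $d=8$ and $d=20$ cases but now with a singular, non-linear base locus. First I would fix $b\in B$ and apply Lemma \ref{lemma:LinkFanoScroll} to the diagram \eqref{diag:PBT} to obtain
\[
\GDCH^3_B(X_b)\ \subset\ \bigcap_{t\in q(p^{-1}(b))}\GDCH^3_{q^{-1}(t)}(X_b),
\]
where $q^{-1}(t)$ is (an open subset of) the space of cubic fourfolds containing the fixed scroll $R:=R_t$. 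So it suffices to identify each $\GDCH^3_{q^{-1}(t)}(X_b)$ with $\QQ h^3+\QQ\ell_t$.

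For a single fixed scroll $R\subset\PP^5$, I would run Proposition \ref{prop:ProjectiveBundleArgument} with $P:=\PP^5$, the vector bundle $E:=\mathcal{O}_{\PP^5}(3)$, and base locus $Q:=R$, so that $\bar B=\PP H^0(\PP^5,\mathcal{I}_R\otimes\mathcal{O}(3))$ parameterizes cubics through $R$. The crucial hypothesis to verify is that the sections of $\mathcal{I}_R\otimes\mathcal{O}(3)$ globally generate $\mathcal{O}(3)$ away from $R$: this is precisely where Lemma \ref{lemma:baselocus} enters, since $R$ being cut out by cubics means that for every point $x\in\PP^5\setminus R$ there is a cubic through $R$ missing $x$. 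Granting this, Proposition \ref{prop:ProjectiveBundleArgument} yields
\[
\GDCH^3_{q^{-1}(t)}(X_b)=\ima\!\left(\CH^3(\PP^5)\to\CH^3(X_b)\right)+\ima\!\left(\CH^1(R)\to\CH^3(X_b)\right).
\]
The first summand is $\QQ h^3$. For the second, I must show that the image of $\CH^1(R)$ inside $\CH^3(X_b)$ is spanned, modulo $\QQ h^3$, by the single class $\ell_t$ of the ruling. Since $R$ is a rational scroll (a birational modification of a $\PP^1$-bundle over $\PP^1$), its Picard group is generated by the ruling class $\ell$ and a section class, together with the exceptional data coming from the three nodes; pushing forward to $X_b$ and reducing via the relation $i_*i^*h=R\cdot h$ (as in \eqref{eqn:Relationh3andl}) collapses the section class and the hyperplane pullbacks into multiples of $h^3$, leaving $\ell_t$ as the only new generator.

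The step I expect to be the main obstacle is the second summand: unlike the smooth plane or Veronese cases, $R$ is singular (3 nodes) and $\CH^1(R)$ must be understood on a possibly non-normal surface, so I would either work on a resolution $\widetilde R\to R$ and track how the exceptional curves over the nodes push forward, or argue directly that every divisor class on $R$ maps into $\QQ h^3+\QQ\ell_t$ in $\CH^3(X_b)$. Concretely I would use the scroll structure to write $\Pic(\widetilde R)$ explicitly, check that the node-resolving $(-2)$-curves and the section class all become proportional to $h^3$ after pushing to $X_b$ (intersecting with $\mathcal{O}_X(1)$ and using adjunction/excess-intersection), and conclude that only the ruling contributes independently. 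Finally, intersecting over all $t\in q(p^{-1}(b))$ gives the stated inclusion $\GDCH^3_B(X_b)\subset\bigcap_t(\QQ h^3+\QQ\ell_t)$. The passage from the single-scroll computation back to the global family is handled cleanly by Lemma \ref{lemma:LinkFanoScroll}, so no further geometric input beyond Lemmas \ref{lemma:LinkFanoScroll} and \ref{lemma:baselocus} and Proposition \ref{prop:ProjectiveBundleArgument} should be needed.
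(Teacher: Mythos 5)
Your proposal follows the paper's proof essentially step for step: Lemma \ref{lemma:LinkFanoScroll} applied to the diagram \eqref{diag:PBT}, then Proposition \ref{prop:ProjectiveBundleArgument} with $P=\PP^5$, $E=\mathcal{O}(3)$ and base locus $Q=R_t$ (the global generation hypothesis being exactly Lemma \ref{lemma:baselocus}), and finally the reduction of $\ima\left(\CH^1(R_t)\to\CH^3(X_b)\right)$ to $\QQ h^3+\QQ\ell_t$. The ``main obstacle'' you anticipate is in fact a non-issue, and is how the paper disposes of this last step: a 3-nodal septic scroll is the image of the Hirzebruch surface $\FFF_1$ under a \emph{finite} birational (normalization) map --- the nodes are non-normal double points, not singularities resolved by exceptional $(-2)$-curves --- so $\CH^1(R_t)_\QQ$ is at most 2-dimensional, spanned by $h\vert_{R_t}$ and $\ell_t$, and the former pushes forward to $h\cdot R_t=\tfrac{7}{3}h^3$ via $3\,h\cdot R_t=\iota^*\iota_*(R_t)=7h^3$.
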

\begin{proof}
	Applying Lemma \ref{lemma:LinkFanoScroll} to the diagram \eqref{diag:PBT}, we have that for any $b\in B$, 
	\[\GDCH^3_B(X_b)\subset \bigcap_{t\in q(p^{-1}(b))} \GDCH^3_{q^{-1}(t)}(X_b).\]
	However, $q^{-1}(t)$ parameterizes all cubic fourfolds containing $R_t$, which is an open subset of $\PP\HH^0(\PP^5, \mathcal{O}(3)\otimes \mathcal{I}_{R_t})\simeq \PP^{12}$.
	Lemma \ref{lemma:baselocus} guarantees that for any point outside of $R_t$, there is a cubic fourfold containing $R_t$ but not this point. Therefore, Proposition \ref{prop:ProjectiveBundleArgument} implies that 
	\begin{equation}
	\label{eqn:GDCHsubfamily}
			\GDCH^3_{q^{-1}(t)}(X_b)= \ima\left(\CH^3(\PP^5)\to \CH^3(X_b)\right)  + \ima\left( \CH^1(R_t)\to \CH^3(X_b)\right).
	\end{equation}
On the right-hand side of \eqref{eqn:GDCHsubfamily}, the first term is obviously $\QQ h^3$. As for the second term, since there is a surjection $\FFF_1\to R_{t}$ (see \cite[Section 3]{FarkasVerra-genus14}), where $\FFF_1=\operatorname{Bl}_{o}\PP^2$ is the first Hirzebruch surface, the group $\CH^1(R_{t})$ is at most 2-dimensional, generated by the restriction
$h\vert_{R_{t}}$ and the class $\ell_t$ of the ruling of the scroll. The class $h|_{R_{t}}$, when pushed-forward to $X_b$, is $h\cdot R_{t}$. 
To conclude, it suffices to show that $h\cdot R_t\in \CH^3(X_b)$ is proportional to $h^3$. To this end,  let $\iota\colon X_b\to\PP^5$ be the natural inclusion. Then we have
\[3h\cdot R_{t}=\iota^\ast \iota_\ast (R_{t}).\]
Since $\iota_*(R_t)=7H^3\in \CH^3(\PP^5)$, where $H$ is the hyperplane class of $\PP^5$, we obtain that $3h\cdot R_t=7h^3$. The proof is complete. 
\end{proof}

Now we are ready to prove the main results, Theorem \ref{thm:main-1}, or equivalently, Theorem \ref{thm:main-2} for $d=26$.
\begin{proof}[Proof of Theorem \ref{thm:main-2} for $d=26$]
	Since there is a dominant morphism $B\to \mathcal{C}_{26}$, it is enough to show the Franchetta property for codimension-3 cycles  for the universal family of special cubic fourfolds $\pi\colon \mathcal{X}\to B$. Thanks to Proposition \ref{prop:ReduceToRuling}, it suffices to show that for a general cubic fourfold $X$ of discriminant $26$, there exists a 3-nodal septic scroll $R\subset X$, such that the class of the ruling $\ell$ of $R$, viewed as an element in $\CH^3(X)$, is proportional to $h^3$.
	
	Let $S$ be the  K3 surface that is Hodge-theoretically associated to $X$. By \cite{FarkasVerra-genus14} (see Theorem \ref{thm:FarkasVerraGenus14}), there is a dense open subset  $S_0\subset S$ parameterizing the 3-nodal septic scrolls contained in $X$. Choose a constant cycle curve $C$ intersecting $S_0$, which is possible because constant cycle curves are Zariski dense in $S$ (see for example \cite[Lemma 2.3]{VoisinK3}).
	For any $t\in C\cap S_0$, let $R_t$ be the corresponding scroll in $X$. Since all rulings of $R_t$ are parameterized by a rational curve $T_t$, we can view $T_t$ as a rational curve in  the Fano variety of lines $F(X)$. Therefore, we have well-defined (i.e. independent of $t\in C$) cycle classes $L :=L_t\in \CH_0(F(X))$ and $\ell:=P_*(L)\in \CH^3(X)$, where  $P\subset F(X)\times X$ is the incidence subvariety (i.e. the universal projective line).
	
	We claim that \textit{the class $L\in \CH_0(F(X))$ has a 2-dimensional rational orbit}.  Indeed, 
	by \cite{Hassett-SpeicialCubic}, for $d=26$, there is an isomorphism $$\varphi\colon S^{[2]} \xrightarrow{\simeq} F(X)$$ between the Hilbert square of $S$ and $F(X)$.
Since $C$ is a constant cycle curve in $S$, we have the following constant cycle surface  in $S^{[2]}$:
\[W:=\{z\in S^{[2]}~|~ \operatorname{supp}(z)=\{t\}, t\in C\},\]
whose image under $\varphi$ gives rise to a constant cycle surface in $F(X)$. To prove the claim, we only need to see that the points of this constant cycle surface $\varphi(W)$ represent the class $L\in \CH_0(F(X))$. To this end, let $\rho\colon S^{[2]}\to S^{(2)}$ be the Hilbert--Chow morphism, then by the construction of \cite{FarkasVerra-genus14}, for any $t\in S$, the septic rational curve $T_t\subset F(X)$ parameterizing the rulings of $R_t$ is exactly $\varphi(\rho^{-1}(t))$, where $t$ is viewed as a point of the diagonal $\Delta_S\subset S^{(2)}$. Hence the class of points on $\varphi(W)$ is $L$.
The claim is proved. In other words, $L\in \mathsf{S}_2\CH_0(F(X))$, where $\mathsf{S}_\bullet$ refers to Voisin's orbit filtration on 0-cycles \cite{Voisin-Coisotropic}. 

However, thanks to Voisin's result  \cite[Proposition 4.5]{Voisin-Coisotropic} (or \cite[Theorem 2.5]{Voisin-Coisotropic}), we know that $\mathsf{S}_2\CH_0(F(X))$ is one-dimensional, generated by $g^4$, where $g$ is the Pl\"ucker polarization class of $F(X)$. Hence $L\in \QQ g^4$ in $\CH_0(F(X))$.
	
	Since the incidence subvariety $P$ in $F(X)\times X$ induces a morphism  
	\[ P_*\colon \CH_0(F(X))\ \to\CH^3(X), \]  
	which sends $g^4$ to $36 h^3$ (see for example \cite[Lemma A.4]{SV}) and $P_*(L)=\ell$ by construction, one can conclude that $\ell\in \QQ h^3$. In other words, the ruling class $\ell_t$ is a proportional $h^3$ for any $t\in C\cap S_0$. The proof is complete.
\end{proof}

\section{Franchetta for $\mathcal{C}_{14}$ and $\mathcal{F}_{8}$}
\label{sect:d=14}

The argument in Section \ref{sec:g=14} can also be applied to give a new proof of Conjecture \ref{conj:GFC} for $g=8$, or equivalently (by Theorem \ref{thm:GFCK3andCubic}), the Franchetta property for the universal family of special cubic fourfolds over $\mathcal{C}_{14}$. 

Recall that a generic cubic fourfold in $\mathcal{C}_{14}$ is a \textit{Pfaffian} cubic (see Beauville--Donagi \cite{BeauvilleDonagi}), namely, a 4-dimensional smooth linear section of the Pfaffian cubic hypersurface 
$$\operatorname{Pf}:=\left\{\phi\in \mathbb{P}(\bigwedge^2 V)~|~ \phi\wedge\phi\wedge \phi =0\right\},$$
where $V$ is a 6-dimensional vector space. The associated K3 surface $S$ is the dual 2-dimensional linear section of $\Gr(2, V^\vee)\subset \mathbb{P}(\bigwedge^2 V^\vee)$. The key ingredient is the following characterization of cubic fourfolds in $\mathcal{C}_{14}$ by smooth rational normal quartic scrolls, simply called \textit{quartic scrolls} in the sequel, in analogy with Theorem \ref{thm:FarkasVerraGenus14}. 

\begin{theorem}[Hassett {\cite[4.1.3]{Hassett-SpeicialCubic}}, {Beauville--Donagi \cite[Section 2]{BeauvilleDonagi}}, Tregub \cite{Tregub}]
	\label{thm:PfaffCubic}
	Let $\mathcal{C}_{14}$ be the moduli space of special cubic fourfolds with discriminant 14.
	\begin{enumerate}[(i)]
		\item 	A generic member $X$ in $\mathcal{C}_{14}$ is Pfaffian and contains a quartic scroll and conversely, a cubic fourfold containing a quartic scroll is in $\mathcal{C}_{14}$.
		\item A Pfaffian cubic fourfold $X$ contains a two-dimensional family of quartic scrolls parameterized by the associated K3 surface $S$. Moreover, there is a natural isomorphism $S^{[2]}\simeq F(X)$.
	\end{enumerate}
\end{theorem}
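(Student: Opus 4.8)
The plan is to assemble the statement from three classical ingredients: a self-intersection computation on the cubic side, a parameter count identifying the Pfaffian locus with $\mathcal{C}_{14}$, and the Beauville--Donagi description of $F(X)$ together with an explicit construction of the scrolls. I treat the converse of (i) first, since it feeds into the parameter count.

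\emph{Converse of (i): a cubic fourfold $X$ containing a smooth quartic scroll $R$ lies in $\mathcal{C}_{14}$.} Here I would compute the position of $[R]\in \HH^4(X,\ZZ)$ relative to $h^2$. Since $R$ is algebraic we have $h^2\cdot h^2=3$ and $h^2\cdot [R]=\deg R=4$, so everything reduces to the self-intersection $[R]^2=\int_R c_2(N_{R/X})$. Combining the two normal-bundle sequences $0\to T_R\to T_X|_R\to N_{R/X}\to 0$ and $0\to N_{R/X}\to N_{R/\PP^5}\to \OO_R(3)\to 0$ and using $c(T_{\PP^5})|_R=(1+h)^6$ gives the compact formula
\[ c_2(N_{R/X})=6\,h^2+3\,hK_R+K_R^2-\chi_{\mathrm{top}}(R). \]
For a rational normal quartic scroll one has $\deg R=4$, sectional genus $0$ (its hyperplane sections are rational normal curves), whence by adjunction $K_R\cdot h=-6$, together with $K_R^2=8$ and $\chi_{\mathrm{top}}(R)=4$. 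Integrating yields $[R]^2=6\cdot 4+3\cdot(-6)+8-4=10$. Thus $\langle h^2,[R]\rangle$ has Gram determinant $\det\!\begin{pmatrix}3&4\\4&10\end{pmatrix}=14$; the two classes are independent because $h^2\cdot[R]=4$ is not divisible by $3$, and since $14$ is squarefree the sublattice is already saturated of discriminant $14$. By Hassett's theory $X\in\mathcal{C}_{14}$.

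\emph{Forward direction of (i): the generic $X\in\mathcal{C}_{14}$ is Pfaffian.} I would argue by dimensions. Choosing a $\PP^5=\PP(A)$ inside $\PP(\bigwedge^2 V)$ amounts to a point of $\Gr(6,\bigwedge^2 V)$, of dimension $6\cdot 9=54$; modding out by the effective $\operatorname{PGL}_6=\operatorname{PGL}(V)$-action (dimension $35$) shows that Pfaffian cubics form an irreducible, unirational family of dimension $19=\dim\mathcal{C}-1$, i.e. a divisor. By the scroll construction below, every Pfaffian cubic contains a quartic scroll, so by the converse it lies in $\mathcal{C}_{14}$. Since $\mathcal{C}_{14}$ is itself an irreducible divisor in $\mathcal{C}$, an inclusion of irreducible divisors of the same dimension forces equality, and the generic member of $\mathcal{C}_{14}$ is Pfaffian.

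\emph{Part (ii) and the scrolls.} The heart is Beauville--Donagi's isomorphism $\varphi\colon S^{[2]}\xrightarrow{\sim}F(X)$, where $S=\Gr(2,V)\cap\PP(A^{\perp})$ is the associated degree-$14$ K3 surface; it sends a length-two subscheme of $S$ to the line of $X$ cut out by the associated pencil of rank-$4$ Pfaffian forms. Granting this, I construct the scrolls from the Hilbert--Chow morphism $\rho\colon S^{[2]}\to S^{(2)}$: for each $s\in S$ the fibre $\rho^{-1}(2s)\cong\PP(T_sS)\cong\PP^1$ maps under $\varphi$ to a rational curve $C_s\subset F(X)$, and the union $R_s$ of the lines of $X$ parametrised by $C_s$ is a surface swept out by a $\PP^1$-family of lines, hence a scroll. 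An incidence computation exactly as in Section~\ref{sec:g=14} — pushing the class of $C_s$ forward along the universal line $P\subset F(X)\times X$ — would give $\deg R_s=4$ and smoothness and rationality of $R_s$, identifying it as a quartic scroll; letting $s$ vary over $S$ produces the advertised two-dimensional family parametrised by $S$, and the isomorphism $S^{[2]}\simeq F(X)$ is the one recorded in the statement. The main obstacle is precisely the Beauville--Donagi isomorphism itself, i.e. constructing $\varphi$ and proving bijectivity (equivalently, that a generic line on a Pfaffian $X$ corresponds to a unique length-two subscheme of $S$), together with verifying that the surfaces $R_s$ swept by $\varphi(\rho^{-1}(2s))$ are genuinely smooth quartic scrolls rather than degenerate or higher-degree ruled surfaces; the self-intersection and parameter counts above are then routine.
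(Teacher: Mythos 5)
The paper does not prove this statement at all: it is imported as a known result, with citations to Hassett [4.1.3], Beauville--Donagi [Section 2] and Tregub, so there is no internal proof to compare against, and your sketch in fact retraces the route of those references. Within it, your converse half of (i) is correct and essentially complete: the formula $c_2(N_{R/X})=6h^2+3hK_R+K_R^2-\chi_{\mathrm{top}}(R)$ does follow from the two normal-bundle sequences as you say, the numerics of a smooth quartic scroll ($h\cdot K_R=-6$ by adjunction from sectional genus $0$, $K_R^2=8$, $\chi_{\mathrm{top}}(R)=4$) give $[R]^2=10$, the Gram determinant is $\det\begin{pmatrix}3&4\\4&10\end{pmatrix}=14$, and squarefreeness of $14$ does force the sublattice $\langle h^2,[R]\rangle$ to be saturated, so $X\in\mathcal{C}_{14}$ by Hassett's theory.

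The rest has genuine gaps. First, the count $54-35=19$ only bounds the Pfaffian locus from \emph{above}: the fibre of the classifying map over the isomorphism class of a cubic is the union of the $\operatorname{PGL}(V)$-orbits of \emph{all} of its Pfaffian representations, so unless you show that a generic Pfaffian cubic admits only finitely many such representations modulo $\operatorname{PGL}(V)$, you get $\dim\le 19$ and cannot exclude that the Pfaffian locus is a proper subvariety of the irreducible divisor $\mathcal{C}_{14}$; your ``inclusion of irreducible divisors of the same dimension'' presupposes exactly the missing equality. (Hassett's own argument avoids this by an incidence count: cubics through a fixed quartic scroll form a $\mathbb{P}^{27}$ since $h^0(\mathcal{I}_R(3))=56-28=28$, the scrolls form a $29$-dimensional family, and the generic fibre of the $56$-dimensional incidence variety over its image in $\mathbb{P}^{55}$ is the two-dimensional family of scrolls --- which is, however, precisely part (ii).) Second, and decisively, part (ii) is not proved: you yourself name the construction of $\varphi\colon S^{[2]}\xrightarrow{\sim}F(X)$ and the verification that the surfaces swept out by the lines in $\varphi(\rho^{-1}(2s))$ are genuinely smooth quartic scrolls (rather than degenerate or multiply covered ruled surfaces) as ``the main obstacle,'' while the forward half of (i) invokes ``the scroll construction below''; so as written your (i) is circular without (ii). The proposal is thus an accurate roadmap through the cited literature --- and its identification of $\varphi(\rho^{-1}(2s))$ with the curve of rulings is consistent with how the present paper later uses [BD, Proposition 5] in Section 7 --- but it is not a self-contained proof: completing it requires the actual content of Beauville--Donagi's Section 2 (pencils of skew forms of generic rank four, the line they cut on $X$, bijectivity of $\varphi$, and the degree and smoothness check for $R_s$).
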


Another geometric fact we need is the following, see for example \cite[\S 1.4]{Hassett-CIMElecture}.
\begin{lemma}\label{lemma:QuarticScrollEquation}
	A  quartic scroll in $\mathbb{P}^5$ is cut out by quadric equations.
\end{lemma}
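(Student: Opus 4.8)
The plan is to prove Lemma~\ref{lemma:QuarticScrollEquation} by making the statement entirely explicit via the classical description of the rational normal quartic scroll in $\PP^5$. A smooth quartic scroll is the image of $\FFF_n=\PP(\OO\oplus\OO(-n))$ for $n\in\{0,2\}$ embedded by a complete linear system of degree $4$; concretely it is the join-type surface swept out by lines joining corresponding points of two rational normal curves (a line and a twisted cubic, or two conics) spanning complementary linear subspaces of $\PP^5$. The cleanest route is to realize the scroll as a determinantal variety: it is the locus where a $2\times 3$ matrix of linear forms drops rank, and then to identify its homogeneous ideal with the ideal of $2\times 2$ minors of that matrix.

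\textbf{Key steps.} First I would fix coordinates $[x_0:\dots:x_5]$ on $\PP^5$ and exhibit the scroll as the degeneracy locus of a generic $2\times 3$ matrix
\begin{equation*}
M=\begin{pmatrix} x_0 & x_1 & x_2\\ x_3 & x_4 & x_5\end{pmatrix}
\end{equation*}
(after a linear change of coordinates adapted to the two directrix curves), so that the scroll $R$ is cut out set-theoretically by the vanishing of the three $2\times 2$ minors of $M$. Second, I would invoke the standard fact that the ideal of maximal minors of a generic matrix of linear forms is prime and equals the full homogeneous ideal of the determinantal variety it defines (this is the classical content that the minors of $M$ generate a saturated prime ideal, so there are no extra equations in higher degree). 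Since the three $2\times 2$ minors are quadrics, this shows $R$ is scheme-theoretically, indeed ideal-theoretically, cut out by quadrics. A clean alternative, avoiding determinantal generalities, is to compute $H^0(\PP^5,\II_R\otimes\OO(2))$ directly from the exact sequence $0\to\II_R(2)\to\OO_{\PP^5}(2)\to\OO_R(2)\to 0$: one has $h^0(\OO_{\PP^5}(2))=21$, and using $R\cong\FFF_n$ with the embedding line bundle one computes $h^0(\OO_R(2))=18$ and $h^1(\II_R(2))=0$, forcing $h^0(\II_R(2))=3$; checking that these three quadrics already cut $R$ out (no basepoints off $R$, correct degree $4$) finishes the argument.

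\textbf{Main obstacle.} The genuinely substantive point is not the dimension count but the \emph{ideal-theoretic} statement — that the quadrics generate the full saturated ideal rather than merely cutting out $R$ set-theoretically or scheme-theoretically. For the application in this paper, however, only the weaker conclusion (the quadrics cut out $R$, so that for any point $x\notin R$ some quadric through $R$ avoids $x$, i.e.\ the global sections of $\II_R(2)$ generate $\II_R$ away from $R$) is needed to feed into Proposition~\ref{prop:ProjectiveBundleArgument}. I would therefore emphasize the global generation of $\II_R(2)$ outside $R$, which follows immediately once one knows the minors of $M$ have common zero locus exactly $R$. Given the classical nature of the statement, the intended proof is simply a citation to the standard reference on rational normal scrolls together with the determinantal presentation, and the forward obstacle is purely the bookkeeping of identifying the three minors with a spanning set of $H^0(\II_R(2))$, which is routine.
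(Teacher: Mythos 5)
Your strategy (exhibit the scroll as a determinantal variety and take the $2\times 2$ minors of a matrix of linear forms) is exactly the paper's, but the matrix you write down is the wrong size, and this breaks the argument. A quartic scroll in $\PP^5$ is a \emph{surface}, hence has codimension $3$, while the rank-$\le 1$ locus of a generic $2\times 3$ matrix of linear forms has codimension $(2-1)(3-1)=2$: your matrix $M$ cuts out the Segre threefold $\PP^1\times\PP^2\subset\PP^5$, a cubic threefold, not a quartic surface. The correct presentation is by a $2\times 4$ scroll matrix, e.g.
\begin{equation*}
\begin{pmatrix} u & v & x & y\\ v & w & y & z \end{pmatrix},
\end{equation*}
whose rank-$\le 1$ locus is the join of the two conics $[s^2:st:t^2:0:0:0]$ and $[0:0:0:s^2:st:t^2]$, i.e.\ the scroll $S(2,2)$; this gives \emph{six} quadric minors, not three. (For the other type $S(1,3)$ one uses the analogous $2\times 4$ matrix built from a line and a twisted cubic.)

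The cohomological fallback inherits the same numerical error. For $R\cong\FFF_0$ embedded by $H=C_0+2f$ (or $\FFF_2$ with $H=C_0+3f$) one has $h^0(\OO_R(2H))=15$, not $18$, so $h^0(\II_R(2))=21-15=6$, not $3$ — consistent with the six minors. Three quadrics could never cut out $R$ set-theoretically anyway: their common zero locus has dimension $\ge 2$ at every point and, if of pure dimension $2$, has degree $8$, so it cannot equal a reduced degree-$4$ surface. Your final observation is correct and worth keeping — for feeding into Proposition~\ref{prop:ProjectiveBundleArgument} one only needs that the quadrics through $R$ have common zero locus exactly $R$, not the full ideal-theoretic statement — but you need the $2\times 4$ matrix and its six minors to get there.
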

\begin{proof}
	In fact, a quartic scroll in $\mathbb{P}^5$ can be defined by the $2\times 2$ minors of the matrix 
	\begin{equation}
		\begin{pmatrix}
			u & v & x & y\\
			v & w & y & z
		\end{pmatrix}
	\end{equation}
where $[u: v: w: x: y: z]$ are the homogeneous oordinates of $\mathbb{P}^5$.
\end{proof}

\begin{proof}[Proof of Theorem \ref{thm:main-2} for $d=14$]
	Consider
	\begin{align*}
		B&:=\left\{X\subset \PP^5 ~|~ X \text{ is a Pfaffian cubic fourfold}\right\};\\
		T&:=\left\{R\subset \PP^5~|~ R \text{ is a quartic scroll} \right\};\\
		P&:=\left\{R\subset X\subset \PP^5~|~ X \text{ is a cubic fourfold}, R\in T \right\},
	\end{align*}
together with natural morphisms $p\colon P\to B$ and $q\colon P\to T$, which are surjective by Theorem \ref{thm:PfaffCubic} $(i)$. Since $B\to \mathcal{C}_{14}$ is dominant, it suffices to show the Franchetta property for the universal family of cubic fourfolds $\mathcal{X}\to B$. 

Similarly to Proposition \ref{prop:ReduceToRuling}, we first show that 
\begin{equation}
	\label{eqn:PfaffianToRuling}
	\GDCH^3_B(X_b)\subset \bigcap_{t\in q(p^{-1}(b))} \left(\QQ h^3+ \QQ\ell_t\right),
\end{equation}
where $\ell_t\in \CH^3(X_b)$ of  the class of a ruling of the scroll $R_t$, for any $t\in T$. Indeed, Lemma \ref{lemma:LinkFanoScroll} yields that 
for any $b\in B$, 
\[\GDCH^3_B(X_b)\subset \bigcap_{t\in q(p^{-1}(b))} \GDCH^3_{q^{-1}(t)}(X_b);\] 
while for any $t\in q(p^{-1}(b))$,  Lemma \ref{lemma:QuarticScrollEquation} allows us to apply Proposition \ref{prop:ProjectiveBundleArgument} to obtain that 
\[	\GDCH^3_{q^{-1}(t)}(X_b)= \ima\left(\CH^3(\PP^5)\to \CH^3(X_b)\right)  + \ima\left( \CH^1(R_t)\to \CH^3(X_b)\right).\]
On the right-hand side, the first term gives $\mathbb{Q}h^3$, and the second term is generated by the push-forward of $h|_{R_t}$ and $\ell_t$, since $R_t$ is a rational ruled surface. A similar computation as in Proposition \ref{prop:ReduceToRuling} yields that the push-forward of
$h|_{R_t}$ is $\frac{4}{3}h^3$ in $\CH_1(X_b)$. The equality \eqref{eqn:PfaffianToRuling} is proved.

It remains to show that for any Pfaffian cubic fourfold $X$, there exists a quartic scroll $R\subset X$, such that the class of the ruling $\ell$ of $R$, viewed in $\CH^3(X)$, is proportional to $h^3$. The argument is as in the proof in Section \ref{sec:g=14} for the $d=26$ case of Theorem \ref{thm:main-2}. Let $S$ be the associated K3 surface. Choose a (sufficiently generic) constant cycle curve $C$ in $S$, then the rulings of the scrolls parametrized by $t\in C$ (see Theorem \ref{thm:PfaffCubic} $(ii)$) all represent the same classes $L\in \CH_0(F(X))$ and $\ell\in \CH^3(X)$.

The constant cycle curve $C$ gives rise to a constant cycle surface in $S^{[2]}$:
\[  W:= \{z\in S^{[2]}~|~ \operatorname{supp}(z)=\{t\}, t\in C\}.\]
Using the isomorphism $\varphi\colon S^{[2]}\simeq F(X)$ (Theorem \ref{thm:PfaffCubic} $(ii)$), we obtain a constant cycle surface in $F(X)$. One can check from the explicit construction of the isomorphism $ \varphi$
 given in \cite[Proposition 5]{BeauvilleDonagi} that the rational curve in $F(X)$ corresponding to the family of rulings of the scroll $R_t$ parameterized by $t\in S$, is exactly the image under $\varphi$ of $\rho^{-1}(t)\simeq \mathbb{P}^1$, where $t$ is viewed as a point in $\Delta_S\subset S^{(2)}$ and $\rho\colon S^{[2]}\to S^{(2)}$ is the Hilbert--Chow morphism.
It follows that for any point $w\in W$, the 0-cycle class $w\in \CH_0(S^{[2]})$ (which does not depend on $w$ as $W$ is a constant cycle surface) maps via $\varphi$ to $L\in \CH_0(F(X))$. Therefore, the class $L$ has 2-dimensional rational orbit, hence must be a multiple of $g^4$ by Voisin \cite[Proposition 4.5]{Voisin-Coisotropic}. By \cite[Lemma A.4]{SV}, we conclude that $\ell=P_*(L)$ is a multiple of $h^3$, as desired.	
\end{proof}

\begin{corollary}
Conjecture \ref{conj:GFC} holds for $g=8$.
\end{corollary}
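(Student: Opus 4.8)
The plan is to deduce this corollary directly from the two main technical results already in place. By Theorem \ref{thm:GFCK3andCubic}, for any admissible $d$ satisfying $(**)$ with $g = \frac{d}{2}+1$, the Franchetta property for codimension-3 cycles on the universal family $\mathcal{C}^{\mar}_{d,1} \to \mathcal{C}^{\mar}_d$ is equivalent to the Franchetta property for codimension-2 cycles on $\mathcal{F}_{g,1} \to \mathcal{F}_g$, which is precisely Conjecture \ref{conj:GFC} for genus $g$. The case at hand is $d = 14$, which gives $g = \frac{14}{2}+1 = 8$. So the first step is to verify that $d = 14$ is admissible: indeed $14 > 6$ and $14 \equiv 2 \pmod 6$, so it satisfies $(*)$, and $d/2 = 7$ is prime with $7 \equiv 1 \pmod 3$, so it is not divisible by $9$ nor by any prime $\equiv -1 \pmod 3$; hence $(**)$ holds.

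The second step is to invoke the just-proved case of Theorem \ref{thm:main-2} for $d = 14$, which establishes the Franchetta property \eqref{eq:FranchettaCubic} for the universal family over $\mathcal{C}_{14}$. Since $14 \equiv 2 \pmod 6$, the normalization map $\mathcal{C}^{\mar}_{14} \to \mathcal{C}_{14}$ does not affect the Franchetta property (as noted in the remark following Theorem \ref{thm:GFCK3andCubic}), so the Franchetta property for $\mathcal{C}_{14,1} \to \mathcal{C}_{14}$ transfers to $\mathcal{C}^{\mar}_{14,1} \to \mathcal{C}^{\mar}_{14}$. Applying the equivalence of Theorem \ref{thm:GFCK3andCubic} then yields the Franchetta property for codimension-2 cycles on $\mathcal{F}_{8,1} \to \mathcal{F}_8$, which is exactly the statement of Conjecture \ref{conj:GFC} for $g = 8$.

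There is no real obstacle here, since the corollary is a formal consequence of the preceding results: the whole content lies in the proof of Theorem \ref{thm:main-2} for $d = 14$ (the quartic-scroll argument and the constant-cycle-curve reduction to Voisin's orbit filtration) together with the bridge provided by Theorem \ref{thm:GFCK3andCubic}. The only point requiring a moment's care is the bookkeeping on the congruence class of $d$ modulo $6$, to confirm that the marked-to-unmarked comparison is a normalization rather than a degree-$2$ cover, so that the equivalence passes cleanly in both directions. I would therefore write the proof as a single short paragraph chaining these two citations, explicitly recording the numerics $d = 14$, $g = 8$, $d \equiv 2 \pmod 6$.

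\begin{proof}
Since $d = 14$ satisfies $(**)$ and $g = \frac{14}{2}+1 = 8$, Theorem \ref{thm:GFCK3andCubic} shows that Conjecture \ref{conj:GFC} for $g = 8$ is equivalent to the Franchetta property for $\mathcal{C}^{\mar}_{14,1} \to \mathcal{C}^{\mar}_{14}$. As $14 \equiv 2 \pmod 6$, the map $\mathcal{C}^{\mar}_{14} \to \mathcal{C}_{14}$ is a normalization and hence does not affect the Franchetta property. The latter has just been established in the proof of Theorem \ref{thm:main-2} for $d = 14$, so Conjecture \ref{conj:GFC} holds for $g = 8$.
\end{proof}
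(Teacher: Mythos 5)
Your proposal is correct and follows essentially the same route as the paper, which simply invokes Theorem \ref{thm:main-3} together with the just-proved Franchetta property for $\mathcal{C}_{14}$. The extra bookkeeping you supply (verifying that $14$ is admissible and that $14\equiv 2\pmod 6$, so that $\mathcal{C}^{\mar}_{14}\to\mathcal{C}_{14}$ is a normalization) is exactly the content packaged inside Theorem \ref{thm:main-3} via Theorem \ref{thm:GFCK3andCubic}, so nothing is genuinely different.
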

\begin{proof}
	By Theorem \ref{thm:main-3}, it follows from the Franchetta property for special cubic fourfolds in $\mathcal{C}_{14}$, which has just been proved. 
	
	(We remark that instead of appealing to the general result Theorem \ref{thm:main-3}, the second author has established in \cite[Corollary 4.4]{Laterveer-PfaffianGrassmannian} directly the link between the $\CH_1$ of a Pfaffian cubic fourfold and the $\CH_0$ of the associated K3 surface, which is generically defined. This avoids the use of techniques from derived categories.)
\end{proof}

\bibliographystyle{amsalpha}
\bibliography{bib}

\end{document}